\documentclass[11pt, a4paper, oneside, reqno]{amsart}

\usepackage[utf8]{inputenc}
\usepackage{amsmath, amsthm, amssymb, amsfonts}
\usepackage{bbm}
\usepackage[foot]{amsaddr}
\usepackage{booktabs}
\usepackage[shortlabels]{enumitem}
\usepackage[colorlinks=true, linkcolor = blue, citecolor = dgreen]{hyperref}
\usepackage{cleveref}
\usepackage[numbers]{natbib}
\usepackage{mathrsfs}
\usepackage[dvipsnames]{xcolor}
\usepackage{tikz}
\usepackage{geometry}
\usepackage{setspace}
\usepackage[normalem]{ulem}
\usepackage{newpxtext}
\usepackage{thmtools}
\setlength{\parindent}{0.75cm}
\setlength{\parskip}{1em}

\setlist{leftmargin=*, topsep=0.5em, parsep=0pt, itemsep=1em, labelindent=0pt, align=left}

\geometry{top=1in, bottom=1in, left=1in, right=1in}

\onehalfspacing

\definecolor{red}{HTML}{D62728}
\definecolor{blue}{RGB}{ 0, 109, 219}
\definecolor{dgreen}{rgb}{0,.8,0}

\crefname{enumi}{}{}
\Crefname{enumi}{}{}

\crefformat{enumi}{(#2#1#3)}
\Crefformat{enumi}{(#2#1#3)}
\crefrangeformat{enumi}{(#3#1#4) to~(#5#2#6)}
\Crefrangeformat{enumi}{(#3#1#4) to~(#5#2#6)}


\makeatletter
\newcommand{\ie}{\emph{i.e.}\@ifnextchar.{\!\@gobble}{}}
\newcommand{\eg}{\emph{e.g.}\@ifnextchar.{\!\@gobble}{}}
\newcommand{\etc}{etc\@ifnextchar.{}{.\@}}
\makeatother

\newtheorem*{theorem*}{Theorem}
\newtheorem{thm}{Theorem}[section]
\newtheorem{dfn}[thm]{Definition}
\newtheorem{lemma}[thm]{Lemma}
\newtheorem{prop}[thm]{Proposition}

\newtheorem{remark}[thm]{Remark}

\numberwithin{equation}{section}

\crefname{thm}{theorem}{theorems}
\Crefname{thm}{Theorem}{Theorems}
\crefname{lemma}{lemma}{lemmas}
\Crefname{lemma}{Lemma}{Lemmas}
\crefname{prop}{proposition}{propositions}
\Crefname{prop}{Proposition}{Propositions}
\crefname{corollary}{corollary}{corollaries}
\Crefname{corollary}{Corollary}{Corollaries}
\crefname{dfn}{definition}{definitions}
\Crefname{dfn}{Definition}{Definitions}
\crefname{remark}{remark}{remarks}
\Crefname{remark}{Remark}{Remarks}
\crefname{example}{example}{examples}
\Crefname{example}{Example}{Examples}
\crefname{conjecture}{conjecture}{conjectures}
\Crefname{conjecture}{Conjecture}{Conjectures}
\crefname{hypothesis}{hypothesis}{hypotheses}
\Crefname{hypothesis}{Hypothesis}{Hypotheses}
\crefname{problem}{problem}{problems}
\Crefname{problem}{Problem}{Problems}


\def\E{{\mathbb E}} 

\def \R {{\mathbb{R}}}

\newcommand{\independent}{\perp\!\!\!\perp}

\newcommand{\sgn}{\text{sgn}}

\AddToHook{cmd/appendix/before}{%
  \crefalias{chapter}{appendix}%
  \crefalias{section}{appendix}%
  \crefalias{subsection}{appendix}%
}


\begin{document}
\title[]{Exponential integrability and limiting behavior of the derivative of intersection and self-intersection local time of fractional Brownian motion}

\author{Kaustav Das$^{\dagger \ddagger}$}
\author{Gregory Markowsky$^{\dagger}$}
\author{Binghao Wu$^{\dagger}$}
\author{Qian Yu$^{*}$}
\address{$^\dagger$School of Mathematics, Monash University, Victoria, 3800 Australia.}
\address{$^\ddagger$Centre for Quantitative Finance and Investment Strategies, Monash University, Victoria, 3800 Australia.}
\address{$^*$School of Mathematics, Nanjing University of Aeronautics and Astronautics,
Nanjing 211106, P. R. China.}
\email{kaustav.das@monash.edu, greg.markowsky@monash.edu, binghao.wu@monash.edu, qyumath@163.com}
\date{}

\begin{abstract}
We give the correct condition for existence of the $k$-th derivative of the intersection local time for fractional Brownian motion,  which was originally discussed in [Guo, J., Hu, Y., and Xiao, Y., Higher-order derivative of intersection local time for two independent fractional Brownian motions, Journal of Theoretical Probability 32, (2019), pp. 1190-1201]. We also show that the $k$-th derivative of the intersection and self-intersection local times of fractional Brownian motion are exponentially integrable for certain parameter values. In addition, we show convergence in distribution when the existence condition is violated for the $k$-th derivative of self-intersection local time of fractional Brownian motion under scaling. 
\end{abstract}
\maketitle
\vspace{.4cm}
\noindent Keywords: Brownian motion; local time; self-intersection local time; derivatives of self-intersection local time; fractional Brownian motion; exponential integrability, Malliavin calculus, Central limit theorem.
\section{Introduction and main results}
\label{sec:introduction}
\noindent

Let $B_t$ be a Brownian motion for the time being, and consider the following functional introduced in \citep{rogers1991intrinsic, rogers1991local, rogers1991t},
\begin{align*}
    A(T,B_{T})=\int_{0}^{T}1_{[0,\infty)}(B_T-B_s)ds.
\end{align*}
A formal application of It\^o's formula, using $\frac{d}{dx}1_{[0,\infty)}(x) = \delta(x), \frac{d^2}{dx^2}1_{[0,\infty)}(x) = \delta'(x)$ with $\delta$ the Dirac delta function, leads to a Tanaka-style formula containing the following expression:
\begin{align} 
	\int_{0}^T\int_{0}^t\delta'(B_{t}-B_{s})dsdt, \label{eqn:DSLTBM}
\end{align}
This motivated the influential work \citep{rosen2005derivatives}, where existence of this process, known as the derivative of self-intersection local time (DSLT) of Brownian motion, was rigorously proved, and a number of properties of the process provided. The corresponding Tanaka formula was also stated as a formal identity in that paper, although later (\citep{markowsky2008proof}) the following slightly different formula was rigorously proved:

\begin{align*}
    \frac{1}{2}\int_{0}^T\int_{0}^t\delta'(B_{t}-B_{s})dsdt + \frac{1}{2} \sgn(x)T =
    \int_0^T L_s^{B_s - x} dB_s - \int_{0}^{T} \sgn(B_{T}-B_{s} - x) ds. 
\end{align*} 

Since that time a lengthy sequence of papers devoted to DSLT by many authors have followed, many of which have focused on the same expression for processes other than Brownian motion. We will continue that study in this paper, with our interest being DSLT of fractional Brownian motion (fBm).

In what follows, $B^{H}$ will denote a one-dimensional fBm with Hurst parameter $H$. The DSLT of fBm was first introduced by Yan, Yang, and Lu in \citep{yan2008p}; however, as was noted in that paper, there are two natural versions of the DSLT of fBm. The first version is derived from the Tanaka formula, and was justified by Jung and Markowsky \citep{jung2014tanaka}. They showed that when the Hurst parameter $0<H<\tfrac{2}{3}$, the DSLT of fractional Brownian motion
\[
    -H\int_{0}^{t}\int_{0}^{s} \delta^{\prime}(B_{s}^{H}-B_{r}^{H})(s-r)^{2H-1} \, dr \, ds
\]
exists in $L^{p}(\Omega)$, where $B^{H}$ denotes a one-dimensional fractional Brownian motion. Note that the kernel $(s-r)^{2H-1}$ is present due to the form of Ito's formula for fBm.

The second version is derived from the occupation time formula and was also proven to exist under the same condition on the Hurst parameter by Jung and Markowsky \citep{jung2015holder}. Specifically, when $0<H<\tfrac{2}{3}$,
\[
    \int_{0}^{t}\int_{0}^{s} \delta^{\prime}(B_{s}^{H}-B_{r}^{H}) \, dr \, ds
\]
exists in $L^{p}(\Omega)$. In this article, we will work only with this second version, i.e. without the kernel $(s-r)^{2H-1}$.

Inspired by the work above, Yu \citep{yu2021higher} showed that for $d$-dimensional fractional Brownian motion $B^{H}$, the $k$-th order DSLT
\[
    (-1)^{|k|}\int_{0}^{t}\int_{0}^{s}\delta^{(k)}(B^{H}_{s}-B^{H}_{r}) \, dr \, ds
\]
exists in $L^{2}(\Omega)$ when
\[
    H < \min\!\left(\tfrac{2}{2|k|+d}, \; \tfrac{1}{|k|+d-\#}, \; \tfrac{1}{d}\right),
\]
and exists in $L^{p}(\Omega)$ when
\[
    H|k| + Hd < 1,
\]
where $k=(k_{1},\dots,k_{d}) \in \mathbb{N}^{d}$, $|k|=\sum_{j=1}^{d}k_{j}$, and $\#$ denotes the number of odd $k_{i}$ in $k$.  For convenience, we neglect the constant term and denote the following as the DSLT of fractional Brownian motion:
\begin{align}
\label{DSLT of fbm}
    \hat{\alpha}_{t}^{(k)} := \int_{D} \delta^{(k)}(B_{s}^{H}-B^{H}_{r}) \, dr \, ds
    = \lim_{\epsilon \to 0} \hat{\alpha}^{(k)}_{t,\epsilon}
    := \lim_{\epsilon \to 0} \int_{D} \delta^{(k)}_{\epsilon}(B_{s}^{H}-B_{r}^{H}) \, dr \, ds ,
\end{align}
where $D=\{(r,s) \,|\, 0<r<s<t\}$. Following Yu's work, a number of subsequent papers have studied these processes more closely; see \cite{kuang2022derivative,das2022existence,yu2020asymptotic,guo2024derivative,yu2023smoothness,hong2025exact,yu2024limit}.

Another focus of this paper is the derivative of intersection local time (DILT) of fractional Brownian motion, which is formally defined as
\begin{align}
\label{DILT of fbm}
    \alpha_{t}^{(k)} := \int_{0}^{t}\int_{0}^{t} \delta^{(k)}(B^{H}_{s}-\hat{B}^{H}_{r}) \, dr \, ds,
\end{align}
where $B^{H}$ and $\hat{B}^{H}$ are two independent $d$-dimensional fractional Brownian motions with the same Hurst parameter $H$. Recall that a $d$-dimensional fractional Brownian motion with Hurst parameter $H \in (0,1)$, denoted by $B^H$, is a $d$-dimensional centered Gaussian process, continuous a.s., whose $d$ components are independent copies of a one-dimensional fractional Brownian motion $B^{H,j}$, $j \in \{1,\dots,d\}$, with covariance function
\[
    \mathbb{E}[B^{H,j}_{t} B^{H,j}_{s}] = \tfrac{1}{2} \big( t^{2H} + s^{2H} - |t-s|^{2H} \big).
\]
Note that when $H=\tfrac{1}{2}$, fractional Brownian motion reduces to standard Brownian motion. Other than this case, the increments of a fractional Brownian motion are not independent. Naturally, in order to rigorously define $\alpha$ and $\hat \alpha$, one must begin with an approximate $\delta$ function and then show convergence to a well defined process. To be precise, we let
\begin{align*}
    \delta_{\epsilon}(x) :=\frac{1}{(2\pi \epsilon)^{\frac{d}{2}}}e^{-\frac{|x|^{2}}{2\epsilon}}
\end{align*}
be our approximate $\delta$ function, and it can be shown that $\delta_\epsilon$ converges weakly to $\delta$ as $\epsilon \to 0$. We then utilise the representation of $\delta_{\epsilon}$ through the Fourier transform
\begin{align*}
	\delta_{\epsilon}(x) &= \frac{1}{(2\pi)^{d}} \int_{\R^{d}}e^{i\langle p,x\rangle}e^{-\frac{\epsilon|p|^{2}}{2}}dp, \\
    \delta^{(k)}_{\epsilon}(x)=& \frac{i^{|k|}}{(2\pi)^{d}}\int_{\R^{d}}\prod_{j=1}^{d}p^{k_{j}}_{j}e^{i\langle p,x\rangle}e^{-\frac{\epsilon|p|^{2}}{2}}dp,
\end{align*} 
where $\langle \cdot , \cdot \rangle$ denotes the $d$-dimensional Euclidean inner product. For simplicity, we focus on $t=1$, and denote
\begin{align}
\label{limit of DILT}
    \alpha^{(k)} := \int_{0}^{1}\int_{0}^{1} \delta^{(k)}(B^{H}_{s}-\hat{B}^{H}_{r}) \, dr \, ds,
\end{align}
and define the mollified version
\begin{align}
\label{molifiled version}
    \alpha_{\epsilon}^{(k)} &:= \int_{0}^{1}\int_{0}^{1} \delta_{\epsilon}^{(k)}(B^{H}_{s}-\hat{B}^{H}_{r}) \, dr \, ds,\nonumber\\
    &=\frac{i^{|k|}}{(2\pi)^{d}}\int_{0}^{1}\int_{0}^{1}\int_{\R^{d}}\prod_{j=1}^{d}p^{k_{j}}_{j}e^{ip(B_{s}^{H}-\hat{B}_{r}^{H})}e^{\frac{-\epsilon|p|^{2}}{2}}dpdrds,
\end{align}
where other cases can be obtained by scaling.
The existence of $\alpha^{(k)}$ in $L^{p}(\Omega)$ was discussed in \citep{guo2019higher}; however, unfortunately, an error was noted in their proof, and a counterexample to their result was discussed in \citep{das2025exponential}. Our first order of business will therefore be to give the correct range of existence for the process, which we do in our first result.

 \begin{thm}
\label{theorem of existence of DILT of fbm}
Let $k=(k_{1},\dots,k_{d}) \in \mathbb{N}^{d}$ and $|k|=k_{1}+\cdots+k_{d}$. Suppose that $2|k|H+Hd<2$. Then $\alpha_{\epsilon}^{(k)}$ defined in \cref{molifiled version} converges in $L^{n}(\Omega)$ as $\epsilon\to 0$ for all $n \geq 1$. 
\end{thm}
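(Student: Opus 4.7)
The plan is to prove that $\{\alpha_\epsilon^{(k)}\}_{\epsilon>0}$ forms a Cauchy sequence in $L^n(\Omega)$ for every $n \geq 1$; it suffices to verify this for even $n$, since $L^n$-Cauchy-ness is monotone in $n$. Starting from the Fourier representation in \eqref{molifiled version}, one writes
\begin{align*}
\alpha_\epsilon^{(k)} - \alpha_{\epsilon'}^{(k)} = \frac{i^{|k|}}{(2\pi)^d} \int_0^1 \int_0^1 \int_{\R^d} \prod_j p_j^{k_j} e^{i \langle p, B^H_s - \hat B^H_r\rangle} \bigl[ e^{-\epsilon |p|^2/2} - e^{-\epsilon' |p|^2/2} \bigr] dp \, dr \, ds,
\end{align*}
and then expands $\E[(\alpha_\epsilon^{(k)} - \alpha_{\epsilon'}^{(k)})^n]$ by Fubini into an $(nd+2n)$-fold integral over $n$ frequency vectors $p_l \in \R^d$ and $2n$ time variables $(s_l, r_l) \in [0,1]^2$, with the difference of exponentials appearing as a product over $l$.

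The key reduction is a Gaussian computation. By the independence of $B^H$ and $\hat B^H$, together with the independence of their coordinate components, the joint characteristic function factorises as
\begin{align*}
\E\bigl[ e^{i\sum_l \langle p_l, B^H_{s_l} - \hat B^H_{r_l}\rangle} \bigr] = \prod_{j=1}^d \exp\Bigl(-\tfrac12 p_{\cdot,j}^T (\Sigma^s + \Sigma^r) p_{\cdot,j}\Bigr),
\end{align*}
where $p_{\cdot,j}=(p_{1,j},\ldots,p_{n,j})^T$ and $\Sigma^s_{lm} = \E[B^{H,1}_{s_l} B^{H,1}_{s_m}]$, $\Sigma^r_{lm} = \E[\hat B^{H,1}_{r_l} \hat B^{H,1}_{r_m}]$. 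Combined with the mollifier, the effective quadratic form uses $Q_\epsilon := \Sigma^s + \Sigma^r + \epsilon I$. Performing the Gaussian integral in $p$ coordinatewise and applying Wick's theorem (Isserlis) to the polynomial weights $p_{l,j}^{k_j}$ produces a factor $(\det Q_\epsilon)^{-d/2}$ multiplied by a polynomial in the entries of $Q_\epsilon^{-1}$ whose degree equals $nk_j$ in each coordinate direction. A standard Cauchy--Schwarz/moment argument controls this polynomial by $\prod_l (Q_\epsilon^{-1})_{ll}^{|k|/2}$ times combinatorial constants.

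The main estimate then uses the local nondeterminism (LND) property of fBm. After ordering $s_{(1)}<\ldots<s_{(n)}$ and $r_{(1)}<\ldots<r_{(n)}$, LND gives $\det \Sigma^s \geq c_n \prod_l |s_{(l)}-s_{(l-1)}|^{2H}$ and likewise for $\Sigma^r$. The additive structure of $Q_\epsilon = \Sigma^s + \Sigma^r + \epsilon I$ is precisely what distinguishes the DILT from the DSLT setting, since Minkowski's determinant inequality yields $\det Q_\epsilon \geq \det \Sigma^s + \det \Sigma^r$, avoiding the diagonal degeneracies present when only a single fBm is involved. Similarly, using $(Q_\epsilon^{-1})_{ll} = 1/\Var(W_l \mid W_{\hat l})$ for $W \sim N(0,Q_\epsilon)$ together with conditioning on the individual fBm variables yields
\begin{align*}
(Q_\epsilon^{-1})_{ll} \leq \frac{C}{\min_{m\neq l} |s_l - s_m|^{2H} + \min_{m\neq l} |r_l - r_m|^{2H}}.
\end{align*}
The problem then reduces to bounding
\begin{align*}
\int_{[0,1]^{2n}} (\det Q_\epsilon)^{-d/2} \prod_l (Q_\epsilon^{-1})_{ll}^{|k|/2} \, dr \, ds,
\end{align*}
which, after symmetrisation over the orderings and an AM--GM reduction, is finite uniformly in $\epsilon$ precisely under the condition $2|k|H + Hd < 2$.

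The main obstacle is the sharp matching between the $|p|^{n|k|}$-weight produced by the derivatives $\delta^{(k)}$ and the LND-based lower bounds on $\det Q_\epsilon$ and the conditional variances: the Wick polynomial cannot simply be absorbed into powers of $(\det Q_\epsilon)^{-1}$, or one would lose the gain from having two independent fBms, and the correct exponent match is what produces the precise threshold $2|k|H+Hd<2$ rather than the weaker $|k|H+Hd<2$ suggested by scaling alone. An additional nuisance is the unequal distribution of $k_j$'s across coordinates, which introduces combinatorial cases that must be absorbed uniformly since the final exponent depends only on $|k|$. Once the uniform $L^n$ bound is established, Cauchy convergence follows by dominated convergence: the integrand tends to $0$ pointwise as $\epsilon,\epsilon'\to 0$, and is dominated by the $\epsilon$-free version of the integral above.
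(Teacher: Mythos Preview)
Your approach is correct and follows essentially the same architecture as the paper's proof: expand the even moments via the Fourier representation, reduce to the Gaussian integral $\int_{[0,1]^{2n}}\int_{\R^{nd}} e^{-\frac{1}{2}\sum_j \xi_j^{\intercal}A\xi_j}\prod_{l,j}|\xi_{lj}|^{k_j}\,d\xi\,ds\,dr$ with $A=\Sigma^s+\Sigma^r$, bound via local nondeterminism, and conclude by dominated convergence. The paper's convergence criterion (showing that all mixed moments $\E[(\alpha_{\epsilon_1}^{(k)})^q(\alpha_{\epsilon_2}^{(k)})^{n-q}]$ converge to the same limit) is equivalent to your direct Cauchy estimate.

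The one substantive difference is in how the determinant lower bound is obtained. You invoke Minkowski's determinant inequality for $\Sigma^s+\Sigma^r$, which (after AM--GM on the $n$-th roots) gives $\det A\geq 2^n(\det\Sigma^s\det\Sigma^r)^{1/2}$ in one stroke. The paper instead writes $\det A$ as a product of conditional variances of $B^{H}_{s_l}-\hat B^{H}_{r_l}$, splits each factor using the independence of $B^H$ and $\hat B^H$ together with the monotonicity of conditional variance under enlarging the conditioning $\sigma$-algebra, and then applies AM--GM termwise; this yields the same product bound. Your route is shorter; the paper's route is perhaps more transparent about where the independence of the two fBms enters. A minor caution on the Wick step: bounding the Isserlis pairing sum by $\prod_l (Q_\epsilon^{-1})_{ll}^{|k|/2}$ relies on the PSD inequality $|M_{ij}|\leq\sqrt{M_{ii}M_{jj}}$ applied to $Q_\epsilon^{-1}$, and requires some care with the coordinate-wise parity of $k_j$. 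The paper avoids this by first bounding $\prod_j|\xi_{lj}|^{k_j}\leq|\xi_l|^{|k|}$ and then changing variables $u_j=A^{1/2}\xi_j$, which separates the Gaussian moment from the time integral cleanly.
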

\begin{remark}

We denote its limit by $\alpha^{(k)}$ which is defined in \cref{limit of DILT}. When $|k|=0$, the existence condition reduces to $Hd<2$, which coincides with the critical existence condition in \citep{nualart2007intersection}.
\end{remark}

Having established existence, we turn to the question of exponential integrability. We will say that a random variable $X$ is {\it exponentially integrable} of order $\beta$ if there exists a constant $M > 0$ such that $\E[\exp \{ M|X|^{\beta} \}]<\infty$. Exponential integrability with $\beta=1$ is equivalent to the existence of the moment generating function $M_X(t) := \E[e^{tX}]$ for $t$ in a neighborhood of $0$, and can also be used to give strong tail estimates on the distribution of $X$. Exponential integrability of various flavors of intersection local time has been an important concept in mathematical physics, particular in relation to models of self attracting or self avoiding Brownian motion and polymer measures. More details can be found in the influential works \citep{bass2004self,konig2006brownian,le1994exponential}. In \cite{das2025exponential}, exponential integrability was provided for DILT and DSLT of Brownian motion and $\alpha$-stable processes, and we will extend these results to fBm, as follows.

\begin{thm}
\label{theorem of EI of DILT of fbm}
Let $k=(k_{1},\dots,k_{d}) \in \mathbb{N}^{d}$ and $|k|=k_{1}+\cdots+k_{d}$. Suppose that
\[
    2|k|H+Hd<2 
    \quad \text{and} \quad 
    \beta<\frac{1}{|k|+|k|H+dH}.
\]
Then there exists a constant $M>0$ such that
    \[
        \mathbb{E}\!\left[e^{M(\alpha^{(k)})^{\beta}}\right]<\infty.
    \]
\end{thm}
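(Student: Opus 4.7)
The plan is to reduce the claim to a moment bound of the form $\E[(\alpha^{(k)})^n] \le C^n (n!)^{\gamma}$ with $\gamma = |k|+|k|H+dH$, and then convert this into exponential integrability of order $\beta < 1/\gamma$ via the standard Taylor-series argument: expanding $\E[\exp(M|\alpha^{(k)}|^\beta)] = \sum_{n \ge 0} M^n \E[|\alpha^{(k)}|^{n\beta}]/n!$, interpolating non-integer moments by Lyapunov's inequality, and applying Stirling's formula gives term-by-term convergence for $M$ sufficiently small precisely when $\beta\gamma<1$, matching the hypothesis $\beta < 1/(|k|+|k|H+dH)$.

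For the moment estimate I would begin with the Fourier representation in \cref{molifiled version}. Expanding the $n$-fold product and using independence of $B^H$ and $\hat B^H$, the moment $\E[(\alpha_\epsilon^{(k)})^n]$ becomes a $2n$-fold time integral of a Gaussian-polynomial integral in the momentum variables $p \in \R^{nd}$. Symmetrize the two time integrations into $(n!)^2$ ordered simplices $0<s_1<\cdots<s_n<1$ and $0<r_1<\cdots<r_n<1$, and on each simplex invoke Berman's strong local nondeterminism property separately for $B^H$ and for $\hat B^H$. This bounds the two variances from below by $c\sum_\ell (s_\ell-s_{\ell-1})^{2H}|q_\ell|^2$ and $c\sum_\ell (r_\ell-r_{\ell-1})^{2H}|\tilde q_\ell|^2$, where $q_\ell = \sum_{m \ge \ell} p_m$ in the respective $s$- and $r$-orderings. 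A Cauchy--Schwarz split distributes the polynomial $\prod_{\ell,j} p_{\ell,j}^{k_j}$ between the two Gaussian integrals; after the unit-Jacobian change of variables $p \leftrightarrow q$ and Gaussian integration against the polynomial, one obtains a bound of the form $C^n \prod_\ell (s_\ell-s_{\ell-1})^{-a}(r_\ell-r_{\ell-1})^{-a}$ with $a<1$ guaranteed by the hypothesis $2|k|H+dH<2$. Dirichlet--beta integration over the two simplices yields a factor proportional to $[\Gamma(1-a)^n/\Gamma(n(1-a)+1)]^2$; combined with the $(n!)^2$ symmetrization prefactor and Stirling, this produces the desired $C^n(n!)^{|k|+|k|H+dH}$ bound, uniformly in $\epsilon$. \Cref{theorem of existence of DILT of fbm} then allows passage to the limit $\epsilon \to 0$ while preserving the moment bound.

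The main technical obstacle is the joint Gaussian--polynomial step. The two LND applications produce Gaussian kernels in different partial-sum coordinate systems (coupled via the symmetrization permutation $\tau$ that matches the $s$- and $r$-orderings), while the polynomial factor $\prod p_{\ell,j}^{k_j}$ must be apportioned between them. A Cauchy--Schwarz split combined with elementary bounds such as $|p_\ell|^{k_j}\le C(|q_\ell|^{k_j}+|q_{\ell+1}|^{k_j})$, or equivalently Wick's formula applied to $(\Sigma^B+\Sigma^{\hat B})^{-1}$ with a Minkowski-determinant lower bound on $\det(\Sigma^B+\Sigma^{\hat B})$, has to be calibrated so that the exponents combine to give both (i) convergent Dirichlet integrals, i.e.~$a<1$, and (ii) the rate $\gamma=|k|+|k|H+dH$. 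The $n!$ permutations coupling the two orderings can then be absorbed into the constant $C^n$, since the bound on each ordered simplex is uniform in $\tau$; the remaining passage to the limit and the Taylor--Lyapunov--Stirling argument of the first paragraph complete the proof.
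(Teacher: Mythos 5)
Your overall architecture coincides with the paper's: both proofs reduce the claim to the moment estimate $\mathbb{E}[|\alpha^{(k)}|^{n}]\leq C^{n}(n!)^{\gamma}$ with $\gamma=|k|+|k|H+dH$ (the paper's \Cref{even moment bound}), handle non-even exponents by Jensen/Lyapunov, and sum the series $\sum_n M^n\mathbb{E}[|\alpha^{(k)}|^{\beta n}]/n!$, which converges (in fact for every $M>0$) precisely because $\beta\gamma<1$. Where you diverge is in how the moment bound is extracted, and this is also where your write-up stops short of a proof. The paper changes variables by $A^{1/2}$, uses the cofactor identity $A^{-1}_{ll}=c_{ll}/\det(A)$ together with the conditional-variance factorization of $\det(A)$ (\Cref{lemma:Gaussiandeterminant}, \Cref{Conditional Variance inequality}, \Cref{conditional expectation extra independent sigma}) to split the singularities evenly between the $s$- and $r$-increments, and then dumps the entire polynomial into one global Gaussian moment $\Omega=\int e^{-|u|^2/2}|\sum u_{jm}^2|^{n|k|/2}du\leq C^n(n!)^{|k|}$ --- this $\Omega$ term is the sole source of the $(n!)^{|k|}$ factor in $\gamma$. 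You instead invoke the quadratic-form (increment-coordinate) version of strong local nondeterminism and propose to absorb the polynomial into one-dimensional Gaussian integrals via $|p_\ell|^{k_j}\leq C(|q_\ell|^{k_j}+|q_{\ell+1}|^{k_j})$. That route is legitimate and classical (it is essentially the Hu--Nualart scheme), and if you carry it through each $q_{\ell j}$ receives a power at most $2k_j$, so the Gaussian moments contribute only $C^n$ rather than $(n!)^{|k|}$; the worst increment exponent is $Hd/2+H|k|<1$, which is exactly the hypothesis $2|k|H+Hd<2$, and the two Dirichlet integrals plus the $(n!)^2$ symmetrization then yield $C^n(n!)^{|k|H+dH}$ --- a \emph{stronger} bound than the one you claim to be targeting. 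This discrepancy is the tell that you have not actually performed the calibration: you assert the rate $(n!)^{|k|+|k|H+dH}$ but your own mechanism, as described, does not produce the $(n!)^{|k|}$ factor, and you explicitly leave the apportionment of the polynomial between the two Gaussian factors as something that ``has to be calibrated,'' offering two inequivalent devices (Cauchy--Schwarz with partial-sum bounds versus Wick's formula with a Minkowski determinant bound) without committing to either. Since any bound at most $(n!)^{|k|+|k|H+dH}$ suffices for the theorem, this is not a fatal defect, but the central computation --- the one occupying essentially all of the paper's \Cref{even moment bound} --- is precisely the step you have deferred, so the proposal should be regarded as a correct plan along a genuinely different (and potentially sharper) route rather than a proof.
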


We note that there are also studies of the DILT of two fractional Brownian motions with different Hurst parameters (see \citep{guo2019higher,hong2020derivatives,hong2021derivatives,yan2017derivative}), but in this article we will only consider the case where the two fractional Brownian motions have the same Hurst parameter. The property of local nondeterminism of Gaussian processes, first proposed by Berman \citep{berman1973local}, plays an important role in analyzing the moments of DSLT. Local nondeterminism asserts that one cannot accurately anticipate the value of a Gaussian process at a point, no matter how close the available information is to that point. For the case of fractional Brownian motion, we will make use of the version of local nondeterminism established by Hu and Nualart \citep{hu2008integral}, which states that for $t,s,r \in [0,T]$, there exists a positive constant $\kappa$ depending only on $H$ and $T$ such that
\begin{align}
    \label{local nondeterminism}
    \mathrm{Var}\!\left(B_{t}^{H} \,\middle|\, B_{s}^{H}, |t-s|>r \right) > \kappa r^{2H}.
\end{align}

We will provide a similar result for $\hat{\alpha}$. For simplicity, we focus on $t=1$, and denote
\begin{align*}
    \hat{\alpha}^{(k)} := \int_{0}^{1}\int_{0}^{s} \delta^{(k)}(B^{H}_{s}-B^{H}_{r}) \, dr \, ds,
\end{align*}
since other cases can be obtained by scaling.
\begin{thm} \label{theorem of EI of DSLT of fbm}
    When $H|k|+Hd<1$ and $\beta < \frac{1}{|k|+|k|H+dH}$, there exists a constant $M>0$ such that 
    \begin{align*}
        \mathbb{E}[e^{M(\hat{\alpha}^{(k)})^{\beta}}]<\infty,
    \end{align*}
    where $k=(k_{1},\dots,k_{d})\in \mathbb{N}^{d}$ and $|k|=k_{1}+\cdots+k_{d}$.
\end{thm}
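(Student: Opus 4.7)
The strategy mirrors that of \cref{theorem of EI of DILT of fbm}, but now both time points lie on the same fBm, so the increments $B^H_{s_\ell}-B^H_{r_\ell}$ across $\ell=1,\dots,n$ are correlated rather than independent; local nondeterminism will replace the role played by independence. I would first reduce the exponential estimate to a moment estimate: expanding $e^{Mx^\beta}=\sum_{n\geq 0} M^n x^{n\beta}/n!$ and bounding $x^{n\beta}\leq 1+x^{\lceil n\beta\rceil}$, it suffices to show
\[
    \E\bigl[(\hat{\alpha}^{(k)})^n\bigr] \leq C^n\,(n!)^{|k|+|k|H+dH} \qquad \text{for every } n\in\N.
\]
A routine Stirling computation then shows the corresponding series converges for small enough $M$ precisely because $\beta<1/(|k|+|k|H+dH)$.

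For the moment bound I would work with the mollified version $\hat{\alpha}^{(k)}_\epsilon$ and derive the estimate uniformly in $\epsilon$. Using the Fourier representation of $\delta^{(k)}_\epsilon$ gives
\[
    \E\bigl[(\hat{\alpha}_\epsilon^{(k)})^n\bigr]
    = \frac{i^{n|k|}}{(2\pi)^{nd}}
    \int_{T_n}\!\int_{\R^{nd}}
    \prod_{\ell=1}^{n}\prod_{j=1}^{d} (p^\ell_j)^{k_j}\,
    e^{-\frac{1}{2}\Var\left(\sum_\ell \langle p^\ell, B_{s_\ell}^H - B_{r_\ell}^H\rangle\right)}
    \,d\vec{p}\,d\vec{r}\,d\vec{s},
\]
with $T_n=\{0<r_\ell<s_\ell<1:\ell=1,\dots,n\}$. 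I would then symmetrize over the $n!$ orderings of the pairs and relabel the $2n$ endpoints in increasing order as $0<t_1<\cdots<t_{2n}<1$. Componentwise application of \cref{local nondeterminism} to the ordered gap increments furnishes a lower bound of the form $\kappa\sum_{j=1}^{2n}(t_j-t_{j-1})^{2H}|q_j|^2$, where each $q_j\in\R^d$ is a linear combination of those $p^\ell$ whose intervals $[r_\ell,s_\ell]$ cover the gap $(t_{j-1},t_j)$, and the change of variables $\vec{p}\mapsto\vec{q}$ has unit Jacobian. Integrating out the Gaussian produces, for each gap $j$, a factor of the form $(t_j-t_{j-1})^{-H(d+2\sigma_j)}$ with nonnegative integer exponents $\sigma_j$ satisfying $\sum_j\sigma_j=n|k|$, alongside a combinatorial factor from Wick's formula. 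The condition $H|k|+Hd<1$ forces every gap exponent below one, so the remaining simplex integral is a Dirichlet integral expressible in Gamma functions; Stirling's formula then yields the claimed $(n!)^{|k|+|k|H+dH}$ growth, uniformly in $\epsilon$. Fatou's lemma together with the $L^n$-convergence of $\hat{\alpha}^{(k)}_\epsilon$ (from \citep{yu2021higher}) transfers the estimate to the limit.

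The main obstacle will be the combinatorial bookkeeping in the Gaussian integration step. The polynomial prefactor $\prod_\ell (p^\ell)^{k}$, once rewritten in the gap variables $\vec{q}$ via the inverse of the cover-matrix, becomes a sum of monomials whose cardinality and Gaussian moments must be controlled without producing extraneous factorials beyond the $n!^{|k|}$ implicit in the statement. This requires carefully tracking how many of the $n$ pair-intervals $[r_\ell,r_s]$ cover each of the $2n$ gaps and pairing that with a multinomial bound. The combinatorial pattern should be essentially the same as in the proof of \cref{theorem of EI of DILT of fbm}, once one accounts for the cover structure specific to the self-intersection setting, where each pair contributes a contiguous run of gaps rather than a single independent block.
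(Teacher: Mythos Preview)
Your proposal takes a genuinely different route from the paper. You work with the telescoped gap increments and the triangular change of variables $\vec p\mapsto\vec q$; the paper instead repeats the scheme of \cref{even moment bound}. It changes variables via $u_j=A^{1/2}\xi_j$ with $A$ the full covariance matrix of $(B^H_{s_\ell}-B^H_{r_\ell})_{\ell\le n}$, then applies Cauchy--Schwarz together with the cofactor identity $\sum_m b_{\ell m}^2=(A^{-1})_{\ell\ell}=c_{\ell\ell}/\det(A)$ to convert the polynomial prefactor into $\prod_{\ell}\Var(B^H_{s_\ell}-B^H_{r_\ell}\mid\text{others})^{-|k|/2}$. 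This decouples the polynomial from the remaining Gaussian integral $\Omega$, which is bounded uniformly by $C^n(n!)^{|k|}$. Local nondeterminism then controls $\det(A)^{-d/2}$ and each conditional variance separately, each contributing one exponent on a gap adjacent to $s_\ell$; the time integral is finally organised over the $(2n-1)!!$ interleavings of the $r_\ell$'s among $s_1<\cdots<s_n$. So the combinatorial pattern you refer to in the proof of \cref{theorem of EI of DILT of fbm} is \emph{not} a gap decomposition but this same cofactor trick.

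The assertion that ``$H|k|+Hd<1$ forces every gap exponent below one'' is where your argument is incomplete. The inverse of the cover matrix has $\pm 1$ entries but is not two-sparse in general: for the configuration $r_1<r_2<s_2<r_3<s_1<s_3$, one computes $p^2=q_{(s_2)}-q_{(s_1)}+q_{(s_3)}$. Expanding $\prod_\ell|p^\ell|^{|k|}$ then yields terms in which a single $q_j$ carries degree as large as $n|k|$, so that gap's exponent can reach $-Hd-nH|k|$, well below $-1$; the corresponding Dirichlet term diverges, and since you have already passed to absolute values no cancellation is available. The paper's cofactor device sidesteps this entirely because it never writes the polynomial in the $q$-basis: each $\ell$ contributes exactly one conditional-variance factor, bounded by one adjacent gap. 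Your route can likely be repaired with a sharper distribution argument, but as stated the integrability step does not follow from $H|k|+Hd<1$ alone.
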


It is also of interest to study limiting behavior of the mollified processes in the cases which do not converge. The idea here seems to have originated in \citep{yor1985renormalisation}, which in turn was influenced by Varadhan's celebrated renormalization of self-intersection local time of planar Brownian motion \citet{varadhan1969appendix}. This has led to a large number of similar results, too many to list here; however, even in relation to DSLT of fractional Brownian motion, we can refer the reader to \citep{rosen1988limit, markowsky2008renormalization, rosen1992renormalization, bock2014polymer, jaramillo2017asymptotic, jaramillo2019functional, xu2024central, yu2020asymptotic, yu2024limit}

We will prove similar theorems for the DSLT of fBm, as follows. 

\begin{thm}
\label{thm1}
    If $d=2$ and $\frac{1}{2}<H<1$, then
    \begin{align*}
        \lim_{\epsilon\to 0}\epsilon^{2-\frac{1}{H}}\hat{\alpha}^{(1)}_{t,\epsilon}\overset{d}{=} \mathcal{N}(0,\sigma_{1}^{2}),
    \end{align*}
    where $\sigma_{1}^{2}=\frac{(2H-1)t^{2H}}{H8\pi^{2}}B(2,2H-1)B(\frac{1}{H},\frac{4H-2}{2H})^{2}$, and $B(\cdot,\cdot)$ is the Beta function.
\end{thm}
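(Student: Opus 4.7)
The plan is to establish convergence in distribution via the method of moments, exploiting the Fourier representation together with the Gaussian structure of $B^H$. By symmetry between the two coordinates, we may take $k=(1,0)$. Writing
\[
    \hat{\alpha}^{(1)}_{t,\epsilon} = \frac{i}{(2\pi)^2}\int_{D}dr\,ds \int_{\R^2} p_1 \, e^{i\langle p,\, B^{H}_s-B^H_r\rangle}\, e^{-\epsilon|p|^2/2}\,dp,
\]
and performing the inner $\R^4$ Gaussian integral in the second moment gives
\[
    \E\bigl[(\hat{\alpha}^{(1)}_{t,\epsilon})^2\bigr] = \frac{1}{(2\pi)^2}\int_{D^2} \frac{\lambda(r,s,u,v)}{\bigl[(\mu_1+\epsilon)(\mu_2+\epsilon)-\lambda^2\bigr]^{2}}\,dr\,ds\,du\,dv,
\]
where $\mu_1=|s-r|^{2H}$, $\mu_2=|v-u|^{2H}$, and $\lambda = \E[(B^{H,1}_s-B^{H,1}_r)(B^{H,1}_v-B^{H,1}_u)]$ is the cross-covariance of the two scalar increments; the orthogonal coordinate simply contributes the usual Gaussian normalization.

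Next, I would partition $D^2$ by the relative ordering of $\{r,s,u,v\}$. On the two mirror-symmetric separated configurations $0<r<s<u<v<t$ and $0<u<v<r<s<t$, introduce the scaling $s-r=\epsilon^{1/(2H)}a$, $v-u=\epsilon^{1/(2H)}b$. A two-variable Taylor expansion of $x\mapsto\tfrac{1}{2}|x|^{2H}$ then gives
\[
    \lambda \sim H(2H-1)(s-r)(v-u)(u-s)^{2H-2}, \qquad (\mu_1+\epsilon)(\mu_2+\epsilon)-\lambda^2 \sim \epsilon^{2}(a^{2H}+1)(b^{2H}+1).
\]
Dominated convergence, together with the Beta identifications $\int_0^\infty a(a^{2H}+1)^{-2}\,da = \tfrac{1}{2H}\,B(\tfrac{1}{H},\tfrac{4H-2}{2H})$ and $\int_0^t\!\int_0^u (u-s)^{2H-2}\,ds\,du = t^{2H}\,B(2,2H-1)$, then produces
\[
    \lim_{\epsilon\to 0}\epsilon^{2(2-1/H)}\,\E\bigl[(\hat{\alpha}^{(1)}_{t,\epsilon})^2\bigr] = \sigma_1^2,
\]
with the factor $2$ coming from the two mirror orderings. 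For the interleaved and nested orderings, local nondeterminism \eqref{local nondeterminism} is used to bound the contribution by $o(\epsilon^{2/H-4})$, so that such configurations vanish after rescaling.

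For Gaussianity, I would compute $\E[(\hat{\alpha}^{(1)}_{t,\epsilon})^{2n}]$ via the $2n$-fold Fourier expansion. The resulting $4n$-dimensional Gaussian integral decomposes, by Wick's theorem, into a sum over perfect matchings of the $2n$ increment-intervals. After rescaling by $\epsilon^{2n(2-1/H)}$, only matchings pairing the $2n$ intervals into $n$ disjoint separated pairs survive in the limit, each such pair contributing a factor $\sigma_1^2$. Summing over the $(2n-1)!!$ such matchings produces $(2n-1)!!\,\sigma_1^{2n}$, the $2n$-th moment of $\mathcal{N}(0,\sigma_1^2)$; odd moments vanish by the oddness of the integrand under $p\mapsto -p$.

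The main obstacle is precisely the last step: rigorously showing that, in the $2n$-th moment, only the fully-paired separated matchings survive the rescaling. This demands quantitative control on configurations where three or more times cluster together, via sharp lower bounds on determinants of fBm covariance matrices obtained from local nondeterminism \eqref{local nondeterminism}. Uniform integrability to justify dominated convergence on the unbounded rescaled domain is a further delicate point; extending the variance bounds underlying \cref{theorem of EI of DSLT of fbm} to arbitrary integer moments should supply the needed control.
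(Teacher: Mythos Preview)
Your second-moment analysis is essentially the paper's \Cref{lemm1}: decompose $D^2$ by the ordering of the four times, show the separated configuration (the paper's region $D_3$) carries the full mass $\sigma_1^2$ after rescaling, and that the interleaved and nested pieces ($D_1$, $D_2$) are negligible. The paper does not invoke local nondeterminism for $D_1,D_2$ but instead uses the explicit two-point covariance bounds of \Cref{Bounds}; either tool suffices here, and your Beta-function identifications for the limiting integral are the same as the paper's.

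Where you diverge is in the passage to Gaussianity. You propose the method of moments: compute $\E[(\hat\alpha^{(1)}_{t,\epsilon})^{2n}]$ for every $n$, expand via Wick, and argue that only fully separated pair-matchings survive. The paper sidesteps this entirely through the Wiener chaos expansion. Writing $\hat\alpha^{(1)}_{t,\epsilon} = \sum_{q\ge 1} I_{2q-1}(f^{(1)}_{2q-1,t,\epsilon})$ (\Cref{explict form of f}), the first-chaos term $I_1(f^{(1)}_{1,t,\epsilon})$ is Gaussian for every fixed $\epsilon$. A second computation (\Cref{lemma3}) shows that $\epsilon^{4-2/H}\,\E[|I_1(f^{(1)}_{1,t,\epsilon})|^2]\to\sigma_1^2$ as well; orthogonality of the chaoses then forces $\epsilon^{2-1/H}\sum_{q\ge 2}I_{2q-1}(\cdot)\to 0$ in $L^2$, so the rescaled DSLT is asymptotically equal in $L^2$ to a Gaussian whose variance has already been identified. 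The entire argument thus reduces to \emph{two} second-moment computations and no higher moments at all. Your route is not wrong in principle, but the obstacle you yourself flag --- uniformly controlling all non-pair configurations in $D^{2n}$ for every $n$ --- is genuinely hard and would require determinant lower bounds well beyond anything the paper proves; the chaos trick (match the full $L^2$ norm to that of the first-chaos projection) is precisely the missing idea that makes the proof short.
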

\begin{thm}
\label{thm2}
    If $d=3$ and $\frac{1}{2}<H<\frac{2}{3}$, then
    \begin{align*}
        \lim_{\epsilon\to 0}\epsilon^{\frac{5}{2}-\frac{1}{H}}\hat{\alpha}_{t,\epsilon}^{(1)}\overset{d}{=} \mathcal{N}(0,\sigma_{2}^{2})
    \end{align*}
    where $\sigma_{2}^{2}=\frac{(2H-1)t^{2H}}{H16\pi^{3}}B(2,2H-1)B(\frac{1}{H},\frac{5H-2}{2H})^{2}$, and $B(\cdot,\cdot)$ is the Beta function.
\end{thm}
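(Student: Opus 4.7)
Following the template of \cref{thm1}, I would apply the method of moments to $X_\epsilon := \epsilon^{5/2 - 1/H}\, \hat{\alpha}^{(1)}_{t,\epsilon}$: establishing $\mathbb{E}[X_\epsilon^n] \to \mathbb{E}[Z^n]$ for $Z \sim \mathcal{N}(0,\sigma_2^2)$ and all $n \geq 1$ suffices for convergence in distribution, since the normal law is determined by its moments. Applying the Fourier representation of $\delta^{(1)}_\epsilon$ from \eqref{molifiled version} (with $|k|=1$ and $k_j = 1$ for the unique distinguished coordinate $j$) gives
\begin{equation*}
    \mathbb{E}\bigl[X_\epsilon^n\bigr] = \frac{i^n \epsilon^{n(5/2 - 1/H)}}{(2\pi)^{3n}} \int_{T_n} \int_{\mathbb{R}^{3n}} \prod_{l=1}^n (p_l)_j \; \mathbb{E}\!\left[ \exp\!\Bigl(i \sum_{l=1}^n \langle p_l, B^H_{s_l} - B^H_{r_l} \rangle\Bigr)\right] \prod_l e^{-\epsilon |p_l|^2 /2}\, d\mathbf{p}\, d\mathbf{r}\, d\mathbf{s},
\end{equation*}
with $T_n = \{(r_l,s_l) : 0 < r_l < s_l < t,\ l=1,\dots,n\}$. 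Coordinate independence factorizes the Gaussian expectation across the three spatial components. A sign flip $(p_l) \mapsto (-p_l)$ shows all odd moments vanish, matching the Gaussian symmetry.

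For the variance, integrating out the momenta produces an integrand of the form $(\Lambda_2 + \epsilon I)^{-1}_{12} \cdot [\det(\Lambda_2 + \epsilon I)]^{-3/2}$, where $\Lambda_2$ is the $2 \times 2$ covariance matrix of the single-coordinate increments $B^{H,j}_{s_l} - B^{H,j}_{r_l}$. Partitioning the time simplex by the ordering of $r_1, s_1, r_2, s_2$ and substituting $s_l - r_l = \epsilon^{1/(2H)} u_l$ to expose the scaling, self-similarity of $B^H$ forces the prefactor $\epsilon^{5 - 2/H}$ to cancel exactly. The restriction $H < 2/3$ is precisely what is needed for the resulting rescaled integral to converge at its upper endpoint in dimension $d=3$. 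The Beta factors $B(2, 2H-1)$ and $B(1/H, (5H-2)/(2H))$ then arise from the explicit one-dimensional time integrals in the limit, recovering the stated $\sigma_2^2$.

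For the $2m$-th moment I would enumerate orderings of the $2m$ intervals $(r_l, s_l)$. The asymptotically dominant contributions come from the $(2m-1)!!$ \emph{Wick pairings} in which the intervals split into $m$ disjoint pairs; each pair independently reproduces the variance integrand, summing to the Gaussian moment $(2m-1)!! \, \sigma_2^{2m}$. All other orderings (interlaced or nested) are bounded via the local nondeterminism estimate \eqref{local nondeterminism}, which provides lower bounds on $\det(\Lambda_{2m} + \epsilon I)$ in terms of the sorted time gaps, showing that such configurations are of strictly lower order in $\epsilon$ and therefore vanish in the limit.

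The main obstacle is exactly this last step: obtaining sharp, uniform two-sided control on the $2m$-dimensional covariance determinants and their relevant cofactors for every interlacing pattern, together with the combinatorial accounting required to verify that only the $(2m-1)!!$ disjoint pairings survive. This requires the full strength of local nondeterminism along with careful bookkeeping analogous to the arguments of \cite{yu2020asymptotic, yu2024limit}. A secondary delicate point is pinning down the exact prefactor $\sigma_2^2$, whose Beta-function structure must be identified from the explicit rescaled one-dimensional integrals; this is where the dependence on the three-dimensional ambient space (through the exponent $-3/2$ on the determinant) enters and differs from the $d=2$ constant $\sigma_1^2$ of \cref{thm1}.
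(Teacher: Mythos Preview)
Your method-of-moments approach is legitimate in principle but takes a considerably harder road than the paper. The paper sidesteps the entire higher-moment analysis by exploiting the Wiener chaos decomposition (\cref{explict form of f}): since $\hat{\alpha}^{(1)}_{t,\epsilon}=\sum_{q\ge 1} I_{2q-1}(f^{(1)}_{2q-1,t,\epsilon})$ with the chaoses orthogonal in $L^2$, it suffices to prove two second-moment statements: (i) the full rescaled variance $\epsilon^{5-2/H}\,\mathbb{E}[|\hat{\alpha}^{(1)}_{t,\epsilon}|^2]\to\sigma_2^2$ (\cref{lemm2}), and (ii) the rescaled variance of the \emph{first} chaos alone also tends to $\sigma_2^2$ (\cref{lemma4}). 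Subtracting, the tail $\epsilon^{5/2-1/H}\sum_{q\ge 2}I_{2q-1}(f^{(1)}_{2q-1,t,\epsilon})$ vanishes in $L^2$, and since $I_1(\cdot)$ is already Gaussian, convergence of its variance gives convergence in distribution immediately. Both (i) and (ii) are exactly the kind of two-point covariance computation you sketch for the variance step, so the combinatorics of $2m$-point determinants and the Wick-pairing dominance argument that you correctly identify as the main obstacle simply never arise. Your route could in principle be pushed through along the lines of \cite{yu2020asymptotic,yu2024limit}, but the chaos-expansion argument buys a genuine shortcut: one $L^2$ calculation in place of infinitely many moment estimates.
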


In the ensuing two sections we will prove these theorems, however they will require a significant amount of preliminaries and technical lemmas. We have placed this material into two appendices at the end of this article: \Cref{malliavin appendix} focuses on Malliavin calculus, whereas \Cref{appen:misc} is simply a collection of miscellaneous methods that are needed.

\section{Proofs of existence and exponential integrability, Theorems \ref{theorem of existence of DILT of fbm}, \ref{theorem of EI of DILT of fbm}, and \ref{theorem of EI of DSLT of fbm}.}

The heart of the matter is the following proposition, which provides the key estimate for \Cref{theorem of existence of DILT of fbm,theorem of EI of DILT of fbm}.

\begin{prop}
\label{even moment bound}
Let $n$ be even. If  $2|k|H+Hd<2$, then there exists a constant 
$C_{H,d,|k|}>0$, depending only on $H,d,$ and $|k|$, such that
\[
    I_{1} := \int_{[0,1]^{2n}} \int_{\mathbb{R}^{nd}}
    \exp\!\left(-\tfrac{1}{2}\sum_{j=1}^{d}\xi_{j}^{\intercal}A\xi_{j}\right)
    \prod_{j=1}^{d}\prod_{l=1}^{n}|\xi_{lj}|^{k_{j}} \, d\xi \, ds \, dr
    \;\;\leq\;\; C_{H,d,|k|}^{\,n}(n!)^{\,|k|+|k|H+Hd},
\]
where $A$ is the covariance matrix of the random vector
\[
    \big(B^{H,1}_{s_{1}}-\hat{B}^{H,1}_{r_{1}}, \dots, B^{H,1}_{s_{n}}-\hat{B}^{H,1}_{r_{n}}\big),
\]
and $\xi_{j}=(\xi_{1j},\dots,\xi_{nj})^{\intercal}\in \R^{n}$.
\end{prop}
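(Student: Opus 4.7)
The approach is to bound the inner Gaussian integral by a product of one-dimensional Gaussian moments using strong local nondeterminism for fractional Brownian motion, then extract the factorial growth from a Dirichlet--Beta integral on a time simplex.

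First I would decouple the $d$ coordinates via $\exp(-\tfrac12\sum_j\xi_j^\intercal A\xi_j)=\prod_{j=1}^d\exp(-\tfrac12\xi_j^\intercal A\xi_j)$ and use the permutation symmetry of the integrand (acting simultaneously on $(s_l,r_l,\xi_l)$) to restrict to $0<s_1<\cdots<s_n<1$ at the cost of a factor $n!$. Since $B^H$ and $\hat B^H$ are independent, the covariance splits as $A=A_B+A_{\hat B}$ with $A_B$ the covariance of $(B^{H,1}_{s_l})_{l=1}^n$; for each $j$ I apply the symmetric Cauchy--Schwarz inequality
$$\int_{\mathbb{R}^n}e^{-\tfrac12\xi^\intercal A\xi}\prod_{l}|\xi_{l}|^{k_j}\,d\xi\;\leq\;\Bigl(\int e^{-\xi^\intercal A_B\xi}\prod_{l}|\xi_{l}|^{k_j}\,d\xi\Bigr)^{1/2}\Bigl(\int e^{-\xi^\intercal A_{\hat B}\xi}\prod_{l}|\xi_{l}|^{k_j}\,d\xi\Bigr)^{1/2}$$
to reduce the task to two integrals, each controlled by strong LND of its own fBm. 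The Cauchy--Schwarz split effectively halves the determinant contribution and will be crucial for recovering the sharp threshold $2|k|H+Hd<2$ rather than the cruder $H(|k|+d)<1$.

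For the $A_B$-integral, the strong form of local nondeterminism derived from \cref{local nondeterminism} yields $\xi^\intercal A_B\xi\geq\kappa\sum_{l}\bigl(\sum_{m\geq l}\xi_{m}\bigr)^{2}(s_l-s_{l-1})^{2H}$. The triangular substitution $u_l:=\sum_{m\geq l}\xi_m$ (Jacobian one) diagonalises the quadratic form, and $|\xi_l|^{k_j}=|u_l-u_{l+1}|^{k_j}$ is controlled combinatorially via the elementary estimate $|u_l-u_{l+1}|^{k_j}\leq 2^{k_j}(|u_l|^{k_j}+|u_{l+1}|^{k_j})$ followed by distribution across the resulting $2^n$ monomials. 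Each resulting monomial integrates against the diagonalised Gaussian factor via
$$\int_{\mathbb{R}}|u|^{a k_j}e^{-\tfrac{\kappa}{2}u^{2}(s_l-s_{l-1})^{2H}}\,du\;\leq\;C\,(s_l-s_{l-1})^{-H(ak_j+1)},\qquad a\in\{0,1,2\},$$
producing bounds of the form $C^n\prod_{l}(s_l-s_{l-1})^{-H(a_l k_j+1)}$ subject to the aggregate constraint $\sum_l a_l=n$. The same scheme applied to the $A_{\hat B}$-integral yields a mirror bound in the order statistics $r_{(l)}$.

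Finally, taking square roots, multiplying the resulting estimates over $j=1,\ldots,d$, and integrating the $s$-variables and the sorted $r$-variables over their respective simplices via the Dirichlet--Beta identity
$$\int_{0<t_1<\cdots<t_n<1}\prod_{l}(t_l-t_{l-1})^{-\gamma}\,dt\;=\;\frac{\Gamma(1-\gamma)^n}{\Gamma(1+n(1-\gamma))},$$
combined with Stirling's asymptotics $n!/\Gamma(1+n(1-\gamma))\sim(n!)^{\gamma}$ (up to constants of geometric order), produces the claimed bound $C^n(n!)^{|k|+|k|H+Hd}$. I expect the main obstacle to be the careful bookkeeping of the binomial expansion: a uniform worst-case bound $a_l^{(j)}\leq 2$ would overshoot the critical exponent, so the aggregate constraints $\sum_l a_l^{(j)}=n$ must be exploited carefully when summing over the $2^n$ expansion terms, and the symmetric Cauchy--Schwarz split between $A_B$ and $A_{\hat B}$ is what restores the sharp threshold matching the hypothesis.
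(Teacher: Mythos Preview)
Your approach is correct and follows a genuinely different route from the paper's. The paper does not split $A=A_B+A_{\hat B}$ via Cauchy--Schwarz; instead it performs the global substitution $u_j=A^{1/2}\xi_j$, extracting $\det(A)^{-d/2}$ together with a residual factor $\prod_l(A^{-1})_{ll}^{|k|/2}$, and then identifies $(A^{-1})_{ll}$ as a ratio of conditional variances via the cofactor expansion and \Cref{lemma:Gaussiandeterminant}, separating the two fBm's only at that stage through $a+b\geq 2\sqrt{ab}$ applied to conditional variances. The leftover Gaussian moment $\Omega=\int e^{-|u|^2/2}|u|^{n|k|}\,du$ is bounded crudely by $C^n(n!)^{|k|}$, which is where the additive $|k|$ in the exponent comes from. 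Your Abel-summation diagonalisation and term-by-term Gaussian integration avoid that loss entirely: carried through, your scheme actually delivers $C^n(n!)^{H|k|+Hd}$, sharper than the stated bound by a factor $(n!)^{|k|}$, and your worry about the binomial bookkeeping is unwarranted since the Dirichlet denominator $\Gamma\bigl(1+\sum_l(1-\gamma_l)\bigr)$ depends only on $\sum_l\gamma_l$, which is fixed by the constraint $\sum_l a_l=n$ regardless of how the $a_l$'s are distributed. The trade-off is portability: your Cauchy--Schwarz split hinges on the independence of $B^H$ and $\hat B^H$ and so does not carry over to the self-intersection setting of \Cref{theorem of EI of DSLT of fbm}, whereas the paper's conditional-variance machinery treats both cases uniformly. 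One minor point to flag: you invoke strong local nondeterminism in the quadratic-form version $\mathrm{Var}\bigl(\sum_l\xi_l B^{H}_{s_l}\bigr)\geq\kappa\sum_l u_l^2(s_l-s_{l-1})^{2H}$, which is standard for fBm but is not the formulation \cref{local nondeterminism} quoted in the paper, so a sentence of justification (via Abel summation and the increment version of LND) would be in order.
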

\begin{proof}
    Clearly, $A$ is symmetric and positive definite. Hence there exists a symmetric positive definite matrix $B=(b_{ij})_{1\leq i,j\leq n}$ such that $B^{2}=A^{-1}$.
    Note that 
    \begin{align}
    \label{product inequality}
    \begin{split}
        \prod_{j=1}^{d}|\xi_{lj}|^{k_{j}}&\leq \prod_{j=1}^{d}|\sum_{m=1}^{d}\xi_{lm}^{2}|^{\frac{k_{j}}{2}}=\left|\sum_{m=1}^{d}\xi_{lm}^{2}\right|^{\frac{|k|}{2}}.
    \end{split}
    \end{align}

As such, we obtain
\begin{align*}
I_{1}  &\leq \frac{1}{(2\pi)^{nd}}\int_{[0,1]^{2n}}\int_{\R^{nd}}e^{-\frac{1}{2}\sum_{j=1}^{d}\xi_{j}^{^\intercal}A\xi_{j}}\prod_{l=1}^{n}\left|\sum_{m=1}^{d}\xi_{lm}^{2}\right|^{\frac{|k|}{2}}d\xi dsdr.
\end{align*}
Now we change variables $A^{\frac{1}{2}}\xi_{j}=u_{j}$, 
which yields
\begin{align*}
      I_{1}&\leq \frac{1}{(2\pi)^{nd}}\int_{[0,1]^{2n}}\det(A)^{-\frac{1}{2}d}\int_{\R^{nd}}e^{-\frac{1}{2}\sum_{j=1}^{d}u_{j}^{\intercal}u_{j}}\prod_{l=1}^{n}\left|\sum_{m=1}^{d}\left(\sum_{j=1}^{n}b_{lj}u_{jm}\right)^{2}\right|^{\frac{|k|}{2}}dudsdr\\
       &\leq  \frac{1}{(2\pi)^{nd}}\int_{[0,1]^{2n}}\det(A)^{-\frac{1}{2}d}\int_{\R^{nd}}e^{-\frac{1}{2}\sum_{j=1}^{d}u_{j}^{\intercal}u_{j}}\prod_{l=1}^{n}\left|\sum_{m=1}^{d}\sum_{j=1}^{n}b_{lj}^{2}\sum_{j=1}^{n}u_{jm}^{2}\right|^{\frac{|k|}{2}}dudsdr,
\end{align*}
where we apply the Cauchy--Schwarz inequality in the second inequality.  
It is well known that
\[
    A^{-1} = \frac{1}{\det(A)}C^{\intercal},
\]
where $C=(c_{ij})_{1\leq i,j\leq n}$ is the cofactor matrix of $A$. In particular, $c_{ll}=\det(A_{l})$, where $A_{l}$ is the submatrix of $A$ obtained by deleting its $l$-th row and $l$-th column. Note that 
\begin{align*}
    \sum_{j=1}^{n}b_{lj}^{2}&=\sum_{j=1}^{n}b_{lj}b_{jl} =A^{-1}_{ll} =\frac{c_{ll}}{\det(A)}. 
\end{align*} 
According to \Cref{lemma:Gaussiandeterminant}, for any permutation $\sigma$ of $\{1,\dots,n\}$ and $\pi$ of $\{1,\dots,n\}\setminus \{l\}$, 
\begin{align*}
    \det(A)&=\text{Var}(B^{H,1}_{s_{\sigma(1)}}-\hat{B}_{r_{\sigma(1)}}^{H,1})\times \text{Var}(B^{H,1}_{s_{\sigma(2)}}-\hat{B}_{r_{\sigma(2)}}^{H,1}|B^{H,1}_{s_{\sigma(1)}}-\hat{B}_{r_{\sigma(1)}}^{H,1})\times\cdots\\&\quad\times \text{Var}(B^{H,1}_{s_{\sigma(n)}}-\hat{B}_{r_{\sigma(n)}}^{H,1}|B^{H,1}_{s_{\sigma(j)}}-\hat{B}_{r_{\sigma(j)}}^{H,1},1\leq j \leq n-1),\\
    c_{ll}&=\det(A_{l})=\prod_{j\neq l}^{n}\text{Var}(B_{s_{\pi(j)}}^{H,1}-\hat{B}_{r_{\pi(j)}}^{H,1}|B^{H,1}_{s_{\pi(i)}}-\hat{B}^{H,1}_{r_{\pi(i)}}, i\in\{1,\dots,j-1\}\setminus\{l\}).
\end{align*}
Therefore we can choose proper $\sigma$ and $\pi$ such that 
\begin{align*}
\det(A)/c_{ll}=\text{Var}(B_{s_{l}}^{H,1}-\hat{B}^{H,1}_{r_{l}}|B_{s_{p}}^{H,1}-\hat{B}^{H,1}_{r_{p}}, p\in \{1,\dots,n\}\setminus \{l\}).
\end{align*}
Hence, we obtain
\begin{align*}
 I_{1}&\leq \frac{1}{(2\pi)^{nd}}\int_{[0,1]^{2n}}\det(A)^{-\frac{1}{2}d}\prod_{l=1}^{n}\left(\frac{c_{ll}}{\det(A)}\right)^{\frac{|k|}{2}}\times \int_{\R^{nd}}e^{-\frac{1}{2}\sum_{j=1}^{d}u_{j}^{\intercal}u_{j}}\left|\sum_{m=1}^{d}\sum_{j=1}^{n}u^{2}_{jm}\right|^{\frac{n|k|}{2}}du dsdr\\
     &\leq \frac{1}{(2\pi)^{nd}}\int_{[0,1]^{2n}}\det(A)^{-\frac{1}{2}d}\prod_{l=1}^{n}\left(\text{Var}(B_{s_{l}}^{H,1}-\hat{B}^{H,1}_{r_{l}}|B_{s_{p}}^{H,1}-\hat{B}^{H,1}_{r_{p}},1\leq p\neq l\leq n)\right)^{-\frac{|k|}{2}}\\
    &\quad\times \int_{\R^{nd}}e^{-\frac{1}{2}\sum_{j=1}^{d}u_{j}^{\intercal}u_{j}}\left|\sum_{m=1}^{d}\sum_{j=1}^{n}u^{2}_{jm}\right|^{\frac{n|k|}{2}}dudsdr.
\end{align*}
Due to the independence between $B^{H}$ and $\hat{B}^{H}$ and \Cref{Conditional Variance inequality}, we have
\begin{align}
    \det(A)&=\text{Var}(B^{H,1}_{s_{\sigma(1)}}-\hat{B}_{r_{\sigma(1)}}^{H,1})\times \text{Var}(B^{H,1}_{s_{\sigma(2)}}-\hat{B}_{r_{\sigma(2)}}^{H,1}|B^{H,1}_{s_{\sigma(1)}}-\hat{B}_{r_{\sigma(1)}}^{H,1})\times\cdots \nonumber \\
    &\quad\times \text{Var}(B^{H,1}_{s_{\sigma(n)}}-\hat{B}_{r_{\sigma(n)}}^{H,1}|B^{H,1}_{s_{\sigma(j)}}-\hat{B}_{r_{\sigma(j)}}^{H,1},1\leq j \leq n-1) \nonumber \\
    &\geq 2^{-n}\prod_{j=1}^{n}(\text{Var}(B_{s_{\sigma(j)}}^{H,1}|B_{s_{\sigma(k)}}^{H,1},1\leq k\leq j-1, \hat{B}_{r_{\sigma(i)}}^{H,1},1\leq i\leq j)\label{eqn:ineq1}\\
    &\quad+\text{Var}(\hat{B}_{r_{\sigma(j)}}^{H,1}|B_{s_{\sigma(k)}}^{H,1},1\leq k\leq j, \hat{B}_{r_{\sigma(i)}}^{H,1},1\leq i\leq j-1)) \nonumber\\
    &\geq \prod_{j=1}^{n}\text{Var}(B_{s_{\sigma(j)}}^{H,1}|B^{H,1}_{s_{\sigma(m)}},1\leq m\leq j-1)^{\frac{1}{2}}\text{Var}(\hat{B}_{r_{\sigma(j)}}^{H,1}|\hat{B}^{H,1}_{r_{\sigma(m)}},1\leq m\leq j-1)^{\frac{1}{2}} \label{eqn:ineq2}
\end{align}
for any permutation $\sigma$. We now give brief explanation for the two inequalities above. For the first inequality \cref{eqn:ineq1} we have used \Cref{Conditional Variance inequality} as follows
\begin{align*}
   &\text{Var}(B_{s_{\sigma(j)}}^{H,1}-\hat{B}^{H,1}_{r_{\sigma(j)}}|B_{s_{\sigma(i)}}^{H,1}-\hat{B}_{r_{\sigma(i)}}^{H,1},1\leq i\leq j-1)\geq \\
    \max(&\text{Var}(B_{s_{\sigma(j)}}^{H,1}-\hat{B}_{r_{\sigma(j)}}^{H,1}|B_{s_{\sigma(k)}}^{H,1},1\leq k\leq j-1, \hat{B}_{r_{\sigma(i)}}^{H,1},1\leq i\leq j)\\
    , &\text{Var}(B_{s_{\sigma(j)}}^{H,1}-\hat{B}_{r_{\sigma(j)}}^{H,1}|B_{s_{\sigma(k)}}^{H,1},1\leq k\leq j, \hat{B}_{r_{\sigma(i)}}^{H,1},1\leq i\leq j-1)),
\end{align*}
which implies
\begin{align*}
    2\text{Var}(B_{s_{\sigma(j)}}^{H,1}-\hat{B}^{H,1}_{r_{\sigma(j)}}|B_{s_{\sigma(i)}}^{H,1}-\hat{B}_{r_{\sigma(i)}}^{H,1},1\leq i\leq j-1)&\geq \text{Var}(B_{s_{\sigma(j)}}^{H,1}|B_{s_{\sigma(k)}}^{H,1},1\leq k\leq j-1, \hat{B}_{r_{\sigma(i)}}^{H,1},1\leq i\leq j)\\
    &\quad+\text{Var}(\hat{B}_{r_{\sigma(j)}}^{H,1}|B_{s_{\sigma(k)}}^{H,1},1\leq k\leq j, \hat{B}_{r_{\sigma(i)}}^{H,1},1\leq i\leq j-1)
\end{align*}
for $2\leq j\leq n$. Note that 
\begin{align*}
    \text{Var}(B_{s_{\sigma(1)}}^{H,1}-\hat{B}_{r_{\sigma(1)}}^{H,1})&= \text{Var}(B_{s_{\sigma(1)}}^{H,1})+\text{Var}(\hat{B}_{r_{\sigma(1)}}^{H,1})\\
    &\geq\frac{1}{2}(\text{Var}(B_{s_{\sigma(1)}}^{H,1})+\text{Var}(\hat{B}_{r_{\sigma(1)}}^{H,1})),
\end{align*}
and the first inequality \cref{eqn:ineq1} follows. For the second inequality \cref{eqn:ineq2}, since the natural filtrations of $B^{H,1}$ and $\hat{B}^{H,1}$ are independent, the $\sigma$-algebra generated by $B_{s_{\sigma(1)}}^{H,1},\dots,B_{s_{\sigma(j)}}^{H,1}$ is independent of the one generated by $\hat{B}^{H,1}_{r_{\sigma(1)}},\dots,\hat{B}_{r_{\sigma(j)}}^{H,1}$ for all $1\leq j\leq n$. Hence by \Cref{conditional expectation extra independent sigma} together with  $a+b\geq2\sqrt{ab}$ for all $a,b\geq 0$, we have
\begin{align*}
    &\text{Var}(B_{s_{\sigma(j)}}^{H,1}|B_{s_{\sigma(k)}}^{H,1},1\leq k\leq j-1, \hat{B}_{r_{\sigma(i)}}^{H,1},1\leq i\leq j)+\text{Var}(\hat{B}_{r_{\sigma(j)}}^{H,1}|B_{s_{\sigma(k)}}^{H,1},1\leq k\leq j, \hat{B}_{r_{\sigma(i)}}^{H,1},1\leq i\leq j-1)\\
     &\geq2\sqrt{\text{Var}(B_{s_{\sigma(j)}}^{H,1}|B_{s_{\sigma(i)}}^{H,1},1\leq i\leq j-1)\text{Var}(\hat{B}_{r_{\sigma(j)}}^{H,1}|\hat{B}_{r_{\sigma(i)}}^{H,1},1\leq i\leq j-1)}
\end{align*}
for all $1\leq j\leq n$. As such, the second inequality \cref{eqn:ineq2} follows.
Denote by $\Phi_{n}$ the set of all permutations of $\{1,\dots,n\}$, and $\Delta_{\sigma}^{n}=\{(s_{\sigma(1)},\dots,s_{\sigma(n)})\in [0,1]^{n};0\leq s_{\sigma(1)}\leq\cdots \leq s_{\sigma(n)}\leq 1\}$.
As such, we have
\begin{align*}
     I_{1}&\leq \frac{1}{(2\pi)^{nd}}\sum_{\sigma,\pi\in\Phi_{n}}\int_{\Delta_{\sigma}^{n}\times\Delta_{\pi}^{n}}\prod_{j=1}^{n}\left(\text{Var}(B_{s_{\sigma(j)}}^{H,1}|B^{H,1}_{s_{\sigma(m)}},1\leq m\leq j-1)\right)^{-\frac{d}{4}}\\
     &\quad\times\left(\text{Var}(\hat{B}_{r_{\pi(j)}}^{H,1}|\hat{B}^{H,1}_{r_{\pi(m)}},1\leq m\leq j-1)\right)^{-\frac{d}{4}}\times \prod_{l=1}^{n}\left(\text{Var}(B_{s_{l}}^{H,1}|B_{s_{p}}^{H,1},1\leq p\neq l\leq n)\right)^{-\frac{|k|}{4}}\\
     &\quad\times \left(\text{Var}(\hat{B}_{r_{l}}^{H,1}|\hat{B}_{r_{p}}^{H,1},1\leq p\neq l\leq n)\right)^{-\frac{|k|}{4}}\times \int_{\R^{nd}}e^{-\frac{1}{2}\sum_{j=1}^{d}u_{j}^{\intercal}u_{j}}\left|\sum_{m=1}^{d}\sum_{j=1}^{n}u^{2}_{jm}\right|^{\frac{n|k|}{2}}dudsdr\\
    &=(n!)^{2}\frac{1}{(2\pi)^{nd}}\Biggl(\int_{\{0\leq s_{1}\cdots\leq s_{n}\leq1\}}\prod_{j=1}^{n}\text{Var}(B_{s_{j}}^{H,1}|B^{H,1}_{s_{m}},1\leq m\leq j-1)^{-\frac{d}{4}}\\
     &\quad\times \prod_{l=1}^{n}\text{Var}(B_{s_{l}}^{H,1}|B_{s_{p}}^{H,1},1\leq p\neq l\leq n)^{-\frac{|k|}{4}}ds\Biggl)^{2}\times \int_{\R^{nd}}e^{-\frac{1}{2}\sum_{j=1}^{d}u_{j}^{\intercal}u_{j}}\left|\sum_{m=1}^{d}\sum_{j=1}^{n}u^{2}_{jm}\right|^{\frac{n|k|}{2}}du.
\end{align*}
Denote
\begin{align*}
    \Lambda&:=\int_{\{0\leq s_{1}\cdots\leq s_{n}\leq1\}}\prod_{j=1}^{n}\text{Var}(B_{s_{j}}^{H,1}|B^{H,1}_{s_{m}},1\leq m\leq j-1)^{-\frac{d}{4}}\\
     &\quad\times \prod_{l=1}^{n}\text{Var}(B_{s_{l}}^{H,1}|B_{s_{p}}^{H,1},1\leq p\neq l\leq n)^{-\frac{|k|}{4}}ds, \\
     \Omega&:=\int_{\R^{nd}}e^{-\frac{1}{2}\sum_{j=1}^{d}u_{j}^{\intercal}u_{j}}\left|\sum_{m=1}^{d}\sum_{j=1}^{n}u^{2}_{jm}\right|^{\frac{n|k|}{2}}du.
\end{align*}
According to the local nondeterminism \cref{local nondeterminism},
\begin{align}
\label{Bounds for det(A)}
\prod_{j=1}^{n}\text{Var}(B_{s_{j}}^{H,1}|B^{H,1}_{S_{m}},1\leq m\leq j-1)^{-\frac{d}{4}}&\leq \kappa^{-\frac{dn}{4}}\prod_{j=1}^{n}(s_{j}-s_{j-1})^{-\frac{Hd}{2}},
\end{align}
\begin{align}
\label{Bounds for prod Var}
\begin{split}
\prod_{l=1}^{n}\text{Var}(B_{s_{l}}^{H,1}|B_{s_{j}}^{H,1},1\leq j\neq l\leq n)^{-\frac{|k|}{4}}&\leq \kappa^{-\frac{|k|n}{4}}(s_{n}-s_{n-1})^{-\frac{H|k|}{2}}(s_{2}-s_{1})^{-\frac{H|k|}{2}}\\
    &\quad\times\prod_{l=2}^{n-1}\min((s_{l}-s_{l-1})^{2H},(s_{l+1}-s_{l})^{2H})^{-\frac{|k|}{4}}\\
    &\leq \kappa^{-\frac{|k|n}{2}}(s_{n}-s_{n-1})^{-\frac{H|k|}{2}} (s_{2}-s_{1})^{-\frac{H|k|}{2}}\\
    &\quad\times\prod_{l=2}^{n-1}((s_{l}-s_{l-1})^{-\frac{H|k|}{2}}+(s_{l+1}-s_{l})^{-\frac{H|k|}{2}}),
    \end{split}
\end{align}
on the set $\{0\leq s_{1}\leq \cdots\leq s_{n}\leq 1\}$.
Combining \cref{Bounds for det(A)} and \cref{Bounds for prod Var}, we have 
\begin{align*}
    \Lambda&\leq \kappa^{-\frac{|k|n+dn}{4}}\int_{\{0\leq s_{1}\leq\cdots\leq s_{n\leq 1}\}}(s_{2}-s_{1})^{-\frac{H|k|}{2}}(s_{n}-s_{n-1})^{-\frac{H|k|}{2}}\prod_{j=1}^{n}(s_{j}-s_{j-1})^{-\frac{Hd}{2}}\\
    &\quad\times\prod_{l=2}^{n-1}((s_{l}-s_{l-1})^{-\frac{H|k|}{2}}+(s_{l+1}-s_{l})^{-\frac{H|k|}{2}})ds\\
    &=\kappa^{-\frac{|k|n+dn}{4}}\sum_{J\in2^{\{2,\dots,n-1\}}}\int_{\{0\leq s_{1}\leq\cdots\leq s_{n\leq 1}\}}(s_{2}-s_{1})^{-\frac{H|k|}{2}}(s_{n}-s_{n-1})^{-\frac{H|k|}{2}}\prod_{j=1}^{n}(s_{j}-s_{j-1})^{-\frac{Hd}{2}}\\
    &\quad\times\prod_{l\in J}(s_{l}-s_{l-1})^{-\frac{H|k|}{2}}\prod_{l\in J^{c}}(s_{l+1}-s_{l})^{-\frac{H|k|}{2}}ds\\
    &\leq \kappa^{-\frac{|k|n+nd}{4}}\sum_{J\subset\{2,\dots,n-1\}} \frac{c^{n}}{\Gamma(n(1-\frac{H|k|}{2}-\frac{Hd}{2})+1)}\\
    &\leq \kappa^{-\frac{|k|n+nd}{4}}2^{n-2}c^{n}(n!)^{\frac{Hd}{2}+\frac{H|k|}{2}-1}\\
    &\leq C_{H,d,|k|}^{n}(n!)^{\frac{Hd}{2}+\frac{H|k|}{2}-1}.
\end{align*}
Note that the integrand in the third line contains the term 
\[
    (s_{2}-s_{1})^{-H|k|-\tfrac{Hd}{2}} \, (s_{n}-s_{n-1})^{-H|k|-\tfrac{Hd}{2}}
\]
when $J$ includes both $2$ and $n-1$. This term is the most dominant one.  
Therefore, we apply \Cref{Beta function} in the second inequality under the condition $2H|k|+Hd<2$, and then use \Cref{lmag} in the third inequality. Here $C_{H,d,|k|}$ denotes a positive constant depending only on $H$, $d$, and $|k|$.
Note that 
\begin{align}
\label{Omega}
\begin{split}
    \Omega&=\int_{\R^{nd}}e^{-\frac{1}{2}\sum_{j=1}^{d}v_{j}^{\intercal}v_{j}}\left|\sum_{m=1}^{d}\sum_{j=1}^{n}v^{2}_{jm}\right|^{\frac{n|k|}{2}}dv\\
    &\leq \int_{\R^{nd}} e^{-\frac{1}{2}\sum_{j=1}^{d}\sum_{m=1}^{n}v_{mj}^{2}}(nd)^{\frac{n|k|}{2}}\left|\max_{1\leq m\leq n, 1\leq j\leq d}v_{mj}^{n|k|}\right|dv\\
    &\leq (nd)^{\frac{n|k|}{2}}\int_{\R^{nd}}e^{-\frac{1}{2}\sum_{j=1}^{d}\sum_{m=1}^{n}v_{mj}^{2}}\left|\sum_{m=1}^{n}\sum_{j=1}^{d}v_{mj}^{n|k|}\right|dv\\
    &\leq(nd)^{\frac{n|k|}{2}}\int_{\R^{nd}}e^{-\frac{1}{2}\sum_{j=1}^{d}\sum_{m=1}^{n}v_{mj}^{2}}\sum_{m=1}^{n}\sum_{j=1}^{d}\left|v_{mj}^{n|k|}\right|dv\\
    &=(nd)^{\frac{n|k|}{2}+1}\int_{\R^{nd}}e^{-\frac{1}{2}\sum_{j=1}^{d}\sum_{m=1}^{n}v_{mj}^{2}}\left|v_{11}^{n|k|}\right|dv\\
    &\leq C_{d,|k|}^{n}n^{\frac{n|k|}{2}}\int_{\R}e^{-\frac{1}{2}v_{11}^{2}}\left|v_{11}^{n|k|}\right|dv_{11}\\
    &\leq C_{d,|k|}^{n}n^{\frac{n|k|}{2}}(n|k|-1)!!\\
    &\leq C_{d,|k|}^{n}(n!)^{|k|},
\end{split}
\end{align}
where we use Stirling’s estimate in the last inequality, and $C_{d,|k|}$ denotes a positive constant depending only on $d$ and $|k|$ (whose value may change from line to line). Hence,
\begin{align}
\label{Bounds for moments of DILT of fbm}
\begin{split}
I_{1}&\leq 2^{-n}(n!)^{2}\Lambda^{2}\Omega\\
    &\leq C_{H,d,|k|}^{n}(n!)^{|k|+|k|H+Hd}.
\end{split}
\end{align}
\end{proof}
\begin{proof}[Proof of \Cref{theorem of existence of DILT of fbm,theorem of EI of DILT of fbm}]
   According to \Cref{condition for convergence in L^{p}} and \Cref{even L^{p}}, it suffices to show that
\[
    \mathbb{E}\!\left[(\alpha_{\epsilon_{1}}^{(k)})^{q} \, (\alpha_{\epsilon_{2}}^{(k)})^{\,n-q}\right]
\]
converges to the same value as $\epsilon_{1},\epsilon_{2}\to 0$, for all even $n$ and all $1\leq q\leq n$. Assuming $\epsilon_{1},\epsilon_{2}>0$ and  according to Fubini's theorem, we have
    \begin{align*}
        \mathbb{E}[(\alpha^{(k)}_{\epsilon_{1}})^{q}(\alpha^{(k)}_{\epsilon_{2}})^{n-q}]
        &=\frac{(i)^{n|k|d}}{(2\pi)^{nd}}\int_{[0,1]^{2n}}\int_{\R^{nd}}\mathbb{E}[e^{i\sum_{j=1}^{d}\sum_{l=1}^{n}\xi_{lj}(B^{H,j}_{s}-\hat{B}_{r}^{H,j})}](e^{-\frac{\epsilon_{1}\sum_{j=1}^{d}\sum_{l=1}^{q}\xi_{lj}^{2}}{2}}e^{-\frac{\epsilon_{2}\sum_{j=1}^{d}\sum_{l=q+1}^{n}\xi_{lj}^{2}}{2}})\\ &\quad\times\prod_{j=1}^{d}\prod_{l=1}^{n}\xi_{lj}^{k_{j}}d\xi drds\\
        &=\frac{(i)^{n|k|d}}{(2\pi)^{nd}}\int_{[0,1]^{2n}}\int_{\R^{nd}}e^{-\frac{1}{2}\sum_{j=1}^{d}\xi_{j}^{^\intercal}A\xi_{j}}(e^{-\frac{\epsilon_{1}\sum_{j=1}^{d}\sum_{l=1}^{q}\xi_{lj}^{2}}{2}}e^{-\frac{\epsilon_{2}\sum_{j=1}^{d}\sum_{l=q+1}^{n}\xi_{lj}^{2}}{2}})\\
        &\quad\times\prod_{j=1}^{d}\prod_{l=1}^{n}\xi_{lj}^{k_{j}}d\xi drds,
    \end{align*}
     where $A$ is the covariance matrix of the random vector $(B^{H,1}_{s_{1}}-\hat{B}^{H,1}_{r_{1}},\dots,B_{s_{n}}^{H,1}-\hat{B}_{r_{n}}^{H,1})$ and $\xi_{j}=(\xi_{1j},\dots,\xi_{nj})^{\intercal}$. Note that 
     \begin{align*}
        e^{-\frac{1}{2}\sum_{j=1}^{d}\xi_{j}^{^\intercal}A\xi_{j}}(e^{-\frac{\epsilon_{1}\sum_{j=1}^{d}\sum_{l=1}^{q}\xi_{lj}^{2}}{2}}e^{-\frac{\epsilon_{2}\sum_{j=1}^{d}\sum_{l=q+1}^{n}\xi_{lj}^{2}}{2}})\prod_{j=1}^{d}\prod_{l=1}^{n}\xi_{lj}^{k_{j}} \longrightarrow        e^{-\frac{1}{2}\sum_{j=1}^{d}\xi_{j}^{^\intercal}A\xi_{j}}\prod_{j=1}^{d}\prod_{l=1}^{n}\xi_{lj}^{k_{j}} 
     \end{align*}
     as $\epsilon_{1},\epsilon_{2}\to 0$ and it is bounded by 
     \begin{align*}
         e^{-\frac{1}{2}\sum_{j=1}^{d}\xi_{j}^{^\intercal}A\xi_{j}}\prod_{j=1}^{d}\prod_{l=1}^{n}|\xi_{lj}|^{k_{j}}.
     \end{align*}
     By the dominated convergence theorem and \Cref{even moment bound},
     \begin{align*}
         \mathbb{E}[(\alpha^{(k)}_{\epsilon_{1}})^{q}(\alpha^{(k)}_{\epsilon_{2}})^{n-q}] \longrightarrow \frac{(i)^{n|k|d}}{(2\pi)^{nd}}\int_{[0,1]^{2n}}\int_{\R^{nd}}e^{-\frac{1}{2}\sum_{j=1}^{d}\xi_{j}^{^\intercal}A\xi_{j}}\prod_{j=1}^{d}\prod_{l=1}^{n}\xi_{lj}^{k_{j}}d\xi dsdr
     \end{align*}
     as $\epsilon_{1},\epsilon_{2}\to0$ for all even $n$ and $1\leq q\leq n$ when $2|k|H+Hd<2$.
     This implies that $\alpha_{\epsilon}^{(k)}$ converges in all $L^{n}(\Omega)$ as $\epsilon\to 0$ by \Cref{even L^{p}} and \Cref{condition for convergence in L^{p}}. We therefore can denote its limit by $\alpha^{(k)}$ and its even $n$-th moment is as follows:
     \begin{align*}
         \mathbb{E}[(\alpha^{(k)})^{n}]&=\mathbb{E}\left[\frac{(i)^{n|k|d}}{(2\pi)^{nd}}\int_{[0,1]^{2n}}\int_{\R^{nd}}e^{i\sum_{j=1}^{d}\sum_{l=1}^{n}\xi_{lj}(B^{H,j}_{s}-\hat{B}_{r}^{H,j})}\prod_{j=1}^{d}\prod_{l=1}^{n}\xi_{lj}^{k_{j}}d\xi dsdr\right]\\
         &\leq \frac{1}{(2\pi)^{nd}}\int_{[0,1]^{2n}}\int_{\R^{nd}}\mathbb{E}\left[e^{i\sum_{j=1}^{d}\sum_{l=1}^{n}\xi_{lj}(B^{H,j}_{s}-\hat{B}_{r}^{H,j})}\right]\prod_{j=1}^{d}\prod_{l=1}^{n}\xi_{lj}^{k_{j}}d\xi dsdr\\
         &\leq\frac{1}{(2\pi)^{nd}}\int_{[0,1]^{2n}}\int_{\R^{nd}} e^{-\frac{1}{2}\sum_{j=1}^{d}\xi_{j}^{^\intercal}A\xi_{j}}\prod_{j=1}^{d}\prod_{l=1}^{n}|\xi_{lj}|^{k_{j}}d\xi dsdr\\
         &\leq  C_{H,d,|k|}^{n}(n!)^{|k|+|k|H+Hd},
     \end{align*}
where the last inequality follows from \Cref{even moment bound}. The odd moments of $\alpha^{(k)}$ can be tackled by Jensen's inequality. Supposing $n$ is odd and utilising Jensen's inequality on the concave function $f(x)=x^{\frac{n}{n+1}}$ with the random variable $|\theta|^{n+1}$, we obtain
\begin{align*}
    \mathbb{E}[|\alpha^{(k)}_{\epsilon}|^{n}]&=\mathbb{E}[|\alpha^{(k)}_{\epsilon}|^{(n+1)\frac{n}{n+1}}]\\
    &\leq \mathbb{E}[|\alpha^{(k)}_{\epsilon}|^{(n+1)}]^{\frac{n}{n+1}}\\
    &\leq C_{H,d,|k|}^{n}((n+1)!)^{\frac{n(|k|+|k|H+dH)}{n+1}}\\
    &\leq C_{H,d,|k|}^{n}(n!)^{|k|+|k|H+dH},
\end{align*}
where we have obtained the preceding 2nd inequality via \Cref{even moment bound}. Regarding the 3rd inequality, we have applied \Cref{lmag} to $((n+1)!)^{\frac{n}{n+1}}(\frac{n}{n+1})^{n+1}$:
\begin{align*}
    ((n+1)!)^{\frac{n}{n+1}}\left (\frac{n}{n+1}\right )^{n+1}&\leq \Gamma\left (\left (\frac{n}{n+1}\right )(n+1)+1 \right)
\end{align*}
and thus
\begin{align*}
    ((n+1)!)^{\frac{n}{n+1}}&\leq (n!)\left (\frac{n+1}{n}\right )^{n+1}.
\end{align*}
 Hence for $0\leq\beta<\frac{1}{|k|+|k|H+Hd}$ and $n\in\mathbb{N}$, by Jensen's inequality, we have
\begin{align*}
    \mathbb{E}\left[|\alpha^{(k)}|^{\beta n}\right]\leq \mathbb{E}\left[|\alpha^{(k)}|^{n}\right]^\beta\leq C_{H,d,|k|}^{n\beta}(n!)^{\beta(|k|+|k|H+Hd)},
\end{align*}
Thus, by Monotone convergence theorem
\begin{align*}  
    \mathbb{E}\left[e^{M|\alpha|^{\beta}}\right] = \sum_{n=0}^{\infty}\frac{M^{n}\mathbb{E}[|\alpha^{(k)}|^{\beta n}]}{n!} 
    \leq \sum_{n=0}^{\infty}M^{n}C_{H,d,|k|}^{n}(n!)^{\beta(|k|+|k|H+Hd)-1} 
    < \infty
\end{align*}
for all $M>0$.
\end{proof}

\if2
For simplicity, we focus on $t=1$, and denote
\begin{align*}
    \hat{\alpha}^{(k)} := \int_{0}^{t}\int_{0}^{s} \delta^{(k)}(B^{H}_{s}-B^{H}_{r}) \, ds \, dr,
\end{align*}
since other cases can be obtained by scaling.
\begin{thm}
    When $H|k|+Hd<1$ and $\beta < \frac{1}{|k|+|k|H+dH}$, there exists a constant $M>0$ such that 
    \begin{align*}
        \mathbb{E}[e^{M(\hat{\alpha}^{(k)})^{\beta}}]<\infty,
    \end{align*}
    where $k=(k_{1},\dots,k_{d})\in \mathbb{N}^{d}$ and $|k|=k_{1}+\cdots+k_{d}$.
\end{thm}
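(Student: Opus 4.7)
The strategy is to mirror the proof of \Cref{theorem of EI of DILT of fbm}: first establish a moment bound
\begin{align*}
    \mathbb{E}\bigl[(\hat{\alpha}^{(k)})^{n}\bigr] \leq \tilde{C}_{H,d,|k|}^{\,n}\,(n!)^{|k|+|k|H+Hd}
\end{align*}
for all even $n$ under $H|k|+Hd<1$, and then repeat verbatim the Jensen's inequality and monotone-convergence expansion used at the end of that proof. The exponential-integrability stage is purely formal once the moment estimate is in hand, so the substantive work is the self-intersection analog of \Cref{even moment bound}.

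Accordingly, I would first reproduce the Fourier derivation of the DILT argument: expand $\hat{\alpha}_\epsilon^{(k)}$ via the Fourier representation of $\delta_\epsilon^{(k)}$, take the $n$-th power, apply Fubini, and evaluate the characteristic function to rewrite $\mathbb{E}[(\hat{\alpha}_\epsilon^{(k)})^{n}]$ as a Gaussian integral over $\R^{nd}$, where now the covariance matrix $A$ is that of the vector $(B^{H,1}_{s_1}-B^{H,1}_{r_1},\dots,B^{H,1}_{s_n}-B^{H,1}_{r_n})$ and the time integration runs over the simplicial region $\Delta=\{(r_j,s_j)_{j=1}^{n}:0<r_j<s_j<1\}$. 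The inequality \cref{product inequality}, the change of variables $u_j=A^{1/2}\xi_j$, Cauchy--Schwarz, and the cofactor identity used in the DILT proof all carry over, reducing matters to controlling
\begin{align*}
    \int_{\Delta}\det(A)^{-d/2}\prod_{l=1}^{n}\bigl(\Var(B^{H,1}_{s_l}-B^{H,1}_{r_l}\mid B^{H,1}_{s_p}-B^{H,1}_{r_p},\,p\ne l)\bigr)^{-|k|/2}\, dr\, ds
\end{align*}
multiplied by the Gaussian moment $\Omega$, already bounded by $C^{n}(n!)^{|k|}$ in \cref{Omega}.

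The principal obstacle is that here all $2n$ time points $\{r_j,s_j\}_{j=1}^n$ lie on a \emph{single} fBm, so the splitting \cref{eqn:ineq1,eqn:ineq2} — which crucially exploited the independence of $B^{H}$ and $\hat{B}^{H}$ — is no longer available. My plan is to sum over the permutations that order these $2n$ endpoints as $0\leq t_{(1)}\leq\dots\leq t_{(2n)}\leq 1$, condition on the individual endpoints rather than on the differences (monotonicity of conditional variance), and then invoke the Hu--Nualart local nondeterminism estimate \cref{local nondeterminism} to produce lower bounds
\begin{align*}
    \Var\bigl(B^{H,1}_{s_l}-B^{H,1}_{r_l}\,\big|\,\text{earlier endpoints}\bigr) \geq \kappa\,\bigl(\text{minimum gap adjacent to } s_l \text{ or } r_l\bigr)^{2H},
\end{align*}
together with $\det(A)\geq \kappa^{2n}\prod_{j=1}^{2n}(t_{(j)}-t_{(j-1)})^{2H}$, the latter following from the Gaussian determinant identity \Cref{lemma:Gaussiandeterminant} applied to the $2n$ ordered fBm values.

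With these estimates, the time integral decomposes, as in the DILT case, into a finite sum of Beta-type integrals of the form $\int_{\{0\leq t_1\leq\dots\leq t_{2n}\leq 1\}}\prod_{j}(t_j-t_{j-1})^{-\gamma_j}\,dt$ with each exponent $\gamma_j\leq H(|k|+d)<1$; \Cref{Beta function} and \Cref{lmag} then yield a bound of order $C^{n}(n!)^{H(|k|+d)-1}$, while the overhead from the $(2n)!/n!\leq 4^n n!$ orderings is absorbed into the constant. Combining with the $(n!)^{|k|}$ contribution of $\Omega$ produces the stated moment bound, and the conclusion follows by repeating the Jensen/Taylor-series calculation at the end of the proof of \Cref{theorem of EI of DILT of fbm}.
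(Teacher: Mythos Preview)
Your overall strategy coincides with the paper's, including the reduction to the $\det(A)^{-d/2}\prod_l(\Var(\cdot\mid\cdot))^{-|k|/2}$ integrand and the handling of $\Omega$. The genuine gap is the determinant lower bound. You claim $\det(A)\geq\kappa^{2n}\prod_{j=1}^{2n}(t_{(j)}-t_{(j-1)})^{2H}$ via the Gaussian determinant identity applied to the $2n$ ordered fBm values. But $A$ is the $n\times n$ covariance of the \emph{increments} $B^{H,1}_{s_l}-B^{H,1}_{r_l}$, and \Cref{lemma:Gaussiandeterminant} applied to $(B^{H,1}_{t_{(1)}},\dots,B^{H,1}_{t_{(2n)}})$ yields the determinant of a different $2n\times 2n$ matrix, not $\det(A)$. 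Even granting the $2n$-gap bound (it is trivially implied by the sharper $n$-gap bound since each extra gap is at most $1$), it is too loose: it puts an exponent $Hd$ on every one of the $2n$ gaps, so the Beta estimate on the $2n$-simplex gives $c^{2n}/\Gamma(2n-2nHd-nH|k|+1)\lesssim C^n(n!)^{2Hd+H|k|-2}$, not $(n!)^{H(|k|+d)-1}$ as you assert. After multiplying by the correct combinatorial factor $(2n)!/2^n\sim C^n(n!)^2$ (your count $(2n)!/n!$ is also off), one obtains $\Lambda\leq C^n(n!)^{2Hd+H|k|}$ and hence $\mathbb{E}[|\hat\alpha^{(k)}|^n]\leq C^n(n!)^{|k|+H|k|+2Hd}$, which proves exponential integrability only for $\beta<\frac{1}{|k|+H|k|+2Hd}$, strictly weaker than the theorem.

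The paper applies \Cref{lemma:Gaussiandeterminant} to the $n$ differences directly, uses \Cref{Conditional Variance inequality} to replace each $\Var(B^{H,1}_{s_l}-B^{H,1}_{r_l}\mid\text{earlier differences})$ by the conditional variance of the single endpoint $s_l$ given all earlier individual endpoints, and only then invokes local nondeterminism. This yields $\det(A)\geq\kappa^n\prod_{l=1}^n(s_l-\tau_l)^{2H}$, where $\tau_l$ is the nearest left neighbour of $s_l$ in the full ordering of the $2n$ points. With only $n$ of the $2n$ gaps carrying the $Hd$ exponent, the Beta bound sharpens to $C^n(n!)^{Hd+H|k|-2}$, and after the $n!\cdot(2n-1)!!\sim C^n(n!)^2$ combinatorics one recovers the required $\Lambda\leq C^n(n!)^{Hd+H|k|}$.
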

\fi 

\begin{proof}[Proof of Theorem \ref{theorem of EI of DSLT of fbm} (sketch)]
Denote $A$ as the covariance matrix of the random vector $(B^{H,1}_{s_{1}}-B^{H,1}_{r_{1}},\dots,B_{s_{n}}^{H,1}-B_{r_{n}}^{H,1})$, where $B^{H,1}$ is the first component of $B^{H}$. Since $A$ is symmetric positive definite, there exists a matrix $(b_{ij})_{1\leq i,j\leq n}=B$ such that $A^{-1}=B^{2}$. Similar to the proof of \Cref{theorem of existence of DILT of fbm}, for any even $n$, we have
    \begin{align*}
        \mathbb{E}[|\hat{\alpha}^{(k)}|^{n}]\leq \frac{1}{(2\pi)^{nd}}\int_{[0,1]^{2n}}\int_{\R^{nd}}e^{-\frac{1}{2}\sum_{j=1}^{d}\xi_{j}^{^\intercal}A\xi_{j}}\prod_{j=1}^{d}\prod_{l=1}^{n}|\xi_{lj}|^{k_{j}}d\xi dsdr.
    \end{align*}
Note that we can apply the same technique as in \Cref{even moment bound} to obtain
\begin{align*}
    \mathbb{E}[|\hat{\alpha}^{(k)}|^{n}]
     &\leq \frac{1}{(2\pi)^{nd}}\int_{[0,1]^{2n}}\det(A)^{-\frac{1}{2}d}\prod_{l=1}^{n}\left(\text{Var}(B_{s_{l}}^{H,1}-B^{H,1}_{r_{l}}|B_{s_{p}}^{H,1}-B^{H,1}_{r_{p}},1\leq p\neq l\leq n)\right)^{-\frac{|k|}{2}}\\
    &\quad\times \int_{\R^{nd}}e^{-\frac{1}{2}\sum_{j=1}^{d}u_{j}^{\intercal}u_{j}}\left|\sum_{m=1}^{d}\sum_{j=1}^{n}u^{2}_{jm}\right|^{\frac{n|k|}{2}}dudsdr.
\end{align*}
Denote 
\begin{align*}
    \Lambda&:=\int_{[0,1]^{2n}}\det(A)^{-\frac{1}{2}d}\prod_{l=1}^{n}\left(\text{Var}(B_{s_{l}}^{H,1}-B^{H,1}_{r_{l}}|B_{s_{p}}^{H,1}-B^{H,1}_{r_{p}},1\leq p\neq l\leq n)\right)^{-\frac{|k|}{2}}dsdr,\\
    \Omega&:=\int_{\R^{nd}}e^{-\frac{1}{2}\sum_{j=1}^{d}u_{j}^{\intercal}u_{j}}\left|\sum_{m=1}^{d}\sum_{j=1}^{n}u^{2}_{jm}\right|^{\frac{n|k|}{2}}du.
\end{align*}
Let $\tau_{l}$ be the closest point from the left to $s_{l}$ taking value from $\{r_{l},r_{l+1},\dots,r_{n}, s_{l-1}\}$, and $\lambda_{l}$ be the closest point from the right to $s_{l}$  taking value from $\{r_{l+1},\dots,r_{n},s_{l+1}\}$ for all $1\leq l\leq n$.
Due to the local nondeterminism \cref{local nondeterminism} and \Cref{Conditional Variance inequality}, when $1\leq l\leq n-1$,
\begin{align*}
    \text{Var}(B_{s_{l}}^{H,1}-B^{H,1}_{r_{l}}|B_{s_{p}}^{H,1}-B^{H,1}_{r_{p}},1\leq p\neq l\leq n)&\geq \text{Var}(B_{s_{l}}^{H,1}|B_{s_{p}}^{H,1},1\leq p\neq l\leq n,B^{H,1}_{r_{m}},1\leq m\leq n)\\
    &\geq \kappa \min((s_{l}-\tau_{l})^{2H}, (\lambda_{l}-s_{l})^{2H}),
\end{align*}
when $l=n$,
\begin{align*}
     \text{Var}(B_{s_{n}}^{H,1}-B^{H,1}_{r_{n}}|B_{s_{p}}^{H,1}-B^{H,1}_{r_{p}},1\leq p < n)\geq \kappa (s_{n}-\tau_{n})^{2H}
\end{align*}
on the set $D^{n}\cap\Delta_{n}$, where $D=\{(r,s);0<r<s<1\}$ and $\Delta_{n}=\{(s_{1},\dots,s_{n})\in [0,1]^{n};0\leq s_{1}\leq\cdots \leq s_{n}\leq 1\}$.
Applying the same technique and \Cref{lemma:Gaussiandeterminant}, we obtain
\begin{align*}
    \det(A)&=\prod_{l=1}^{n}\text{Var}(B_{s_{l}}^{H,1}-B^{H,1}_{r_{l}}|B_{s_{p}}^{H,1}-B^{H,1}_{r_{p}},1\leq p\leq l-1)\\
    &\geq \prod_{l=1}^{n}\text{Var}(B_{s_{l}}^{H,1}|B_{s_{p}}^{H,1},1\leq p\leq l-1, B_{r_{m}}^{H,1},1\leq m\leq l)\\
    &\geq\kappa^{n}\prod_{l=1}^{n}(s_{l}-\tau_{l})^{2H}
\end{align*}
on $D^{n}\cap\Delta_{n}$.
Hence, 
\begin{align*}
    \Lambda&=n!\int_{D^{n}\cap\Delta_{n}}\det(A)^{-\frac{1}{2}d}\prod_{l=1}^{n}\left(\text{Var}(B_{s_{l}}^{H,1}-B^{H,1}_{r_{l}}|B_{s_{p}}^{H,1}-B^{H,1}_{r_{p}},1\leq p\neq l\leq n)\right)^{-\frac{|k|}{2}}drds\\
    &\leq \kappa^{\frac{-dn-d|k|}{2}}(n!)\int_{D^{n}\cap\Delta_{n}}(s_{n}-\tau_{n})^{-|k|H}\prod_{i=1}^{n}(s_{i}-\tau_{i})^{-dH}\prod_{j=1}^{n-1}(\frac{1}{\min((s_{j}-\tau_{j})^{2H},(\lambda_{j}-s_{j})^{2H})})^{\frac{|k|}{2}}drds\\
    &\leq C_{d,|k|,H}^{n}(n!)\int_{D^{n}\cap\Delta_{n}}(s_{n}-\tau_{n})^{-|k|H}\prod_{i=1}^{n}(s_{i}-\tau_{i})^{-dH}\prod_{j=1}^{n-1}((s_{j}-\tau_{j})^{-|k|H}+(\lambda_{j}-s_{j})^{-|k|H})drds,
\end{align*}
where the first equality follows from the symmetry of the integrand, and $C_{d,|k|,H}$ denotes a positive constant depending on $d$, $|k|$, and $H$, whose value may vary from line to line. Consider one configuration of $E=\{0<z_1<\cdots<z_{2n}\}\subset D^{n}\cap\Delta_{n}$, there must exist a mapping 
\begin{align*}
    \sigma : \{1,\dots,n\}\to \{1,\dots,2n\}
\end{align*}
such that $z_{\sigma_{(i)}}=s_{i}$. As such, when $H|k|+Hd<1$,
\begin{align*}
&\int_{E}(s_{n}-\tau_{n})^{-|k|H}\prod_{i=1}^{n}(s_{i}-\tau_{i})^{-dH}\prod_{j=1}^{n-1}((s_{j}-\tau_{j})^{-|k|H}+(\lambda_{j}-s_{j})^{-|k|H})drds\\
&=\int_{E}(z_{\sigma(n)}-z_{\sigma(n)-1})^{-|k|H}\prod_{i=1}^{n}(z_{\sigma(i)}-z_{\sigma(i)-1})^{-dH}\prod_{j=1}^{n-1}((z_{\sigma(j)}-z_{\sigma(j)-1})^{-|k|H}+(z_{\sigma(j)+1}-z_{\sigma(j)})^{-|k|H})dz\\
&=\int_{E}(z_{\sigma(n)}-z_{\sigma(n)-1})^{-|k|H}\prod_{i=1}^{n}(z_{\sigma(i)}-z_{\sigma(i)-1})^{-dH}\\
&\quad\times\sum_{J\in 2^{\{1,\dots,n-1\}}}\prod_{j\in J}(z_{\sigma(j)}-z_{\sigma(j)-1})^{-|k|H}\prod_{j\in J^{c}}(z_{\sigma(j)+1}-z_{\sigma(j)})^{-|k|H}dz\\
&\leq 2^{n-1}\frac{c^{2n}}{\Gamma(-|k|Hn-dHn+2n+1)}\\
&\leq C^{n}(n!)^{H|k|+Hd-2},
\end{align*}
where $C$ is a positive constant independent of $H$, $|k|$, and $d$.  
We have used \Cref{Beta function} in the second preceding inequality, and applied Stirling’s estimate together with \Cref{lmag} in the last inequality.  

As we can see, the bound does not depend on the choice of $E$.  
In fact, there are $(2n-1)!!$ possible choices of $E$, since the $r_{i}$'s can be placed sequentially as follows: $r_{1}$ can only be placed in $(0,s_{1})$; $r_{2}$ can be placed in $(0,r_{1})$, $(r_{1},s_{1})$, or $(s_{1},s_{2})$; and so on. Therefore, we have 
\begin{align*}
    \Lambda &\leq C_{d,|k|,H}^{n}(n!)((2n-1)!!)(n!)^{k|H|+dH-2}\\
    &\leq C_{d,|k|,H}^{n}(n!)^{|k|H+dH},
\end{align*}
where we have used $(2n-1)!!=\frac{2n!}{2^{n}n!}$ and Stirling estimate for it in the second inequality. According to \cref{Omega},
\begin{align*}
    \mathbb{E}[|\hat{\alpha}^{(k)}|^{n}] &\leq  \frac{1}{(2\pi)^{nd}}\Lambda\Omega\\
    &\leq C_{d,|k|,H}^{n}(n!)^{|k|+H|k|+Hd}.
\end{align*}
Using the same argument in the proof as in \Cref{theorem of EI of DILT of fbm}, when $n$ is odd,
\begin{align*}
     \mathbb{E}[|\hat{\alpha}^{(k)}|^{n}] \leq C_{d,|k|,H}^{n}(n!)^{|k|+H|k|+Hd}.
\end{align*}
 Hence for $0\leq\beta<\frac{1}{|k|+|k|H+Hd}$ and $n\in\mathbb{N}$, we have
\begin{align*}
    \mathbb{E}\left[|\hat{\alpha}^{(k)}|^{\beta n}\right]\leq \mathbb{E}\left[|\hat{\alpha}^{(k)}|^{n}\right]^\beta\leq C_{H,d,|k|}^{n\beta}(n!)^{\beta(|k|+|k|H+Hd)}.
\end{align*}
Thus,
\begin{align*}  
    \mathbb{E}\left[e^{M|\hat{\alpha}^{(k)}|^{\beta}}\right] = \sum_{n=0}^{\infty}\frac{M^{n}\mathbb{E}[|\hat{\alpha}^{(k)}|^{\beta n}]}{n!} 
    \leq \sum_{n=0}^{\infty}M^{n}C_{H,d,|k|}^{n}(n!)^{\beta(|k|+|k|H+Hd)-1} 
    < \infty
\end{align*}
for all $M>0$.
\end{proof}

\section{Proof of \Cref{thm1,thm2}}

We now proof the central limit theorems, \Cref{thm1,thm2}. We remind the reader that required preliminaries on Malliavin calculus can be found in \Cref{malliavin appendix} below. The proof will require a series of calculations, which we have organized into lemmas, before we begin the proof proper.

\begin{lemma}
\label{lemm1}
When $d=2$, $\frac{1}{2}<H<1$, we have
\begin{align*}
    \lim_{\epsilon\to 0}\mathbb{E}[\epsilon^{4-\frac{2}{H}}|\hat{\alpha}^{(1)}_{t,\epsilon}|^{2}]=\sigma_{1}^{2},
\end{align*}
where $\sigma_{1}^{2}$ is defined in \Cref{thm1}.
\end{lemma}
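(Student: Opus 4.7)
The plan is to compute $\mathbb{E}[|\hat{\alpha}^{(1)}_{t,\epsilon}|^{2}]$ in closed form via the Fourier representation, identify the dominant scaling regime, and extract the limit from a single Taylor expansion of the covariance $\nu$ near the diagonal.

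Taking $k=(1,0)$ without loss of generality (the case $k=(0,1)$ is identical by symmetry of the coordinates of $B^{H}$), I would substitute the Fourier representation of $\delta^{(1)}_{\epsilon}$ into the second moment and perform the inner Gaussian integral over $(p,q)\in\mathbb{R}^{4}$, exploiting independence of the two components of $B^{H}$. Writing $\mu = (s-r)^{2H}$, $\mu' = (s'-r')^{2H}$, and $\nu = \mathrm{Cov}(B^{H,1}_{s}-B^{H,1}_{r},\,B^{H,1}_{s'}-B^{H,1}_{r'})$, one obtains
\[
\mathbb{E}[|\hat{\alpha}^{(1)}_{t,\epsilon}|^{2}] = \frac{1}{4\pi^{2}}\int_{D^{2}} \frac{\nu}{\bigl[(\mu+\epsilon)(\mu'+\epsilon)-\nu^{2}\bigr]^{2}}\, dr\, ds\, dr'\, ds'.
\]
I would then change variables $(r,s,r',s')\mapsto(r,u,x,v)$ via $u = s-r$, $x = r'-r$, $v = s'-r'$ (unit Jacobian); the integrand depends only on $(u,v,x)$.

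The key scaling is $u = \epsilon^{1/(2H)}\tilde{u}$, $v = \epsilon^{1/(2H)}\tilde{v}$, with $|x|$ bounded away from $0$. A Taylor expansion of
\[
\nu = \tfrac{1}{2}\bigl(|u-x|^{2H}+|x+v|^{2H}-|u-x-v|^{2H}-|x|^{2H}\bigr)
\]
at $u=v=0$ shows that the constant and first-order terms cancel identically, giving
\[
\nu = H(2H-1)\,|x|^{2H-2}\,uv + O\!\bigl(|x|^{2H-3}(u^{3}+v^{3})\bigr).
\]
Simultaneously, since $\nu^{2} = O(\epsilon^{2/H})$ is dwarfed by $\epsilon^{2}$ when $1/2<H<1$, the denominator satisfies $(\mu+\epsilon)(\mu'+\epsilon)-\nu^{2} = \epsilon^{2}(1+\tilde{u}^{2H})(1+\tilde{v}^{2H})(1+o(1))$. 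With $du\,dv = \epsilon^{1/H}d\tilde{u}\,d\tilde{v}$, the scaled integrand produces exactly the exponent $\epsilon^{4-2/H}$, and dominated convergence yields
\[
\lim_{\epsilon\to 0}\epsilon^{4-2/H}\mathbb{E}[|\hat{\alpha}^{(1)}_{t,\epsilon}|^{2}] = \frac{H(2H-1)}{4\pi^{2}}\biggl(\int_{0}^{\infty}\frac{\tilde{u}\,d\tilde{u}}{(1+\tilde{u}^{2H})^{2}}\biggr)^{\!2}\int_{0}^{t}\!\!\int_{-r}^{t-r}|x|^{2H-2}\,dx\,dr.
\]
The remaining integrals are elementary: the substitution $y = \tilde{u}^{2H}$ gives $\int_{0}^{\infty}\tilde{u}(1+\tilde{u}^{2H})^{-2}d\tilde{u} = \tfrac{1}{2H}B\!\bigl(\tfrac{1}{H},\tfrac{4H-2}{2H}\bigr)$, and direct computation gives $\int_{0}^{t}\!\int_{-r}^{t-r}|x|^{2H-2}dx\,dr = t^{2H}/[H(2H-1)]$. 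Combining these with the identity $(2H-1)B(2,2H-1) = 1/(2H)$, which follows from $\Gamma(2H+1) = 2H(2H-1)\Gamma(2H-1)$, recovers exactly $\sigma_{1}^{2}$.

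The main obstacle is the error analysis, for which I would introduce a cutoff $\delta = \epsilon^{(1-\eta)/(2H)}$ in $|x|$ for some small $\eta>0$ and show separately that: (i) the region $|x|\lesssim\delta$, where the Taylor expansion breaks down, contributes only $o(\epsilon^{2/H-4})$ by virtue of the local integrability of $|x|^{2H-2}$ when $H>1/2$; (ii) the region where $u\vee v$ is of order $1$ contributes $O(1)$ and is therefore absorbed into $o(\epsilon^{2/H-4})$ after multiplying by $\epsilon^{4-2/H}\to 0$; and (iii) the higher-order Taylor remainders are subleading by a factor of $\epsilon^{1/(2H)}/|x|$ inside the scaled integrand. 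All three bounds follow from standard dominated-convergence arguments once the explicit rational form of the Gaussian integral in (i) is in hand.
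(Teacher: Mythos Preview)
Your closed-form second moment and the identification of the dominant regime (small $u,v$, gap $|x|$ of order one) are correct, and your limit computation coincides with the paper's treatment of the disjoint configuration $D_{3}=\{r<s<r'<s'\}$, carried out in slightly different coordinates. The issue is your error step (ii): the claim that the region where $u\vee v$ is of order one contributes $O(1)$ is false. Once $u$ (or $v$) is bounded away from zero, the condition $|x|>\delta$ no longer prevents the intervals $(r,s)$ and $(r',s')$ from overlapping, and on the overlap configurations the determinant $(\mu+\epsilon)(\mu'+\epsilon)-\nu^{2}$ can be arbitrarily small even at $\epsilon=0$. For instance, in the nested case $r<r'<s'<s$ one has $\lambda\rho-\nu^{2}\asymp b^{2H}(a^{2H}+c^{2H})$ with $a=r'-r$, $b=s'-r'$, $c=s-s'$; since $\nu\asymp b^{2H}$ is of order one there, the integral of $\nu/\det^{2}$ over small $a,c$ behaves like $\int (a^{2H}+c^{2H})^{-2}\,da\,dc$, which diverges for every $H>1/2$. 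Thus these regions contribute quantities that blow up as $\epsilon\to0$, not $O(1)$.

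The paper resolves this by decomposing $D^{2}$ according to the ordering of the four endpoints and invoking the configuration-specific lower bounds of \Cref{Bounds} on $\lambda\rho-\nu^{2}$; these yield $V_{1}(\epsilon)=O(\epsilon^{1/H-2})$ and $V_{2}(\epsilon)=O(\epsilon^{3/(2H)-3})$, both diverging but strictly slower than $\epsilon^{2/H-4}$, so that $\epsilon^{4-2/H}V_{i}(\epsilon)\to0$ for $i=1,2$. Your single coordinate system and $|x|$-cutoff cannot replace this case analysis, because the form of the determinant lower bound changes across the three orderings. Once you import those bounds for the overlapping configurations, the rest of your argument goes through.
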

\begin{proof}
    According to Lemma 3.1 in \citep{yu2024limit}, 
    \begin{align*}
        \mathbb{E}[|\hat{\alpha}^{(1)}_{t,\epsilon}|^{2}]=V_{1}(\epsilon)+V_{2}(\epsilon)+V_{3}(\epsilon)
    \end{align*}
    with 
    \begin{align*}
        V_{i}(\epsilon)=\frac{2}{(2\pi)^{2}}\int_{D_{i}}|\epsilon I+\Sigma|^{-2}|\mu|drdsdr^{'}ds^{'},
    \end{align*}
    where $D_{i}$ defined in \Cref{Bounds} and $\Sigma$ is a covariance matrix with $\Sigma_{1,1}=\lambda$ , $\Sigma_{2,2}=\rho$ and $\Sigma_{1,2}=\mu$ given in \Cref{Bounds}. For the $V_{1}(\epsilon)$ term, changing variables $(r,r^{'},s,s^{'})$ by ($r$, $r^{'}-r=a$, $s-r^{'}=b$, $s^{'}-s=c$), there exists some $C>0$ such that
    \begin{align*}
        V_{1}(\epsilon)&\leq C\int_{[0,t]^{4}}|\epsilon I+\Sigma|^{-2}|\mu|drdadbdc \\
        &\leq C\int_{[0,t]^{3}}|\epsilon I+\Sigma|^{-2}|\mu|dadbdc.
    \end{align*}
    \begin{remark}
        For te rest of this article, the constant $C$ may differ from equation to equation or from line to line, but it does not affect the calculation thereafter. The subscripts of $C$ indicate what the constant depends on.
    \end{remark}
    Applying \Cref{Bounds}, there exists some constant $C>0$ such that 
    \begin{align*}
        |\epsilon I+\Sigma|&=(\epsilon+\Sigma_{1,1})(\epsilon+\Sigma_{2,2})-\Sigma_{1,2}^{2}\\
        &\geq C \left[\epsilon^{2}+\epsilon((a+b)^{2H}+(b+c)^{2H})+a^{2H}(c+b)^{2H}+c^{2H}(a+b)^{2H} \right]\\
        &\geq C[\epsilon^{2}+(a+b)^{H}(b+c)^{H}(\epsilon+a^{H}c^{H})]\\
        &\geq C (a+b)^{H}(b+c)^{H}(\epsilon+a^{H}c^{H}),
    \end{align*}
    where we use the Young's inequality in the second to last inequality. Note that 
    \begin{align*}
        |\mu|<\sqrt{\lambda\rho}=(a+b)^{H}(b+c)^{H}. 
    \end{align*}
    As such, when $\frac{1}{2}<H<1$,we have 
    \begin{align}
    \label{V_{1}}
    \begin{split}
        \limsup_{\epsilon \to 0}\frac{{V_{1}(\epsilon)}}{\epsilon^{\frac{2}{H}-4}}&\leq \limsup_{\epsilon \to 0} C \epsilon^{4-\frac{2}{H}} \int_{[0,t]^{3}}(a+b)^{-H}(b+c)^{-H}(\epsilon+a^{H}c^{H})^{-2}dadbdc \\
        &\leq \limsup_{\epsilon \to 0} C_{t} \epsilon^{4-\frac{2}{H}} \int_{[0,t]^{3}}b^{-H}c^{-H}(\epsilon+a^{H}c^{H})^{-2}dadbdc\\
        &\leq \limsup_{\epsilon \to 0} C_{t} \epsilon^{4-\frac{2}{H}} \epsilon^{\frac{1}{H}-2}\int_{[0,t\epsilon^{-\frac{1}{H}}]\times [0,t]}c^{-H}(1+u^{H}c^{H})^{-2}dudc\\
        &\leq \limsup_{\epsilon \to 0} C_{t,H} \epsilon^{2-\frac{1}{H}}\\
        &=0,
    \end{split}
    \end{align}
    where we change variables $a$  by $u\epsilon^{\frac{1}{H}}$. For the term $V_{2}(\epsilon)$, changing variables $(r,r^{'},s,s^{'})$ by ($r$, $r^{'}-r=a$, $s^{'}-r^{'}=b$, $s-s^{'}=c$), there exists some $C>0$ such that
    \begin{align*}
        V_{2}(\epsilon)\leq C\int_{[0,t]^{3}}|\epsilon I+\Sigma|^{-\frac{d}{2}-1}|\mu|dadbdc.
    \end{align*}
Note that 
\begin{align*}
    |\mu|&=\frac{1}{2}((a+b)^{2H}+(b+c)^{2H}-a^{2H}-c^{2H})\\
    &=Hb\int_{0}^{1}((a+bv)^{2H-1}+(c+bv)^{2H-1})dv\\
    &\leq 2Hb(\min(a,c))^{2H-1}\\
    &\leq 2Hb(a^{2H-1}+c^{2H-1}).
\end{align*}
According to \Cref{Bounds}, 
\begin{align*}
    |\epsilon I+\Sigma|&\geq \epsilon^{2}+\epsilon((a+b+c)^{2H}+b^{2H})+K_{2}b^{2H}(a^{2H}+c^{2H})\\
    &\geq C(\epsilon^{2}+\epsilon((a+b+c)^{2H}+b^{2H})+b^{2H}(a^{2H}+c^{2H}))\\
    &\geq  C(\epsilon^{2}+\epsilon((a+c)^{2H}+b^{2H})+b^{2H}(a+c)^{2H})\\
    &=C(\epsilon+(a+b)^{2H})(\epsilon+b^{2H}),
\end{align*}
where $C=\min(1,K_{2})$ in the second inequality.
Hence, we have
\begin{align*}
    V_{2}(\epsilon)&\leq C\int_{[0,t]^{3}}b(a^{2H-1}+c^{2H-1})(\epsilon+(a+b)^{2H})^{-2}(\epsilon+b^{2H})^{-2}dadbdc\\
    &= C\epsilon^{\frac{3}{2H}-3}\int_{[0,\epsilon^{-\frac{1}{2H}}t]^{3}}\frac{b}{(1+b^{2H})^{2}}\frac{a^{2H-1}+c^{2H-1}}{(1+(a+b)^{2H})^{2}}dadbdc\\
    &\leq C_{t,H}\epsilon^{\frac{3}{2H}-3},
\end{align*}
where we change variables $(a,b,c)$ by $(\epsilon^{\frac{1}{2H}}a,\epsilon^{\frac{1}{2H}}b,\epsilon^{\frac{1}{2H}}c)$ in the second inequality.
Hence, when $\frac{1}{2}<H<1$, 
\begin{align}
\label{V_{2}}
    \lim_{\epsilon\to 0}\epsilon^{4-\frac{2}{H}}V_{2}(\epsilon)=0.
\end{align}
Now we deal with term $V_{3}(\epsilon)$. Note that 
\begin{align*}
    V_{3}(\epsilon)&=\frac{2}{(2\pi)^{2}}\int_{[0,t]^{3}}1_{[0,t]}(a+b+c)(t-a-b-c)|\epsilon I+\Sigma|^{-2}|\mu|dadbdc\\
    &=\frac{2}{(2\pi)^{2}}\epsilon^{\frac{2}{H}-4}\int_{[0,\infty)^{3}}1_{[0,t]}(b+\epsilon^{\frac{1}{2H}}(a+c))(t-b-\epsilon^{\frac{1}{2H}}(a+c))\frac{\epsilon^{-\frac{1}{H}}\mu_{\epsilon}}{((1+a^{2H})(1+c^{2H})-\epsilon^{-2}\mu_{\epsilon}^{2})^{2}}dadbdc,
\end{align*}
where we change the variables $(a,b,c)$ by $(\epsilon^{\frac{1}{2H}}a,b,\epsilon^{\frac{1}{2H}}c)$ in the last equality, and
\begin{align*}
    \mu_{\epsilon}&=\frac{1}{2}|(b+\epsilon^{\frac{1}{2H}}a+\epsilon^{\frac{1}{2H}}c)^{2H}+b^{2H}-(\epsilon^{\frac{1}{2H}}a+b)^{2H}-(\epsilon^{\frac{1}{2H}}c+b)^{2H}|\\
    &=H(2H-1)\epsilon^{\frac{1}{H}}ac\int_{[0,1]^{2}}(b+\epsilon^{\frac{1}{2H}}av_1+\epsilon^{\frac{1}{2H}}cv_{2})^{2H-2}dv_{1}dv_{2}.
\end{align*}
The integration region has been changed from $[0,t]$ to $[0,\infty)$ since $\{(a,b,c); 0\leq a+b+c\leq t\}\subset [0,t]^{3}\subset [0,\infty)^{3}.$
Denote by
\begin{align*}
    \Phi_{\epsilon}:=\frac{\mu_{\epsilon}1_{[0,t]}(b+\epsilon^{\frac{1}{2H}}(a+c))(t-b-\epsilon^{\frac{1}{2H}}(a+c))\epsilon^{-\frac{1}{H}}}{[(1+a^{2H})(1+c^{2H})-\epsilon^{-2}\mu_{\epsilon}^{2}]^{2}}.
\end{align*}
Note that, when $\frac{1}{2}<H<1$,
\begin{align*}
    \lim_{\epsilon\to 0}\mu_{\epsilon}\epsilon^{-\frac{1}{H}}&=H(2H-1)acb^{2H-2}\\
    \lim_{\epsilon\to 0}\mu_{\epsilon}^{2}\epsilon^{-2}&=0.
\end{align*}
As such, 
\begin{align}
\label{integrand}
    \lim_{\epsilon\to 0}\Phi_{\epsilon}=\frac{1_{[0,t]}(b)H(2H-1)acb^{2H-2}(t-b)}{[(1+a^{2H})(1+c^{2H})]^{2}}.
\end{align}
Denote 
\begin{align*}
    \hat{V}_{3}&=\frac{2}{(2\pi)^{2}}\int_{[0,\infty)^{3}}\frac{1_{[0,t]}(b)H(2H-1)acb^{2H-2}(t-b)}{[(1+a^{2H})(1+c^{2H})]^{2}}dadbdc\\
    &=\frac{2}{(2\pi)^{2}}\int_{[0,\infty)^{2}}\frac{H(2H-1)ac}{[(1+a^{2H})(1+c^{2H})]^{2}}dadc\int_{[0,1]}t^{2H}b^{2H-2}(1-b)db,
\end{align*}
where we change $b$ by $tb$ in the second equality. By \Cref{betafunction},
\begin{align*}
    \hat{V}_{3}=\sigma_{1}^{2}.
\end{align*} 
If $\Phi_{\epsilon}$ is bounded by an integrable function in $\mathbb{R}_{+}^{3}$, by dominated convergence theorem, we have
\begin{align}
    \label{V_{3}}
    \lim_{\epsilon\to 0}\epsilon^{4-\frac{2}{H}}V_{3}(\epsilon)=\hat{V}_{3}.
\end{align}
Utilising \cref{integrand}, there exists a positive constant C depending only on $H$ and $t$ such that,
\begin{align*}
    \Phi_{\epsilon}\leq C\frac{acb^{2H-2}}{(1+a^{2H})^{2}(1+c^{2H})^{2}}.
\end{align*}
Obviously the expression on the right hand side is an integrable function in $\mathbb{R}_{+}^{3}$ given $\frac{1}{2}<H<1$.
Combining \cref{V_{1}},\cref{V_{2}} and \cref{V_{3}}, we have 
\begin{align*}
     \lim_{\epsilon\to 0}\mathbb{E}[\epsilon^{4-\frac{2}{H}}|\hat{\alpha}^{(1)}_{t,\epsilon}|^{2}]=\sigma_{1}^{2}.
\end{align*}
\end{proof}
\begin{lemma}
\label{lemm2}
When $d=3$, $\frac{1}{2}<H<\frac{2}{3}$, we have
\begin{align*}
    \lim_{\epsilon\to 0}\mathbb{E}[\epsilon^{5-\frac{2}{H}}|\hat{\alpha}^{(1)}_{t,\epsilon}|^{2}]=\sigma_{2}^{2},
\end{align*}
where $\sigma_{2}^{2}$ is defined in \Cref{thm2}.
\end{lemma}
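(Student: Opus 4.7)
The plan is to follow the same scheme as in \Cref{lemm1}, but adapted to the three--dimensional setting. Since $d=3$, the Gaussian density of the vector $(B^{H}_{s}-B^{H}_{r},B^{H}_{s'}-B^{H}_{r'})$ produces the determinantal factor $|\epsilon I+\Sigma|^{-d/2-1}=|\epsilon I+\Sigma|^{-5/2}$ (instead of the exponent $-2$ that appeared in $d=2$), and the $(2\pi)^{-2}$ prefactor in $V_{i}$ gets replaced by $(2\pi)^{-3}$. Invoking again Lemma~3.1 of \citep{yu2024limit}, we decompose
\[
    \mathbb{E}\bigl[|\hat{\alpha}^{(1)}_{t,\epsilon}|^{2}\bigr]=V_{1}(\epsilon)+V_{2}(\epsilon)+V_{3}(\epsilon),
    \qquad
    V_{i}(\epsilon)=\frac{2}{(2\pi)^{3}}\int_{D_{i}}|\epsilon I+\Sigma|^{-5/2}|\mu|\,drdsdr'ds',
\]
where $D_{i}$, $\lambda$, $\rho$ and $\mu$ are as in \Cref{Bounds}.

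For the off-diagonal terms $V_{1}$ and $V_{2}$, the strategy is exactly as in \Cref{lemm1}: use $|\mu|\leq\sqrt{\lambda\rho}$ together with the lower bounds of \Cref{Bounds} on $|\epsilon I+\Sigma|$, perform the changes of variables $(r,r'-r,s-r',s'-s)$ in $V_{1}$ and $(r,r'-r,s'-r',s-s')$ in $V_{2}$, and then rescale the infinitesimal variables by an appropriate power of $\epsilon^{1/H}$ or $\epsilon^{1/(2H)}$. Because the exponent on the determinant has increased from $2$ to $5/2$, the resulting scaling produces at worst $\epsilon^{\,2/H-5+\eta}$ for some $\eta>0$ when $\tfrac{1}{2}<H<\tfrac{2}{3}$, and hence $\epsilon^{5-2/H}V_{1}(\epsilon)\to 0$ and $\epsilon^{5-2/H}V_{2}(\epsilon)\to 0$.

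The main contribution, and the delicate step, is the diagonal term $V_{3}$. Changing variables by $(a,b,c)\mapsto(\epsilon^{1/(2H)}a,b,\epsilon^{1/(2H)}c)$, one obtains
\[
    V_{3}(\epsilon)=\frac{2}{(2\pi)^{3}}\,\epsilon^{\,2/H-5}\!\int_{[0,\infty)^{3}}\!
    \Phi_{\epsilon}(a,b,c)\,dadbdc,
\]
where $\Phi_{\epsilon}$ is the natural analogue (with the exponent $5/2$ on the bracket) of the integrand appearing in \Cref{lemm1}. A second-order Taylor expansion gives $\mu_{\epsilon}\epsilon^{-1/H}\to H(2H-1)ac\,b^{2H-2}$ and $\epsilon^{-2}\mu_{\epsilon}^{2}\to 0$, so
\[
    \lim_{\epsilon\to 0}\Phi_{\epsilon}(a,b,c)=\frac{\mathbf{1}_{[0,t]}(b)\,H(2H-1)\,ac\,b^{2H-2}(t-b)}{\bigl[(1+a^{2H})(1+c^{2H})\bigr]^{5/2}}.
\]
The main obstacle is producing an integrable dominating function for $\Phi_{\epsilon}$ on $\mathbb{R}_{+}^{3}$. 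This is the point where the hypothesis $\tfrac{1}{2}<H<\tfrac{2}{3}$ enters sharply: $H>\tfrac{1}{2}$ gives integrability of $b^{2H-2}$ near $0$, while $H<\tfrac{2}{3}$ (together with the strengthened exponent $5/2$) ensures that $\int_{0}^{\infty}a/(1+a^{2H})^{5/2}da$ converges, via the substitution $u=a^{2H}$, to $\tfrac{1}{2H}B\!\left(\tfrac{1}{H},\tfrac{5H-2}{2H}\right)$.

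Granted the majorant, dominated convergence together with the evaluation $\int_{0}^{1}b^{2H-2}(1-b)db=B(2H-1,2)$ and the computation above gives
\[
    \lim_{\epsilon\to 0}\epsilon^{5-2/H}V_{3}(\epsilon)
    =\frac{2}{(2\pi)^{3}}\cdot H(2H-1)\,t^{2H}B(2H-1,2)\cdot\frac{1}{4H^{2}}B\!\left(\tfrac{1}{H},\tfrac{5H-2}{2H}\right)^{2}
    =\sigma_{2}^{2},
\]
after simplification of the constants $2/[(2\pi)^{3}4H^{2}]\cdot H(2H-1)=(2H-1)/(16H\pi^{3})$. Combining this with the vanishing of $\epsilon^{5-2/H}V_{1}(\epsilon)$ and $\epsilon^{5-2/H}V_{2}(\epsilon)$ finishes the proof.
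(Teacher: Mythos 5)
Your proposal is correct and follows exactly the route the paper intends: the paper's own proof of this lemma is literally the one-line remark ``Similar to \Cref{lemm1},'' and your sketch is the faithful $d=3$ adaptation, with the right exponent $-5/2$ on $|\epsilon I+\Sigma|$, the right scaling $\epsilon^{2/H-5}$, and a constant that simplifies correctly to $\sigma_2^2$. One small correction: the hypothesis $H<\tfrac{2}{3}$ is not what makes $\int_0^\infty a(1+a^{2H})^{-5/2}\,da$ converge (that integral converges for all $H>\tfrac{2}{5}$, by \Cref{betafunction} with $1+\alpha+\gamma\beta=2-5H<0$); rather, $H<\tfrac{2}{3}$ is needed in the $V_1$ estimate, where the bound $|\epsilon I+\Sigma|^{-5/2}|\mu|\leq C(a+b)^{-3H/2}(b+c)^{-3H/2}(\epsilon+a^Hc^H)^{-5/2}$ forces integrating $b^{-3H/2}$ near the origin.
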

\begin{proof}
    Similar to \Cref{lemm1}.
\end{proof}
\begin{lemma}
\label{lemma3}
    When $d=2$, $\frac{1}{2}<H<1$, we have 
    \begin{align*}
        \lim_{\epsilon\to 0}\mathbb{E}[\epsilon^{4-\frac{2}{H}}|I_{1}(f^{(1)}_{1,t,\epsilon})|^{2}]=\sigma_{1}^{2},
    \end{align*}
    where $I_{1}(f^{(1)}_{1,t,\epsilon})$ is the first chaos of $\alpha_{t,\epsilon}^{(1)}$ and $f^{(1)}_{1,t,\epsilon}$ is defined in \Cref{explict form of f}.
    \begin{proof}
        Since $I_{1}(\cdot)$ is an isometry from $\mathcal{H}^{2}$ to $L^{2}(\Omega)$, we have
        \begin{align*}
            \mathbb{E}[|I_{1}(f_{1,t,\epsilon}^{(1)})|^{2}]&=||f_{1,t,\epsilon}^{(1)}||_{\mathcal{H}^{2}}^{2}\\
            &=\sum_{i=1}^{3}\tilde{V}_{i}(\epsilon),
        \end{align*}
        where
        \begin{align*}
            \tilde{V}_{i}(\epsilon)=\frac{2}{(2\pi)^{2}}\int_{D_{i}}\frac{\langle1_{[r,s]},1_{[r^{'},s^{'}]}\rangle_{\mathcal{H}}}{((s-r)^{2H}+\epsilon)^{2}((s^{'}-r^{'})^{2H}+\epsilon)^{2}}drdsdr^{'}ds^{'},
        \end{align*}
        and $D_{i}, 1\leq i\leq 3$ are regions defined in \Cref{Bounds}. $\tilde{V}_{3}(\epsilon)$ will be dealt with first as we will see later it is the dominant term.  
        Utilising the fact that 
        \begin{align*}
            \langle1_{[r_{1},s_{1}]},1_{[r_{2},s_{2}]}\rangle_{\mathcal{H}}=\mathbb{E}[(B^{H,1}_{s_{1}}-B^{H,1}_{r_{1}})(B^{H,1}_{s_{2}}-B^{H,1}_{r_{2}})],
        \end{align*}
        we have
        \begin{align*}
            \tilde{V}_{3}(\epsilon)=\frac{2}{(2\pi)^{2}}\int_{D_{3}}\frac{\mathbb{E}[(B^{H,1}_{s}-B^{H,1}_{r})(B^{H,1}_{s^{'}}-B^{H,1}_{r^{'}})]}{((s-r)^{2H}+\epsilon)^{2}((s^{'}-r^{'})^{2H}+\epsilon)^{2}}drdsdr^{
            '}ds^{'}.
        \end{align*}
        According to \Cref{Bounds}, 
        \begin{align*}
            \tilde{V}_{3}(\epsilon)=\frac{2}{(2\pi)^{2}}\int_{[0,t]^{3}}\frac{1_{[0,t]}(a+b+c)(t-a-b-c)\frac{1}{2}((a+b+c)^{2H}+b^{2H}-(a+b)^{2H}-(c+b)^{2H})}{(\epsilon+a^{2H})^{2}(\epsilon+c^{2H})^{2}}dadbdc.
        \end{align*}
        Change variables $(a,b,c)$ by $(\epsilon^{\frac{1}{2H}}a,b,\epsilon^{\frac{1}{2H}}c)$, we have
        \begin{align*}
              \tilde{V}_{3}(\epsilon)=\frac{2}{(2\pi)^{2}}\epsilon^{\frac{2}{H}-4}\int_{[0,\infty]^{3}}1_{[0,t]}(b+\epsilon^{\frac{1}{2H}}(a+c))(t-b-\epsilon^{\frac{1}{2H}}(a+c))\frac{\epsilon^{-\frac{1}{H}}\mu_{\epsilon}}{(1+a^{2H})^{2}(1+c^{2H})^{2}}dadbdc,
        \end{align*}
        where 
        \begin{align*}
              \mu_{\epsilon}&=\frac{1}{2}|(b+\epsilon^{\frac{1}{2H}}a+\epsilon^{\frac{1}{2H}}c)^{2H}+b^{2H}-(\epsilon^{\frac{1}{2H}}a+b)^{2H}-(\epsilon^{\frac{1}{2H}}c+b)^{2H}|\\
    &=H(2H-1)\epsilon^{\frac{1}{H}}ac\int_{[0,1]^{2}}(b+\epsilon^{\frac{1}{2H}}av_1+\epsilon^{\frac{1}{2H}}cv_{2})^{2H-2}dv_{1}dv_{2}.
        \end{align*}
        Clearly,
        \begin{align*}
            \lim_{\epsilon\to 0}\epsilon^{-\frac{1}{H}}\mu_{\epsilon}&=H(2H-1)acb^{2H-2},\\
             \lim_{\epsilon\to 0}\frac{1_{[0,t]}(b+\epsilon^{\frac{1}{2H}}(a+c))(t-b-\epsilon^{\frac{1}{2H}}(a+c))\epsilon^{-\frac{1}{H}}\mu_{\epsilon}}{(1+a^{2H})^{2}(1+c^{2H})^{2}}&=1_{[0,t]}(b)(t-b)\frac{H(2H-1)acb^{2H-2}}{(1+a^{2H})^{2}(1+c^{2H})^{2}}.
        \end{align*}
        Therefore, there exists a positive constant $C$ such that
        \begin{align*}
            1_{[0,t]}(b+\epsilon^{\frac{1}{2H}}(a+c))(t-b-\epsilon^{\frac{1}{2H}}(a+c))\frac{\epsilon^{-\frac{1}{H}}\mu_{\epsilon}}{(1+a^{2H})^{2}(1+c^{2H})^{2}}\leq C\frac{H(2H-1)acb^{2H-2}}{(1+a^{2H})^{2}(1+c^{2H})^{2}},
        \end{align*}
        in which the expression on the right hand side is an integrable function in $\mathbb{R}_{+}^{3}$.
        Hence, by the dominated convergence theorem,
        \begin{align}
        \begin{split}
            \label{tV_{3}}       \lim_{\epsilon\to0}\epsilon^{4-\frac{2}{H}} \tilde{V}_{3}(\epsilon)&=\frac{2}{(2\pi)^{2}}\int_{[0,\infty]^{3}}1_{[0,t]}(b)(t-b)\frac{H(2H-1)acb^{2H-2}}{(1+a^{2H})^{2}(1+c^{2H})^{2}}dadbdc \\
            &=\frac{2H(2H-1)t^{2H}}{(2\pi)^{2}}\int_{[0,\infty]{2}}\frac{ac}{(1+a^{2H})^{2}(1+c^{2H})^{2}}dadc\int_{[0,1]}(1-b)b^{2H-2}db\\
            &=\sigma_{1}^{2},    
        \end{split}
            \end{align}
        where we change variable $b$ by $tb$ in the second equality, and we utilize \Cref{betafunction} in the last equality. According to \Cref{Chaos expansion for L2} and \Cref{explict form of f}, we have  
        \begin{align*}
         \sum_{q=1}^{\infty}\mathbb{E}[|I_{2q-1}(f^{(1)}_{2q-1,t,\epsilon})|^{2}]=   \mathbb{E}[|\hat{\alpha}_{t,\epsilon}^{(1)}|^{2}]=
            V_{1}(\epsilon)+V_{2}(\epsilon)+V_{3}(\epsilon).
        \end{align*}
        This implies 
        \begin{align*}
            \tilde{V}_{1}(\epsilon)+\tilde{V}_{2}(\epsilon)+\tilde{V}_{3}(\epsilon)=\mathbb{E}[|I_{1}(f^{1}_{1,t,\epsilon})|^{2}]\leq V_{1}(\epsilon)+V_{2}(\epsilon)+V_{3}(\epsilon).
        \end{align*}
        As such, we have
        \begin{align*}
            \lim_{\epsilon\to 0}\epsilon^{4-\frac{2}{H}}(\tilde{V_{1}}(\epsilon)+\tilde{V_{2}}(\epsilon)+\tilde{V}_{3}(\epsilon))&\leq  \lim_{\epsilon\to 0}\epsilon^{4-\frac{2}{H}}(V_{1}(\epsilon)+V_{2}(\epsilon)+V_{3}(\epsilon))\\
            \lim_{\epsilon\to 0}\epsilon^{4-\frac{2}{H}}(\tilde{V_{1}}(\epsilon)+\tilde{V_{2}}(\epsilon))&\leq  \lim_{\epsilon\to 0}\epsilon^{4-\frac{2}{H}}(V_{1}(\epsilon)+V_{2}(\epsilon))=0,
        \end{align*}
        where $\tilde{V_{3}}(\epsilon)$ and $V_{3}(\epsilon)$ are cancelled due to the fact that they all converge to $\sigma_{1}^{2}$ by \cref{V_{3}} and \cref{tV_{3}}, and the last equality holds due to \cref{V_{1}} and \cref{V_{2}}.
        Hence 
         \begin{align*}
        \lim_{\epsilon\to 0}\mathbb{E}[\epsilon^{4-\frac{2}{H}}|I_{1}(f^{(1)}_{1,t,\epsilon})|^{2}]=\sigma_{1}^{2},
    \end{align*}
    as required.
    \end{proof}
\end{lemma}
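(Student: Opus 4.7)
The plan is to exploit that $I_{1}$ is an isometry, so that $\mathbb{E}[|I_{1}(f^{(1)}_{1,t,\epsilon})|^{2}] = \|f^{(1)}_{1,t,\epsilon}\|_{\mathcal{H}^{2}}^{2}$, together with the key identity $\langle \mathbf{1}_{[r,s]},\mathbf{1}_{[r',s']}\rangle_{\mathcal{H}} = \mathbb{E}[(B^{H,1}_{s}-B^{H,1}_{r})(B^{H,1}_{s'}-B^{H,1}_{r'})]$, which is precisely the cross-covariance $\mu$ appearing in \Cref{lemm1}. Expanding the squared $\mathcal{H}^{2}$-norm as a fourfold integral over $[0,t]^{4}$ and partitioning along the same three regions $D_{1},D_{2},D_{3}$ as in \Cref{lemm1} would yield a decomposition $\sum_{i=1}^{3}\tilde{V}_{i}(\epsilon)$ parallel to $\sum_{i=1}^{3} V_{i}(\epsilon)$, except that the determinantal factor $|\epsilon I+\Sigma|^{2}$ is replaced by the simpler product $((s-r)^{2H}+\epsilon)^{2}((s'-r')^{2H}+\epsilon)^{2}$.

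I would next focus on the dominant term $\tilde{V}_{3}$ (disjoint intervals). Using the change of variables and the rescaling $(a,c)\mapsto(\epsilon^{1/(2H)}a,\epsilon^{1/(2H)}c)$ from \Cref{lemm1}, the same Taylor expansion of $\mu_{\epsilon}$ gives $\epsilon^{-1/H}\mu_{\epsilon}\to H(2H-1)\,a\,c\,b^{2H-2}$ pointwise, while the rescaled denominator converges to $(1+a^{2H})^{2}(1+c^{2H})^{2}$. I would then exhibit an $\epsilon$-independent integrable envelope of the form $C\,a\,c\,b^{2H-2}/[(1+a^{2H})^{2}(1+c^{2H})^{2}]$, invoke dominated convergence, and recognise the limiting integral as a product of an $(a,c)$-double integral over $[0,\infty)^{2}$ and a one-dimensional $b$-integral over $[0,1]$, both evaluable via \Cref{betafunction} and combining to exactly $\sigma_{1}^{2}$.

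For the sub-dominant terms $\tilde{V}_{1},\tilde{V}_{2}$ I would avoid direct estimation and instead invoke the Wiener-chaos orthogonality in \Cref{Chaos expansion for L2}: since the first chaos is only one term in the expansion of $\hat{\alpha}^{(1)}_{t,\epsilon}$, we have $\mathbb{E}[|I_{1}(f^{(1)}_{1,t,\epsilon})|^{2}]\leq \mathbb{E}[|\hat{\alpha}^{(1)}_{t,\epsilon}|^{2}]$ and hence $\tilde{V}_{1}+\tilde{V}_{2}+\tilde{V}_{3}\leq V_{1}+V_{2}+V_{3}$. Rearranging gives $\tilde{V}_{1}+\tilde{V}_{2}\leq (V_{1}+V_{2})+(V_{3}-\tilde{V}_{3})$. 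Since \Cref{lemm1} provides $\epsilon^{4-2/H}(V_{1}+V_{2})\to 0$, and the previous step yields $\epsilon^{4-2/H}(V_{3}-\tilde{V}_{3})\to 0$, the non-negativity of $\tilde{V}_{1},\tilde{V}_{2}$ forces $\epsilon^{4-2/H}(\tilde{V}_{1}+\tilde{V}_{2})\to 0$, leaving $\sigma_{1}^{2}$ as the full limit.

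The main obstacle I anticipate is verifying the $\epsilon$-uniform dominating envelope for $\tilde{V}_{3}$: the singularity $b^{2H-2}$ at $b=0$ must integrate against the $(t-b)$ factor on $[0,t]$, and the tails in $a,c$ must be tamed by the denominator. Both balance conditions require precisely $\tfrac{1}{2}<H<1$, so the Hurst-range hypothesis enters at exactly this step. Once the domination is in place, the Beta-integral evaluation and the comparison with \Cref{lemm1} are routine bookkeeping.
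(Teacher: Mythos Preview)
Your proposal is correct and follows essentially the same approach as the paper: compute $\tilde V_3$ directly via the rescaling $(a,c)\mapsto(\epsilon^{1/(2H)}a,\epsilon^{1/(2H)}c)$ and dominated convergence to get $\sigma_1^2$, then dispose of $\tilde V_1+\tilde V_2$ by the chaos-orthogonality inequality $\tilde V_1+\tilde V_2+\tilde V_3\le V_1+V_2+V_3$ combined with $\epsilon^{4-2/H}(V_1+V_2)\to 0$ and $\epsilon^{4-2/H}(V_3-\tilde V_3)\to 0$ from \Cref{lemm1}. The only cosmetic difference is that the paper phrases the last step as a cancellation of limits rather than your explicit rearrangement, but the logic is identical.
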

\begin{lemma}
\label{lemma4}
    When $d=2$, $\frac{1}{2}<H<\frac{2}{3}$, we have
    \begin{align*}
        \lim_{\epsilon\to 0}\mathbb{E}[\epsilon^{5-\frac{2}{H}}|I_{1}(f^{(1)}_{1,t,\epsilon})|^{2}]=\sigma_{2}^{2}.
    \end{align*}
\begin{proof}
    Similar to \Cref{lemma3}.
\end{proof}
\end{lemma}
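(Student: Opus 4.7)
The plan is to mirror the proof of \Cref{lemma3} line for line, replacing the scaling exponent and the kernel power there by the ones appropriate to the regime of \Cref{lemm2}. Starting from the first-Wiener-chaos isometry, write
\[
   \mathbb{E}\!\left[|I_{1}(f_{1,t,\epsilon}^{(1)})|^{2}\right] = \|f_{1,t,\epsilon}^{(1)}\|^{2} = \tilde{V}_{1}(\epsilon)+\tilde{V}_{2}(\epsilon)+\tilde{V}_{3}(\epsilon),
\]
where each $\tilde{V}_{i}(\epsilon)$ is obtained by integrating $\langle 1_{[r,s]},1_{[r^{\prime},s^{\prime}]}\rangle_{\mathcal{H}}$ against the same type of kernel as in \Cref{lemma3} (with the appropriate power inherited from $f_{1,t,\epsilon}^{(1)}$ in the present setting) over the region $D_{i}$ of \Cref{Bounds}. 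Identify $\tilde{V}_{3}(\epsilon)$ as the dominant contribution via
\[
   \langle 1_{[r,s]},1_{[r^{\prime},s^{\prime}]}\rangle_{\mathcal{H}} = \mathbb{E}\!\left[(B_{s}^{H,1}-B_{r}^{H,1})(B_{s^{\prime}}^{H,1}-B_{r^{\prime}}^{H,1})\right],
\]
and pass to the gap parametrisation $(a,b,c)$ already used in the proof of \Cref{lemm2}.

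Next, apply the change of variables $(a,c)\mapsto(\epsilon^{1/(2H)}a,\,\epsilon^{1/(2H)}c)$ to pull out exactly the scaling factor $\epsilon^{5-2/H}$. After this rescaling the covariance $\mu_{\epsilon}$ admits the Taylor expansion used in \Cref{lemm2}, so the rescaled integrand converges pointwise to a limit proportional to
\[
   1_{[0,t]}(b)\,(t-b)\,a\,c\,b^{2H-2}\Big/\bigl[(1+a^{2H})(1+c^{2H})\bigr]^{\kappa},
\]
with $\kappa$ the kernel exponent inherited from $f_{1,t,\epsilon}^{(1)}$, while being dominated by a constant multiple of the same expression. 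The resulting dominator is integrable on $\mathbb{R}_{+}^{3}$ precisely under the restriction $1/2<H<2/3$, and dominated convergence together with \Cref{betafunction} evaluates the limit as $\sigma_{2}^{2}$, with the two Beta factors coming respectively from the separated $b$-integration and the $(a,c)$-integration.

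Finally, dispose of $\tilde{V}_{1}$ and $\tilde{V}_{2}$ by the orthogonality trick used at the end of \Cref{lemma3}. By \Cref{Chaos expansion for L2},
\[
   \tilde{V}_{1}(\epsilon)+\tilde{V}_{2}(\epsilon)+\tilde{V}_{3}(\epsilon)=\mathbb{E}\!\left[|I_{1}(f_{1,t,\epsilon}^{(1)})|^{2}\right]\le \mathbb{E}\!\left[|\hat{\alpha}^{(1)}_{t,\epsilon}|^{2}\right]=V_{1}(\epsilon)+V_{2}(\epsilon)+V_{3}(\epsilon).
\]
After multiplying by $\epsilon^{5-2/H}$ and letting $\epsilon\to 0$, the $\tilde{V}_{3}$ and $V_{3}$ contributions both tend to $\sigma_{2}^{2}$ (by the previous paragraph and by \Cref{lemm2}, respectively) and hence cancel; the proof of \Cref{lemm2} already supplies $\epsilon^{5-2/H}(V_{1}(\epsilon)+V_{2}(\epsilon))\to 0$, so nonnegativity forces $\epsilon^{5-2/H}\tilde{V}_{1}(\epsilon)$ and $\epsilon^{5-2/H}\tilde{V}_{2}(\epsilon)$ to vanish in the limit, yielding the claim. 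The main obstacle is the construction of the explicit integrable dominator in the $\tilde{V}_{3}$ rescaling step: this is where the sharper Hurst constraint $H<2/3$ becomes essential (it is exactly what keeps the single-variable power-law integral convergent at infinity) and where the two Beta-function parameters defining $\sigma_{2}^{2}$ are read off.
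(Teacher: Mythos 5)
Your proposal follows exactly the route the paper intends: the paper's own proof of this lemma is the single line ``Similar to \Cref{lemma3},'' and your sketch is precisely that adaptation --- isometry, splitting into $\tilde V_1+\tilde V_2+\tilde V_3$ over the regions of \Cref{Bounds}, rescaling $(a,c)\mapsto(\epsilon^{1/(2H)}a,\epsilon^{1/(2H)}c)$ in $\tilde V_3$ with kernel exponent $\tfrac{|k|+q+d}{2}=\tfrac52$, dominated convergence plus \Cref{betafunction}, and the comparison $\tilde V_1+\tilde V_2+\tilde V_3\le V_1+V_2+V_3$ to dispose of the first two terms via \Cref{lemm2}. (You also correctly read the statement as belonging to the $d=3$ regime of \Cref{lemm2}; the ``$d=2$'' in the lemma as printed is a typo.)

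One correction to your closing remark: the restriction $H<\tfrac23$ is \emph{not} what makes the single-variable integral in the $\tilde V_3$ dominator converge at infinity. That integral, $\int_0^\infty a\,(1+a^{2H})^{-5/2}\,da$, converges by \Cref{betafunction} precisely when $2-5H<0$, i.e.\ $H>\tfrac25$, which is already implied by $H>\tfrac12$; the Beta parameters $\bigl(\tfrac1H,\tfrac{5H-2}{2H}\bigr)$ are read off from this lower-bound condition. The upper bound $H<\tfrac23$ is actually consumed in the step you import from \Cref{lemm2}, namely the negligibility of the off-diagonal term $V_1(\epsilon)$: there the bound $|\mu|\,|\epsilon I+\Sigma|^{-5/2}\lesssim b^{-3H/2}c^{-3H/2}(\epsilon+a^Hc^H)^{-5/2}$ requires $\tfrac{3H}{2}<1$ for the $b$-integral to be finite. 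Since your argument does correctly delegate $\epsilon^{5-2/H}(V_1+V_2)\to0$ to \Cref{lemm2}, this is a misattribution of where the hypothesis enters rather than a gap in the proof itself.
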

Now we prove \Cref{thm1} and \Cref{thm2}. 
\begin{proof}[Proof of \Cref{thm1}]
Choosing $d=2$ and $\frac{1}{2}<H<1$ and applying \Cref{lemm1} and \Cref{lemma3}, we have
\begin{align*}
    \lim_{\epsilon\to 0}\mathbb{E}[\epsilon^{4-\frac{2}{H}}|\hat{\alpha}^{(1)}_{t,\epsilon}|^{2}]=\lim_{\epsilon\to 0}\mathbb{E}[\epsilon^{4-\frac{2}{H}}|I_{1}(f^{(1)}_{1,t,\epsilon})|^{2}],
\end{align*}
and according to \Cref{explict form of f}, this means that the term
\begin{align*}
    \epsilon^{2-\frac{1}{H}}\sum_{q=2}^{\infty}I_{2q-1}(f^{(1)}_{2q-1,t,\epsilon})
\end{align*}
converges to 0 in $L^{2}(\Omega)$. Since $\epsilon^{2-\frac{1}{H}}I_{1}(f_{1,t,\epsilon}^{(1)})$ is Gaussian and its variance converges to $\sigma_{1}^{2}$, then \Cref{thm1} follows.
\end{proof}
\begin{proof}[Proof of \Cref{thm2}] Choosing $d=3$ and $\frac{1}{2}<H<\frac{2}{3}$ and applying \Cref{lemm2} and \Cref{lemma4}, we have 
\begin{align*}
    \lim_{\epsilon\to 0}\mathbb{E}[\epsilon^{5-\frac{2}{H}}|\hat{\alpha}^{(1)}_{t,\epsilon}|^{2}]=\lim_{\epsilon\to 0}\mathbb{E}[\epsilon^{5-\frac{2}{H}}|I_{1}(f^{(1)}_{1,t,\epsilon})|^{2}],
\end{align*}
and according to \Cref{explict form of f}, this means
\begin{align*}
    \epsilon^{\frac{5}{2}-\frac{1}{H}}\sum_{q=2}^{\infty}I_{2q-1}(f^{(1)}_{2q-1,t,\epsilon})
\end{align*}
converges to 0 in $L^{2}(\Omega)$. Since $\epsilon^{\frac{5}{2}-\frac{1}{H}}I_{1}(f_{1,t,\epsilon}^{(1)})$ is Gaussian and its variance converges to $\sigma_{2}^{2}$, then \Cref{thm2} follows.
\end{proof}
\subsection*{Acknowledgements}
We would like to thank the anonymous referees for valuable comments.

\subsection*{Declaration of interest}
Declarations of interest: none.

\subsection*{Funding}
Binghao Wu acknowledges funding from an Australian Government Research Training Program (RTP) Scholarship. Kaustav Das has been supported by the Australian Research Council (Grant DP220103106). Qian Yu is supported by the National Natural Science Foundation of China (12201294).

\appendix
\section{Malliavin calculus preliminaries} \label{malliavin appendix}
     In this article, the Hilbert spaces $\mathcal{H}$ discussed are separable with an inner product $\langle \cdot, \cdot \rangle_{\mathcal{H}}$. We denote the norm of an element $h\in\mathcal{H}$ by $||\cdot||_{\mathcal{H}}$.
\begin{def}
\label{isonormal Gaussian processes}
    We say a stochastic process $W=\{W(h);h\in \mathcal{H}\}$ defined on a complete probability space $(\Omega, \mathcal{F}, \mathbb{P})$ is an isonormal Gaussian process if $W$ is a centered Gaussian family of random variables with $\mathbb{E}[W(h)W(g)]=\langle h,g\rangle_{\mathcal{H}}$ for all $h,g \in \mathcal{H}$.
\end{def}
\noindent
Let $H_q$ denote the $q$-th Hermite polynomial, defined as
\begin{align*}
    H_{q}(x)=(-1)^{q}e^{\frac{x^{2}}{2}}\frac{d^{q}}{dx^{q}}e^{-\frac{x^{2}}{2}}, \quad q\geq1,
\end{align*}
and $H_{0}(x)=1$. 
\begin{lemma}
    Let $X$ and $Y$ be two jointly Gaussian random variables with mean zero and variance $1$, then for $n,m\geq 1$, we have
    \begin{align*}
        \mathbb{E}[H_{n}(X)H_{m}(Y)]=
        \begin{cases}
            0 &\text{if $m\neq n$ }\\
            n!\mathbb{E}[XY]^{n}&\text{ if } m=n.
        \end{cases}
    \end{align*}
\end{lemma}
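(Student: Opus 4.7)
The plan is to prove this classical identity (a form of Mehler's formula) via the Hermite generating function
\[
    \sum_{n=0}^{\infty}\frac{t^{n}}{n!}H_{n}(x) = e^{tx - t^{2}/2},
\]
which follows directly from Taylor expanding $e^{-(x-t)^{2}/2}$ about $t=0$ and comparing with the Rodrigues-type definition of $H_{n}$ given in the paper.

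First, I would set $\rho := \mathbb{E}[XY]$ and use the fact that, since $X,Y$ are jointly Gaussian with zero means and unit variances, $sX+tY$ is Gaussian with mean $0$ and variance $s^{2}+2st\rho+t^{2}$. Hence
\[
    \mathbb{E}\!\left[e^{sX-s^{2}/2}\,e^{tY-t^{2}/2}\right] = e^{-(s^{2}+t^{2})/2}\,\mathbb{E}[e^{sX+tY}] = e^{st\rho}.
\]
On the other hand, applying the generating-function identity to each factor inside the expectation and interchanging expectation with the double sum gives
\[
    \mathbb{E}\!\left[e^{sX-s^{2}/2}\,e^{tY-t^{2}/2}\right] = \sum_{n,m\geq 0}\frac{s^{n}t^{m}}{n!\,m!}\,\mathbb{E}[H_{n}(X)H_{m}(Y)].
\]
Equating this with $e^{st\rho} = \sum_{n\geq 0}\frac{s^{n}t^{n}\rho^{n}}{n!}$ and comparing coefficients of $s^{n}t^{m}$ yields $\mathbb{E}[H_{n}(X)H_{m}(Y)]=0$ when $n\neq m$ and $\mathbb{E}[H_{n}(X)H_{n}(Y)]=n!\,\rho^{n}$, as claimed.

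The only mildly technical step is justifying the interchange of $\mathbb{E}$ and the double sum. I would address this with Fubini applied to the absolutely convergent series: since $|H_{n}(X)|$ has moments of every order (as $X$ is Gaussian), and by Cauchy--Schwarz
\[
    \sum_{n,m}\frac{|s|^{n}|t|^{m}}{n!\,m!}\,\mathbb{E}|H_{n}(X)H_{m}(Y)| \leq \left(\sum_{n}\frac{|s|^{n}}{n!}\sqrt{\mathbb{E}[H_{n}(X)^{2}]}\right)\!\left(\sum_{m}\frac{|t|^{m}}{m!}\sqrt{\mathbb{E}[H_{m}(Y)^{2}]}\right),
\]
and $\mathbb{E}[H_{n}(X)^{2}]=n!$ for $X\sim\mathcal{N}(0,1)$ (the $\rho=1$ special case, which can be taken as known or bootstrapped from the $n=m$ computation above with $Y=X$), each factor is bounded by $\sum_{n}\frac{|s|^{n}}{\sqrt{n!}}<\infty$. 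So absolute convergence holds and Fubini applies. The bulk of the argument is then pure comparison of power-series coefficients, and I do not anticipate any real obstacles beyond this routine verification.
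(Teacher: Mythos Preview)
Your argument is correct and is in fact the standard generating-function proof of this classical identity. The paper itself does not prove this lemma at all: it is stated in the Malliavin calculus appendix as a known preliminary fact (it is, for instance, Lemma~1.1.1 in Nualart's \emph{The Malliavin Calculus and Related Topics}) and is used only as background. So there is nothing to compare against; your write-up would serve as a self-contained justification where the paper simply cites the result. The mild circularity you flag in the Fubini step---using $\mathbb{E}[H_{n}(X)^{2}]=n!$---is harmless: that identity can be obtained independently by repeated integration by parts against the Gaussian density using the Rodrigues definition, or one can avoid it entirely by the crude bound $|H_{n}(x)|\leq C_{n}(1+|x|^{n})$ together with Gaussian moment estimates.
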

\begin{dfn}
    A topological vector space $A$ is said to be a total subset of $B$ if 
    \begin{align*}
        \overline{\text{Span}(A)}=B.
    \end{align*}
\end{dfn}
\noindent
Let $\mathcal{G}$ be the $\sigma$-algebra generated by the collection of random variables $\{W(h);h\in\mathcal{H}\}$.
\begin{lemma}
    The random variables $\{e^{W(h)};h\in \mathcal{H}\}$ form a total subset of $L^{2}(\Omega,\mathcal{G},\mathbb{P})$.
\end{lemma}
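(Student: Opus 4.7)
The plan is to characterize $L^{2}(\Omega,\mathcal{G},\mathbb{P})$ by its dual and show that the orthogonal complement of $\mathrm{Span}\{e^{W(h)}:h\in\mathcal{H}\}$ is trivial. Concretely, I would take an arbitrary $X\in L^{2}(\Omega,\mathcal{G},\mathbb{P})$ with
\begin{align*}
\mathbb{E}\bigl[X\,e^{W(h)}\bigr]=0\qquad\text{for every }h\in\mathcal{H},
\end{align*}
and work to deduce $X=0$ almost surely. Since $W$ is linear in $h$, applying the orthogonality condition with $h$ replaced by $t_{1}h_{1}+\dots+t_{n}h_{n}$ for any fixed finite family $h_{1},\dots,h_{n}\in\mathcal{H}$ and any $t=(t_{1},\dots,t_{n})\in\mathbb{R}^{n}$ yields
\begin{align*}
F(t):=\mathbb{E}\Bigl[X\,\exp\Bigl(\textstyle\sum_{j=1}^{n}t_{j}W(h_{j})\Bigr)\Bigr]=0.
\end{align*}

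The next step is to upgrade this vanishing of the Laplace-type transform to vanishing of a Fourier transform. Because $W(h_{1}),\dots,W(h_{n})$ are jointly Gaussian, $\exp(\sum t_{j}W(h_{j}))$ belongs to $L^{p}$ for every $p\geq 1$, with norms that grow as $\exp(C|t|^{2})$. By Cauchy--Schwarz, $F$ extends to an entire function on $\mathbb{C}^{n}$ (one can differentiate under the expectation using Gaussian tail bounds, or equivalently write $F$ as an absolutely convergent power series in $t$). Since $F\equiv 0$ on $\mathbb{R}^{n}$, the identity theorem forces $F\equiv 0$ on $\mathbb{C}^{n}$; specialising to purely imaginary $t=i\tau$ gives
\begin{align*}
\mathbb{E}\Bigl[X\,e^{i\sum_{j}\tau_{j}W(h_{j})}\Bigr]=0\qquad\text{for all }\tau\in\mathbb{R}^{n}.
\end{align*}
This is the Fourier transform of the finite signed measure $A\mapsto\mathbb{E}[X\mathbf{1}_{A}]$ on $(\mathbb{R}^{n},\mathcal{B}(\mathbb{R}^{n}))$ pushed forward along $(W(h_{1}),\dots,W(h_{n}))$. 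By injectivity of the Fourier transform, this measure is zero, equivalently $\mathbb{E}[X\mid\mathcal{G}_{n}]=0$ a.s., where $\mathcal{G}_{n}=\sigma(W(h_{1}),\dots,W(h_{n}))$.

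Finally, since $\mathcal{H}$ is separable, I would fix a countable dense set $\{h_{i}\}_{i\geq 1}\subset\mathcal{H}$ and set $\mathcal{G}_{n}=\sigma(W(h_{1}),\dots,W(h_{n}))$. The continuity of $h\mapsto W(h)$ in $L^{2}(\Omega)$ combined with density implies that $\mathcal{G}_{\infty}:=\sigma(\bigcup_{n}\mathcal{G}_{n})$ coincides with $\mathcal{G}$ up to $\mathbb{P}$-null sets. By L\'evy's upward martingale convergence theorem, $\mathbb{E}[X\mid\mathcal{G}_{n}]\to\mathbb{E}[X\mid\mathcal{G}]=X$ in $L^{2}$, and the previous step shows each term is zero, giving $X=0$ a.s.

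The only technically delicate step is the analytic-continuation argument that converts vanishing Laplace transforms into vanishing characteristic functions; once one is comfortable justifying differentiation under the expectation using the Gaussian moment bound $\mathbb{E}[e^{p|W(h)|}]<\infty$, the rest is a routine density/martingale argument. The separability hypothesis on $\mathcal{H}$ is essential precisely to make the martingale step work with a countable filtration.
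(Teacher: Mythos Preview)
The paper does not prove this lemma; it is listed in \Cref{malliavin appendix} as a standard preliminary from Malliavin calculus, with the reader referred to \citep{nualart2006malliavin} for details. Your argument is correct and is in fact the classical proof (cf.\ Nualart, Lemma~1.1.2): orthogonality to all $e^{W(h)}$, analytic continuation in the parameters $t_{j}$ to pass from Laplace to Fourier transforms of the finite-dimensional marginals, injectivity of the Fourier transform, and then a passage from cylinder sets to the full $\sigma$-field $\mathcal{G}$.

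One small comment on your closing remark: separability of $\mathcal{H}$ is needed for the particular martingale-convergence route you take, but it is not essential for the result itself. Once you have $\mathbb{E}[X\mathbf{1}_{G}]=0$ for every $G$ in the $\pi$-system of cylinder sets $\{(W(h_{1}),\dots,W(h_{n}))\in B\}$, the collection of $G\in\mathcal{G}$ with $\mathbb{E}[X\mathbf{1}_{G}]=0$ is a $\lambda$-system, and the $\pi$--$\lambda$ theorem gives $X=0$ without any countability hypothesis. Since the paper assumes $\mathcal{H}$ separable throughout, this distinction is immaterial here, but it is worth knowing that the lemma holds in full generality.
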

\noindent
\begin{dfn}
Denote by $\mathbb{H}_{n}$ the closed subspace of $L^{2}(\Omega,\mathcal{G},\mathbb{P})$ generated by the random variables $\{H_{n}(W(h)); h\in\mathcal{H},||h||_{\mathcal{H}}=1\}$ with $n\geq 0$, that is
\begin{align*}
    \mathbb{H}_{n}=\overline{\text{Span}(\{H_{n}(W(h));h\in \mathcal{H},||h||_{\mathcal{H}}=1\})}.
\end{align*}
\end{dfn}
\begin{thm}
\label{Chaos expansion for L2}
    The space $L^{2}(\Omega,\mathcal{G},\mathbb{P})$ can be decomposed as an infinite direct sum of subspaces $\mathbb{H}_{n}$:
    \begin{align*}
        L^{2}(\Omega,\mathcal{G},\mathbb{P})=\bigoplus_{n=0}^{\infty}\mathbb{H}_{n}.
    \end{align*}
\end{thm}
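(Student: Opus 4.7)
My plan is to establish the two pieces separately: mutual orthogonality of the spaces $\mathbb{H}_n$, and density of their algebraic sum in $L^2(\Omega,\mathcal{G},\mathbb{P})$. Orthogonality follows almost immediately from the lemma on $\mathbb{E}[H_n(X)H_m(Y)]$ stated just above the theorem. Indeed, for any $h,g\in\mathcal{H}$ with $\|h\|_{\mathcal{H}}=\|g\|_{\mathcal{H}}=1$ and $n\neq m$, the pair $(W(h),W(g))$ is jointly Gaussian with mean zero and unit variances, so $\mathbb{E}[H_n(W(h))H_m(W(g))]=0$. Taking closures, $\mathbb{H}_n\perp\mathbb{H}_m$ whenever $n\neq m$. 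Setting $\mathbb{H}:=\overline{\bigoplus_{n\geq 0}\mathbb{H}_n}$, we obtain a well defined orthogonal direct sum inside $L^2(\Omega,\mathcal{G},\mathbb{P})$.

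The main content is therefore to show that $\mathbb{H}=L^2(\Omega,\mathcal{G},\mathbb{P})$. Here I would use the generating function identity
\[
\exp\!\left(tW(h)-\tfrac{t^2}{2}\|h\|_{\mathcal{H}}^2\right)=\sum_{n=0}^{\infty}\frac{t^n\|h\|_{\mathcal{H}}^n}{n!}\,H_n\!\left(\frac{W(h)}{\|h\|_{\mathcal{H}}}\right),
\]
which is a straightforward consequence of the defining recursion for the Hermite polynomials. The series converges in $L^2(\Omega)$ because $W(h)/\|h\|_{\mathcal{H}}$ is standard Gaussian and the sum of squared $L^2$-norms of the terms (which equals $\sum_n t^{2n}\|h\|^{2n}_{\mathcal{H}}/n!=\exp(t^2\|h\|_{\mathcal{H}}^2)$) is finite. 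Consequently each $\exp(W(h))$, with $h\in\mathcal{H}$ arbitrary, lies in $\mathbb{H}$.

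To finish, I would invoke the lemma stating that $\{e^{W(h)}:h\in\mathcal{H}\}$ is a total subset of $L^2(\Omega,\mathcal{G},\mathbb{P})$, which immediately gives $\mathbb{H}=L^2(\Omega,\mathcal{G},\mathbb{P})$. Combining this with the orthogonality established above yields the claimed decomposition $L^2(\Omega,\mathcal{G},\mathbb{P})=\bigoplus_{n=0}^{\infty}\mathbb{H}_n$.

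The main obstacle, or rather the point requiring the most care, is verifying the $L^2$ convergence of the Hermite generating series and the interchange of limit and summation that lets one conclude $e^{W(h)}\in\mathbb{H}$; this is where the explicit $L^2$-norm computation for $H_n(W(h)/\|h\|_{\mathcal{H}})$, namely $\|H_n(W(h)/\|h\|_{\mathcal{H}})\|_{L^2}^2=n!$, is essential. Everything else reduces to either orthogonality (handled by the given Hermite product lemma) or density (handled by the given totality statement for the exponentials), so no substantially new estimates are required.
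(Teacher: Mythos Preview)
The paper does not actually supply a proof of this theorem: it is stated in the Malliavin calculus appendix as a known preliminary, with the reader referred to \cite{nualart2006malliavin} for details. Your proposed argument is correct and is precisely the standard proof (orthogonality from the Hermite product lemma, then totality of the exponentials combined with the Hermite generating function to get density), so there is nothing to compare against and no gap to flag.
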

\noindent
Let $C_{p}^{\infty}(\R^{n})$ be the set of infinitely differentiable functions $f:\R^{n}\to \R$ such that all of its partial derivatives have at most polynomial growth. Denote by $S$ the class of smooth random variables that has a form 
\begin{align*}
    F=f(W(h_{1}),\dots,W(h_{n}))
\end{align*}
with $f\in C_{p}^{\infty}(\R^{n})$ and $h_{1},\dots,h_{n}\in \mathcal{H}$. We will use notation $\partial_{i}f$ to denote $\frac{\partial f}{\partial x_{i}}$.
\begin{dfn}
    The derivative of a smooth random variable $F\in S$ is an $\mathcal{H}$ valued random variable:
    \begin{align*}
        DF=\sum_{i=1}^{n}\partial_{i}f(W(h_{1}),\dots,W(h_{n}))h_{i}.
    \end{align*}
\end{dfn}
\begin{prop}
    The operator $D$ is closable from $L^{p}(\Omega)$ to $L^{p}(\Omega;\mathcal{H})$ for all $p\geq 1$.
\end{prop}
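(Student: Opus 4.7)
The plan is to deduce closability from an integration by parts formula on the smooth class $S$. First I would establish the identity
\[
    \mathbb{E}\big[\langle DF, h\rangle_{\mathcal{H}}\big] = \mathbb{E}[F\, W(h)]
\]
for every $F\in S$ and $h\in \mathcal{H}$. By writing $F = f(W(h_1),\dots,W(h_n))$ and, after a Gram--Schmidt orthonormalisation, reducing to the case where $h_1,\dots,h_n$ are orthonormal and $h = \sum_i \alpha_i h_i$, the statement is a standard Gaussian integration by parts in $\mathbb{R}^n$ based on $\phi'(x) = -x\phi(x)$ for the standard density $\phi$. Applying this identity to the product $FG$ (which lies in $S$ because $C_p^\infty$ is closed under multiplication) and using the Leibniz rule for $D$ on $S$, I obtain the product version
\[
    \mathbb{E}\big[G\langle DF, h\rangle_{\mathcal{H}}\big]
    = \mathbb{E}\big[FG\,W(h)\big] - \mathbb{E}\big[F\langle DG, h\rangle_{\mathcal{H}}\big].
\]

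With this in hand, I would verify closability directly. Suppose $(F_n)\subset S$ with $F_n \to 0$ in $L^p(\Omega)$ and $DF_n \to \eta$ in $L^p(\Omega;\mathcal{H})$. Let $q=p/(p-1)$. For any $h\in\mathcal{H}$ and any test functional $G = \psi(W(g_1),\dots,W(g_m))$ with $\psi\in C_c^\infty(\mathbb{R}^m)$, both $GW(h)$ and $\langle DG,h\rangle_{\mathcal{H}} = \sum_j \partial_j\psi(W(g_1),\dots,W(g_m))\langle g_j,h\rangle_{\mathcal{H}}$ are bounded, hence in $L^q(\Omega)$. Applying the product identity to $F_n$ and $G$ and letting $n\to\infty$, H\"older's inequality gives
\[
    \mathbb{E}\big[G\langle \eta, h\rangle_{\mathcal{H}}\big]
    = \lim_{n\to\infty}\mathbb{E}\big[G\langle DF_n, h\rangle_{\mathcal{H}}\big]
    = \lim_{n\to\infty}\Big(\mathbb{E}[F_n G\,W(h)] - \mathbb{E}[F_n\langle DG,h\rangle_{\mathcal{H}}]\Big) = 0.
\]
The compactly supported cylindrical functionals $G$ form a dense subset of $L^q(\Omega,\mathcal{G},\mathbb{P})$ (this is essentially the totality statement recalled earlier, truncated to $C_c^\infty$), so $\langle \eta, h\rangle_{\mathcal{H}} = 0$ almost surely for every fixed $h\in\mathcal{H}$.

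Finally, since $\mathcal{H}$ is separable, choose a countable dense set $\{h_k\}\subset \mathcal{H}$ and take the intersection of the full-measure events on which $\langle \eta, h_k\rangle_{\mathcal{H}} = 0$; by continuity of the inner product in $h$ this forces $\eta = 0$ almost surely, which is precisely closability. The main obstacle, and the only step requiring real care, is the passage to the limit: one must produce enough test functionals $G$ so that (i) the integration by parts identity can be rigorously justified on $G$ (bounded derivatives make this immediate), (ii) the products $GW(h)$ and $\langle DG,h\rangle_{\mathcal{H}}$ lie in $L^q$, and (iii) the resulting collection is dense in $L^q(\Omega,\mathcal{G},\mathbb{P})$. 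The choice of $\psi\in C_c^\infty$ balances all three requirements, and the rest of the argument is then essentially formal.
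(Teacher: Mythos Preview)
Your argument is correct and is precisely the classical proof via the Gaussian integration by parts formula (as in Nualart's book). Note, however, that the paper does not actually prove this proposition: it is stated without proof in the Malliavin calculus preliminaries appendix, with a reference to \cite{nualart2006malliavin} for details, so there is no ``paper's own proof'' to compare against.
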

\noindent
We denote the domain of the operator $D$ in $L^{p}(\Omega)$ by $\mathbb{D}^{1,p}$ meaning it is the closure of $S$ with respect to the semi-norm defined as 
\begin{align*}
    ||F||_{1,p}=\left|\mathbb{E}[|F|^{p}]+\mathbb{E}[||DF||_{\mathcal{H}}^{p}]\right|^{\frac{1}{p}}. 
\end{align*}
The $k$-th iteration of $D$ for a smooth random variable $F$ is denoted as $D^{k}F$, which is an $\mathcal{H}^{\otimes k}$ valued random variable. As such $\mathbb{D}^{k,p}$ is the closure of $S$ with respective to the semi-norm 
\begin{align*}
    ||F||_{k,p}=\left|\mathbb{E}[|F|^{p}]+\mathbb{E}[||D^{k}F||_{\mathcal{H}^{\otimes k}}^{p}]\right|^{\frac{1}{p}}.
\end{align*}
Denote
\begin{align*}
\mathbb{D}^{\infty}=\bigcap_{k=1,p=1}^{\infty}\mathbb{D}^{k,p}.
\end{align*}
For more details in Malliavin Calculus, please refer to \citep{nualart2006malliavin}.
 
The $k$th DSLT of fractional Brownian motion is defined as follows:
\begin{align*}
    \lim_{\epsilon\to0}\hat{\alpha}^{(k)}_{t,\epsilon}=\lim_{\epsilon\to 0}\int_{D}\delta^{(k)}_{\epsilon}(B_{s}^{H}-B_{r}^{H})drds,
\end{align*}
where $D=\{(r,s)|0<r<s<t\}$, $\{B_{t}^{H}=(B^{H,1}_{t},\dots,B^{H,d}_{t})\}_{t\geq0}$ is a $d$-dimensional fractional Brownian motion
with $k=(k_{1},\dots,k_{d})$ and $|k|=\sum_{j=1}^{d}k_{j}$. Consider the space of indicator functions \begin{align*}
    \mathcal{L}=\{1_{[a,b]}; a,b\in\R, a\leq b\}.
\end{align*} 
Let $\mathcal{H}$ be the Hilbert space obtained by completing $\mathcal{L}$ with respect to the inner product
\begin{align*}
\langle1_{[a,b]},1_{[c,d]}\rangle_{\mathcal{H}}=\mathbb{E}[(B^{H,1}_{b}-B_{a}^{H,1})(B^{H,1}_{d}-B_{c}^{H,1})].    
\end{align*}
For all $f=(f_{1},\dots,f_{d})\in \mathcal{H}^{d}$, we define 
\begin{align*}
    B^{H}(f)=\sum_{j=1}^{d}B^{H,j}(f_{j}).
\end{align*}
Each $B^{H,j}(\cdot)$ is the isonormal Gaussian process with the associated Hilbert space $\mathcal{H}$.
As such $B^{H}(\cdot)$ is an isometry from $\mathcal{H}^{d}$ to the Gaussian subspace of $L^{2}({\Omega})$ generated by the $d$-dimensional fractional Brownian motion. The $q$-th Wiener chaos of $L^{2}(\Omega)$, denoted as $\mathbb{H}_{q}$, is a closed subspace of $L^{2}(\Omega)$ generated by the random variables 
\begin{align*}
    \left\{\prod_{j=1}^{d}H_{q_{j}}(B^{H,j}(f_{j}));\sum_{j=1}^{d}q_{j}=q, f_{j}\in \mathcal{H},||f_{j}||_{\mathcal{H}}=1\right\},
\end{align*}
where $H_{q}$ is the $q$th Hermite polynomial. For every $q\in\mathbb{N}$, we denote by $(\mathcal{H}^{d})^{\otimes q}$ the $q$-th tensor product of $\mathcal{H}^{d}$.


For $f^{1},\dots,f^{q}\in \mathcal{H}^{d}$ of the form $f^{i}=(f_{1}^{i},\dots,f_{d}^{i})$ with $ 1\leq i\leq q$, $f^{1}\otimes\cdots\otimes f^{q}$ can be defined as a multi-dimensional array:
\begin{align}
\label{eqn:tensor1}
    f^{1}\otimes\cdots\otimes f^{q}=(f{_{i_{1}}}^{1}\otimes\cdots\otimes f_{i_{q}}^{q})_{i_{1},\dots,i_{q}=1,\dots,d}.
\end{align}
The tensor product \cref{eqn:tensor1} is isomorphic to following form of the tensor product:
\begin{align*}
	f^{1}\otimes\cdots\otimes f^{q} = \sum_{i_1, \dots, i_q = 1}^d F_{i_1}^1 \otimes F_{i_2}^2 \otimes \dots \otimes F_{i_q}^q
\end{align*}
where $F_i^j = (0, \dots, f_i^j, \dots, 0)$ is a tuple of size $d$, which is equal to $f_i^j$ in the $i$-th position, and zero elsewhere.

In the special case that $f^1 = f^2 = \dots = f^q$, we then have
\begin{align}
\label{qth tensor product}
	f^{\otimes q} =  \sum_{i_1, \dots, i_q = 1}^d F_{i_1} \otimes F_{i_2} \otimes \dots \otimes F_{i_q}
\end{align}
where $F_i = (0, \dots, f_i, 0, \dots, 0)$ is a tuple of size $d$, which is equal to $f_i$ in the $i$-th position, and zero elsewhere.
We will prefer to use this form of the tensor product, as handling sums is more computationally convenient than multi-dimensional arrays.
Denote the symmetrization of $(\mathcal{H}^{d})^{\otimes q}$ by $(\mathcal{H}^{d})^{\odot q}$. Let $f\in \mathcal{H}^{d}$ be of the form $f=(f_{1},\dots,f_{d})$ with $||f_{j}||_{\mathcal{H}}=1$. Such $f^{\otimes q}$ belongs to $(\mathcal{H}^{d})^{\odot q}$, and we can define a mapping $I_{q}:(\mathcal{H}^{d})^{\odot q}\to \mathbb{H}_{q} $ as follows
\begin{align*}
    I_{q}(f^{\otimes q})=\sum_{i_{1},\dots,i_{q}=1}^{d}\sqrt{q_{1}(i_{1},\dots,i_{q})!\cdots q_{d}(i_{1},\dots,i_{q})!}\prod_{j=1}^{d}H_{q_{j}(i_{1},\dots,i_{q})}(B^{H, j}(f_{j})), j=1,\dots,d
\end{align*}
where $q_{j}(i_{1},\dots,i_{q})$ denotes the number of indices in $(i_{1},\dots,i_{q})$ equal to $j$. This mapping is a linear isometry between  $(\mathcal{H}^{d})^{\odot q}$ and $\mathbb{H}_{q}$. Thus, by \Cref{Chaos expansion for L2}, any square integrable random variable $F$ which is measurable with respect to the $\sigma$-algebra generated by the fractional Brownian motion will have a chaos expansion of the type
\begin{align*}
    F=\mathbb{E}[F]+\sum_{q=1}^{\infty}I_{q}(g_{q})
\end{align*}
 for some $g_{q}\in (\mathcal{H}^{d})^{\odot q}$.
\begin{lemma}
\label{explict form of f}
    Let $k=(|k|,0,\dots,0)\in \mathbb{N}^{d}$ with $d\in \mathbb{N}$ and $|k|\geq 1$ being odd. Then $\hat{\alpha}_{\epsilon,t}^{(k)}$ defined in $\cref{DSLT of fbm}$ possesses a Wiener chaos expansion,
    \begin{align*}
        \hat{\alpha}^{(k)}_{t,\epsilon}=\sum_{q=1}^{\infty}I_{2q-1}(f^{(k)}_{2q-1,t,\epsilon}),
    \end{align*}
    where 
\begin{align*}
f^{(k)}_{q,t,\epsilon}&=\frac{(-1)^{\frac{|k|+q}{2}}}{(2\pi)^{\frac{d}{2}}}\sum_{i_{1}=1,\dots,i_{q}=1}^{d}
     \int_{D} h_{i_{1}}\otimes\cdots\otimes h_{i_{q}}\\
    &\times \frac{(|k|+q_{1}(i_{1},\dots,i_{q})-1)!!\times\cdots\times (q_{d}(i_{1},\dots,i_{q})-1)!!}{((s-r)^{2H}+\epsilon)^{\frac{|k|+q+d}{2}}}drds
\end{align*}
with $h_{i}=(0,\dots,1_{[r,s]},\dots,0)\in \mathcal{H}^{d}$ which is zero everywhere except at its $i$-th entry.
\end{lemma}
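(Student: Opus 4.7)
The proof reduces to a direct computation of the Wiener chaos coefficients via Fourier inversion of the mollified derivative combined with the Hermite polynomial generating function.

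The plan is as follows. First I would rewrite
\[
\delta_\epsilon^{(k)}(B_s^H - B_r^H) = \frac{i^{|k|}}{(2\pi)^d}\int_{\mathbb{R}^d} p_1^{|k|}\, e^{i\langle p,\, B_s^H - B_r^H\rangle}\, e^{-\epsilon|p|^2/2}\, dp,
\]
and for each coordinate $j=1,\dots,d$ apply the classical identity
\[
e^{ip_j(B_s^{H,j} - B_r^{H,j})} = e^{-p_j^2(s-r)^{2H}/2}\sum_{n_j=0}^{\infty}\frac{(ip_j(s-r)^H)^{n_j}}{n_j!}\, H_{n_j}(B^{H,j}(g)),
\]
where $g := 1_{[r,s]}/(s-r)^H$ is the unit-norm element of $\mathcal{H}$ associated with the increment on $[r,s]$. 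Taking the product over $j$ merges the Gaussian weights with the mollifier into $e^{-(\epsilon+(s-r)^{2H})|p|^2/2}$ and produces a multi-series indexed by $\vec n=(n_1,\dots,n_d)\in\mathbb{N}^d$.

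Next I would carry out the $d$-fold momentum integral, which factorises into one-dimensional Gaussian moments. Each one-dimensional moment vanishes unless its exponent is even, so the integral is zero unless $|k|+n_1$ and every $n_j$ ($j\geq 2$) are even. Since $|k|$ is odd, this forces $n_1$ to be odd and hence the total degree $q=n_1+\cdots+n_d$ to be odd, which is exactly the mechanism by which only chaoses of odd order $2q'-1$ survive. On the surviving indices the Gaussian moments evaluate to
\[
(2\pi)^{d/2}\,\bigl(\epsilon+(s-r)^{2H}\bigr)^{-(|k|+q+d)/2}\,(|k|+n_1-1)!!\prod_{j\geq 2}(n_j-1)!!,
\]
which produces exactly the denominator and the double-factorial numerator in the statement, while the residual powers of $i$ combine into $(-1)^{(|k|+q)/2}$ because $i^{|k|+q}$ has an even exponent.

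The last step is to interpret $(s-r)^{qH}\prod_j H_{n_j}(B^{H,j}(g))$ as the image under $I_q$ of an appropriate symmetric tensor built from $h_i=(0,\dots,1_{[r,s]},\dots,0)$. The multinomial identity
\[
\sum_{\vec i:\,\mathrm{type}(\vec i)=\vec n} h_{i_1}\otimes\cdots\otimes h_{i_q} = \frac{q!}{n_1!\cdots n_d!}\,\mathrm{Sym}\bigl(h_1^{\otimes n_1}\otimes\cdots\otimes h_d^{\otimes n_d}\bigr),
\]
combined with the value of $I_q$ on such symmetrised rank-one tensors (obtained by specialising the formula in \Cref{malliavin appendix} to $f=(g,\dots,g)\in\mathcal{H}^d$ and matching types), repackages the type-indexed sum over $\vec n$ into the tuple-indexed sum over $(i_1,\dots,i_q)$ that appears in the definition of $f^{(k)}_{q,t,\epsilon}$. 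Integrating over $D$ and summing over odd $q$ then yields the stated decomposition.

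The principal technical obstacle is the rigorous justification of the interchange of the infinite Hermite series, the $p$-integration, and the $(r,s)$-integration over $D$. I would address this by first truncating the Hermite expansion at some finite level $N$, where Fubini applies termwise without issue, and then passing to the limit $N\to\infty$ in $L^2(\Omega)$. The orthogonality of distinct Wiener chaoses together with the uniform moment bound of \Cref{even moment bound} (and its DSLT analogue used in the proof of \Cref{theorem of EI of DSLT of fbm}) provides the tail control needed to convert the formal series manipulation into a genuine $L^2$-chaos decomposition for each fixed $\epsilon>0$.
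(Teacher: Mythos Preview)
Your proposal is correct, but it follows a different computational route from the paper. The paper proceeds via Stroock's formula: it writes $\delta_\epsilon^{(k)}(B_s^H-B_r^H)$ as $\frac{i^{|k|}}{(2\pi)^d}\int p_1^{|k|}e^{B^H(h)}e^{-\epsilon|p|^2/2}\,dp$ with the (complex) element $h=(ip_1 1_{[r,s]},\dots,ip_d 1_{[r,s]})$, observes that $D^q e^{B^H(h)}=e^{B^H(h)}h^{\otimes q}$, takes expectation to obtain $f_{q,t,\epsilon}^{(k)}=\frac{1}{q!}\mathbb{E}[D^q\hat\alpha^{(k)}_{t,\epsilon}]$, and only then expands $h^{\otimes q}$ over tuples $(i_1,\dots,i_q)$ and performs the Gaussian $p$-integral. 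Your route instead expands each factor $e^{ip_j(B_s^{H,j}-B_r^{H,j})}$ via the Hermite generating function, integrates in $p$ term by term, and repackages the type-indexed Hermite products into the tuple-indexed tensor sum via the multinomial identity. Both methods deliver the same double factorials, the same sign $(-1)^{(|k|+q)/2}$, and the same parity selection forcing odd $q$. The paper's approach is more compact because Stroock's formula absorbs the combinatorial bookkeeping, at the cost of working with formally complex Hilbert-space elements; your approach is more elementary (no Malliavin derivative needed) and makes the odd-chaos selection mechanism fully explicit.

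One small remark on your last paragraph: for fixed $\epsilon>0$ the integrand $\delta_\epsilon^{(k)}(B_s^H-B_r^H)$ is bounded, so $\hat\alpha^{(k)}_{t,\epsilon}\in L^2(\Omega)$ trivially and its chaos series converges in $L^2$ by the general Wiener--It\^o theory. Invoking the moment bounds of \Cref{even moment bound} here is unnecessary; those estimates are designed to be uniform in $\epsilon$, which is not what is at stake in this lemma.
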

\begin{remark}
    The proof here adopts similar techniques in Lemma 7 in \citep{hu2005renormalized}, Appendix $A$ in \citep{das2022existence} and Lemma 2.2 in \citep{yu2024limit}. Since we are interested in the limit theorem of $\hat{\alpha}_{t,\epsilon}^{(1)}$ in which $|k|=1$.  We  assume $k=(|k|,0,\dots,0)$ and $|k|\geq 1$ is odd.
\end{remark}
\begin{proof}
Note that we can rewrite the derivative of self-intersection local time of fractional Brownian motion as  
\begin{align*}
    \hat{\alpha}^{(k)}_{t,\epsilon}&=\frac{i^{|k|}}{(2\pi)^{d}}\int_{D}\int_{\R^{d}}\prod_{j=1}^{d}p_{j}^{k_{j}}e^{ip_{j}(B_{s}^{j,H}-B_{r}^{j,H})}e^{-\epsilon\frac{|p|^{2}}{2}}dpdrds\\
    &=\frac{i^{|k|}}{(2\pi)^{d}}\int_{D}\int_{\R^{d}}\prod_{j=1}^{d}p_{j}^{k_{j}}e^{B^{H}(h)}e^{-\epsilon\frac{|p|^{2}}{2}}dpdrds,
\end{align*}
where $h=(ip_{1}1_{[r,s]},\dots,ip_{d}1_{[r,s]})$. $e^{B^{H}(h)}$ is obviously in $\mathbb{D}^{\infty}$, and its $q$-th Malliavin derivative is 
\begin{align*}
    D^{q}e^{B(h)}=e^{B(h)}h^{\otimes q},
\end{align*}
where $h^{\otimes q}$ is defined through \cref{qth tensor product}. We can untangle $h^{\otimes q}$ as follows:
\begin{align}
\label{h^{q}}
\begin{split}
    h^{\otimes q}&=\sum_{i_{1},\dots,i_{q}=1}^{d}(0,..,ip_{i_{1}}1_{[r,s]},\dots0)\otimes\cdots\otimes(0,\dots,ip_{i_{q}}1_{[r,s]},\dots,0)\\
    &=\sum_{i_{1},\dots,i_{q}=1}^{d}i^{q}p_{1}^{q_{1}(i_{1},\dots,i_{d})}\cdots p_{d}^{q_{d}(i_{1},\dots,i_{d})}h_{i_{1}}\otimes\cdots\otimes h_{i_{q}},
\end{split}
\end{align}
where $h_{i}=(0,\dots,1_{[r,s]},\dots,0)\in \mathcal{H}^{d}$ only has non zero at its $i$-th entry.
Thus, by Stroock's formula, the chaos expansion for $\alpha^{(k)}_{t,\epsilon}$ is 
\begin{align*}
\alpha^{(k)}_{t,\epsilon}=\mathbb{E}[\alpha_{t,\epsilon}^{(k)}]+\sum_{q=1}^{\infty}I_{q}(f^{(k)}_{q,t,\epsilon}),
\end{align*}
where
\begin{align*}
f^{(k)}_{q,t,\epsilon}&=\frac{i^{|k|}}{(2\pi)^{d}}\int_{D}\int_{\R^{d}}\prod_{j=1}^{d}p_{j}^{k_{j}}\frac{1}{q!}\mathbb{E}[D^{q}e^{B^{H}(h)}]e^{-\epsilon\frac{|p|^{2}}{2}}dpdrds\\
&=\frac{i^{|k|}}{(q!)(2\pi)^{d}}\int_{D}\int_{\R^{d}}\prod_{j=1}^{d}p_{j}^{k_{j}}\mathbb{E}[e^{B^{H}(h)}]h^{\otimes q}e^{-\epsilon\frac{|p|^{2}}{2}}dpdrds\\
&=\frac{i^{|k|}}{(q!)(2\pi)^{d}}\int_{D}\int_{\R^{d}}\prod_{j=1}^{d}p_{j}^{k_{j}}e^{-\frac{1}{2}p^{2}_{j}((s-r)^{2H}+\epsilon)}h^{\otimes q}dpdrds.
\end{align*}
 Then by \cref{h^{q}}, we have 
\begin{align*}
    f^{(k)}_{q,t,\epsilon}&=\frac{i^{|k|}}{(q!)(2\pi)^{d}}\int_{D}\int_{\R^{d}}p_{1}^{|k|}e^{-\frac{1}{2}\sum_{j=1}^{d}p_{j}^{2}((s-r)^{2H}+\epsilon)}h^{\otimes q}dpdrds\\
    &=\frac{i^{|k|+q}}{(q!)(2\pi)^{d}}\sum_{i_{1}=1,\dotsi_{q}=1}^{d}\int_{D}\int_{\R^{d}}p_{1}^{|k|+q_{1}(i_{1},\dots,i_{q})}\cdots p_{d}^{q_{d}(i_{1},\dots,i_{q})}e^{-\frac{1}{2}\sum_{j=1}^{d}p_{j}^{2}((s-r)^{2H}+\epsilon)}\\ &\times h_{i_{1}}\otimes\cdots\otimes h_{i_{q}}dpdrds.
\end{align*}
Note that 
\begin{align*}
    \int_{\R^{d}}p_{1}^{|k|+q_{1}}\cdots p_{d}^{q_{d}}e^{-\frac{1}{2}\sum_{j=1}^{d}p_{j}^{2}((s-r)^{2H}+\epsilon)}dp=\frac{(|k|+q_{1}-1)!!\times\cdots\times (q_{d}-1)!!}{((s-r)^{2H}+\epsilon)^{\frac{|k|+q+d}{2}}}(2\pi)^{\frac{1}{2}d}
\end{align*}
when $|k|+q_{1},\dots,q_{d}$ are all even, otherwise it is equal to 0. Consequently, $f_{q,t,\epsilon}$ is 0 when $q$ is even. Thus, when $q$ is odd, we have 
\begin{align*}
f^{(k)}_{q,t,\epsilon}&=\frac{(-1)^{\frac{|k|+q}{2}}}{(2\pi)^{\frac{d}{2}}}\sum_{i_{1}=1,\dots,i_{q}=1}^{d}
     \int_{D} h_{i_{1}}\otimes\cdots\otimes h_{i_{q}}\\
    &\times \frac{(|k|+q_{1}(i_{1},\dots,i_{q})-1)!!\times\cdots\times (q_{d}(i_{1},\dots,i_{q})-1)!!}{((s-r)^{2H}+\epsilon)^{\frac{|k|+q+d}{2}}}drds.
\end{align*}
Hence when $|k|$ is odd, we have 
\begin{align}
\hat{\alpha}^{(k)}_{t,\epsilon}&=\mathbb{E}[\hat{\alpha}_{t,\epsilon}^{(k)}]+\sum_{q=1}^{\infty}I_{2q-1}(f^{(k)}_{2q-1,t,\epsilon})\\
&=\sum_{q=1}^{\infty}I_{2q-1}(f^{(k)}_{2q-1,t,\epsilon}),
\end{align}
as one can easily verify that $\mathbb{E}[\hat{\alpha}_{t,\epsilon}^{(k)}]=0$
\end{proof}
\section{Technical lemmas}
\label{appen:misc}
In this section we collect some of the technical estimates and facts which were used in the proofs of the theorems. Many of these facts can be found elsewhere, but we include proofs of most of them for the benefit of the reader.

The standard Gamma function is defined as follows:
\begin{align*}
	\Gamma(x)=\int_{0}^{\infty}t^{x-1}e^{-t}dt.
\end{align*}

This function is well defined except for negative integers, and satisfies $x\Gamma(x) = \Gamma(x+1)$. In this article we will only need to utilise the Gamma function with positive arguments. We require the following fact. 
\begin{lemma}[{\citep[Lemma ~A.3]{das2025exponential}}]
\label{lmag}
For any integer $n$ and $k\in (0,1)$,
\begin{align*}
\Gamma(kn)\leq((n-1)!)^{k},  \\
\Gamma(kn+1)\geq k^{n}(n!)^{k}.
\end{align*}
\end{lemma}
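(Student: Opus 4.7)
Both inequalities are facets of the log-convexity of $\log\Gamma$ on $(0,\infty)$, \ie\ the Bohr--Mollerup theorem, but applied in opposite directions.

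For the upper bound $\Gamma(kn)\le((n-1)!)^{k}$, I would use a chord argument. Log-convexity says $\log\Gamma$ lies below the chord through $(1,\log\Gamma(1))=(1,0)$ and $(n,\log\Gamma(n))=(n,\log((n-1)!))$ on the interval $[1,n]$. Evaluating at $x=kn$, which lies in $[1,n]$ whenever $kn\ge 1$ (the regime used in the paper's applications), yields
\[
    \log\Gamma(kn)\;\le\;\frac{kn-1}{n-1}\,\log((n-1)!).
\]
Since $\frac{kn-1}{n-1}\le k$ precisely because $k\le 1$, and $\log((n-1)!)\ge 0$ for $n\ge 2$, multiplying the slope out gives $\Gamma(kn)\le((n-1)!)^{k}$.

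For the lower bound $\Gamma(kn+1)\ge k^{n}(n!)^{k}$, I would induct on $n$. The base $n=1$ reduces to $\Gamma(k+1)=k\Gamma(k)\ge k$, which is valid because $\Gamma$ is strictly decreasing on $(0,1]$ with $\Gamma(1)=1$, so $\Gamma(k)\ge 1$ for every $k\in(0,1]$. For the inductive step, my driver is (the upper half of) Gautschi's inequality,
\[
    \frac{\Gamma(x+1)}{\Gamma(x+s)}\;\le\;(x+s)^{1-s},\qquad x>0,\; s\in(0,1),
\]
itself a quick consequence of log-convexity via the identity $x+1=s(x+s)+(1-s)(x+1+s)$. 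Setting $x=kn$ and $s=k$ gives $\Gamma(kn+1)/\Gamma(kn+k)\le k^{1-k}(n+1)^{1-k}\le(n+1)^{1-k}$, using $k^{1-k}\le 1$ for $k\in(0,1)$. Combining this with the functional equation $\Gamma(k(n+1)+1)=k(n+1)\Gamma(kn+k)$ and the induction hypothesis,
\[
    \Gamma(k(n+1)+1)\;\ge\;k(n+1)^{k}\,\Gamma(kn+1)\;\ge\;k^{n+1}(n+1)^{k}(n!)^{k}\;=\;k^{n+1}((n+1)!)^{k},
\]
closing the induction.

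The main obstacle is not really analytic but just verifying that the bookkeeping of constants works out; both inequalities are log-convexity in disguise. The one delicate point I would flag is that when $kn<1$ the chord argument for the first inequality breaks down (and indeed $\Gamma(kn)\to\infty$ as $kn\to 0^{+}$), so a strict reading of the statement requires the implicit hypothesis $kn\ge 1$. This is satisfied in every invocation in the paper, where the relevant exponent $k$ (such as $n/(n+1)$ or $1-H|k|/2-Hd/2$) is bounded below in a way making $kn\ge 1$ for the $n$'s of interest.
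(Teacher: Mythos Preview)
The paper does not supply its own proof of this lemma; it is merely quoted from \cite{das2025exponential}. So there is nothing in the present paper to compare against, and the relevant question is simply whether your argument stands on its own.

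It does. For the first bound, convexity of $\log\Gamma$ indeed gives $\log\Gamma(kn)\le\frac{kn-1}{n-1}\log((n-1)!)$ whenever $kn\in[1,n]$, and the elementary inequality $\frac{kn-1}{n-1}\le k$ (equivalent to $k\le 1$) finishes it. For the second bound, your derivation of the Gautschi-type estimate $\Gamma(x+1)/\Gamma(x+s)\le(x+s)^{1-s}$ from the convex combination $x+1=s(x+s)+(1-s)(x+1+s)$ is clean, and the induction goes through exactly as you wrote: $\Gamma(k(n+1)+1)=k(n+1)\Gamma(kn+k)\ge k(n+1)^{k}\Gamma(kn+1)\ge k^{n+1}((n+1)!)^{k}$. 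The base case $\Gamma(k+1)=k\Gamma(k)\ge k$ is also fine since $\Gamma$ is strictly decreasing on $(0,1]$ with $\Gamma(1)=1$.

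Your caveat about $kn\ge 1$ is well taken and in fact necessary: for $n=1$ the first inequality asserts $\Gamma(k)\le 1$, which is false for every $k\in(0,1)$. The lemma as stated is therefore only valid under this implicit restriction, and you are right that all invocations in the paper respect it.
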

\begin{lemma}[{\citep[Lemma ~4.5]{hu2015stochastic}}]
\label{Beta function}
Let $\alpha\in (-1+\epsilon,1)^{m}$ with $\epsilon>0$ and set $|\alpha|=\sum_{i=1}^{m}\alpha_{i}$. $T_{m}(t)=\{(r_{1},r_{2},\dots,r_{m})\in \R^{m}:0<r_{1}<\dots<r_{m}<t\}$. Then there is a constant $c$ such that 
$$J_{m}(t,\alpha)=\int_{T_{m}(t)}\prod_{i=1}^{m}(r_{i}-r_{i-1})^{\alpha_{i}}dr\leq \frac{c^{m}t^{|\alpha|+m}}{\Gamma(|\alpha|+m+1)},$$
where by convention, $r_{0}=0$.
\end{lemma}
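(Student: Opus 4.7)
The plan is to reduce the integral over the ordered simplex $T_m(t)$ to an integral over the standard (unordered) simplex via a telescoping change of variables, then apply the classical Dirichlet integral formula, and finally absorb the resulting Gamma factors into a single constant using the assumption $\alpha_i \in (-1+\epsilon, 1)$.

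First, I would change variables via $u_i = r_i - r_{i-1}$ for $i = 1, \dots, m$, using the convention $r_0 = 0$. This linear transformation has Jacobian $1$ and maps $T_m(t)$ bijectively onto the standard open simplex $\Delta_m(t) := \{u \in \R^m : u_i > 0 \text{ and } u_1 + \cdots + u_m < t\}$. The integrand $\prod_{i=1}^m (r_i - r_{i-1})^{\alpha_i}$ becomes $\prod_{i=1}^m u_i^{\alpha_i}$, giving
\[
    J_m(t, \alpha) = \int_{\Delta_m(t)} \prod_{i=1}^m u_i^{\alpha_i} \, du.
\]

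Second, I would invoke the classical Dirichlet integral identity
\[
    \int_{\Delta_m(t)} \prod_{i=1}^m u_i^{\alpha_i} \, du
    = \frac{\prod_{i=1}^m \Gamma(\alpha_i + 1)}{\Gamma(|\alpha| + m + 1)} \, t^{|\alpha| + m},
\]
which can be established by induction on $m$ using Fubini together with the scaling $u_m = (t - u_1 - \cdots - u_{m-1}) v$ and the Beta-function identity $B(p,q) = \Gamma(p)\Gamma(q)/\Gamma(p+q)$. The hypothesis $\alpha_i > -1 + \epsilon$ guarantees the one-dimensional integrals in the induction are finite.

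Finally, the condition $\alpha_i \in (-1+\epsilon, 1)$ gives $\alpha_i + 1 \in (\epsilon, 2)$, and since $\Gamma$ is continuous on this compact interval, there exists a constant $c = c(\epsilon)$, independent of $m$ and of the $\alpha_i$, such that $\Gamma(\alpha_i + 1) \leq c$. Hence $\prod_{i=1}^m \Gamma(\alpha_i + 1) \leq c^m$, yielding the claimed bound. The only substantive issue is the integrability of $u_i^{\alpha_i}$ near the boundary of the simplex; it is precisely the uniform lower bound $\alpha_i > -1 + \epsilon$ (rather than the threshold $\alpha_i > -1$) that simultaneously guarantees absolute convergence of the Dirichlet integral and produces the $m$-independent constant $c$ needed for the final estimate.
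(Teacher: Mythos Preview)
Your argument is correct; in fact, via the Dirichlet integral you obtain equality rather than merely the stated upper bound. The paper does not supply its own proof of this lemma --- it is quoted from \citep[Lemma~4.5]{hu2015stochastic} --- so there is no in-paper argument to compare against, but the change of variables $u_i = r_i - r_{i-1}$ followed by the Dirichlet/Liouville formula is exactly the standard route. One trivial imprecision: the interval $(\epsilon,2)$ is open, not compact; what you need is that $\Gamma$ is continuous on the closure $[\epsilon,2]$ and hence bounded there by some $c=c(\epsilon)$, which gives $\prod_i \Gamma(\alpha_i+1)\le c^m$ as you claim.
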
    
\begin{lemma}[{\citep[Lemma A.1]{nualart2007intersection}}]
\label{Conditional Variance inequality}
    Suppose that $\mathcal{G}_{1}\subset\mathcal{G}_{2}$ are two $\sigma$-algebras in $\mathcal{F}$. Then for any square integrable random variable $F$ we have
    \begin{align*}
        \text{Var}(F|\mathcal{G}_{1})\geq \text{Var}(F|\mathcal{G}_{2})
    \end{align*}
    holds almost surely.
\end{lemma}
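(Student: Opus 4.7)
The plan is to exploit the characterization of $E[F|\mathcal{G}]$ as the orthogonal projection of $F$ onto the closed subspace $L^2(\mathcal{G}) := L^2(\Omega, \mathcal{G}, P)$ of $L^2(P)$. Since the hypothesis $\mathcal{G}_1 \subset \mathcal{G}_2$ forces the subspace inclusion $L^2(\mathcal{G}_1) \subset L^2(\mathcal{G}_2)$, the element $E[F|\mathcal{G}_1]$ is itself a valid $\mathcal{G}_2$-measurable competitor against the optimal $\mathcal{G}_2$-projection $E[F|\mathcal{G}_2]$. Enlarging the subspace over which one projects can only shrink the projection residual, and the proof will turn this geometric observation into an equality with an explicit nonnegative surplus.

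Concretely, I would execute the Pythagorean decomposition
\[
  F - E[F|\mathcal{G}_1] \;=\; \bigl(F - E[F|\mathcal{G}_2]\bigr) + \bigl(E[F|\mathcal{G}_2] - E[F|\mathcal{G}_1]\bigr),
\]
in which the first summand is $L^2$-orthogonal to every element of $L^2(\mathcal{G}_2)$, while the second summand lies in $L^2(\mathcal{G}_2)$ precisely because $\mathcal{G}_1 \subset \mathcal{G}_2$. Squaring and taking the full expectation annihilates the cross term and yields the Pythagorean identity
\[
  E\bigl[(F - E[F|\mathcal{G}_1])^2\bigr] \;=\; E\bigl[(F - E[F|\mathcal{G}_2])^2\bigr] + E\bigl[(E[F|\mathcal{G}_2] - E[F|\mathcal{G}_1])^2\bigr],
\]
valid for \emph{every} square-integrable $F$ with no distributional hypothesis. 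The nonnegative surplus on the right then immediately delivers $\mathrm{Var}(F|\mathcal{G}_2) \le \mathrm{Var}(F|\mathcal{G}_1)$ under the natural scalar reading $\mathrm{Var}(F|\mathcal{G}) := \|F - E[F|\mathcal{G}]\|^2_{L^2(P)}$, which is the squared $L^2$-projection distance. As this quantity is nonrandom, the ``a.s.'' qualifier in the conclusion is automatically satisfied.

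The only genuine obstacle is interpretational. Under the alternative reading that takes $\mathrm{Var}(F|\mathcal{G})$ to be the $\mathcal{G}$-measurable random variable $E[(F - E[F|\mathcal{G}])^2|\mathcal{G}]$, the same decomposition conditioned on $\mathcal{G}_2$ delivers only the averaged inequality $\mathrm{Var}(F|\mathcal{G}_1) \ge E[\mathrm{Var}(F|\mathcal{G}_2)|\mathcal{G}_1]$ a.s., and the pointwise ordering can genuinely fail for a general $F$ (for instance when the $\mathcal{G}_2$-conditional variance is markedly larger on an event that $\mathcal{G}_1$ cannot resolve). To prove the lemma universally for every square-integrable $F$, as asserted, I therefore adopt the scalar reading above, under which the Pythagorean/projection argument goes through uniformly and matches how the lemma is subsequently used in the paper's Gaussian setting, where the two readings coincide.
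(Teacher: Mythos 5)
The paper does not actually prove this lemma; it imports it by citation from Nualart and Ortiz-Latorre, so there is no in-paper argument to compare yours against. Your projection proof is the standard one and is correct: writing $F - \mathbb{E}[F|\mathcal{G}_1] = (F - \mathbb{E}[F|\mathcal{G}_2]) + (\mathbb{E}[F|\mathcal{G}_2] - \mathbb{E}[F|\mathcal{G}_1])$, noting that the first summand is orthogonal to $L^2(\Omega,\mathcal{G}_2,\mathbb{P})$ while the second lies in it, and discarding the nonnegative surplus gives the inequality for the squared projection distances. Your interpretational caveat is also right, and it is the most valuable part of the write-up: under the reading $\mathrm{Var}(F|\mathcal{G}) = \mathbb{E}[(F-\mathbb{E}[F|\mathcal{G}])^2\,|\,\mathcal{G}]$, the asserted almost-sure pointwise ordering genuinely fails for general square-integrable $F$ (a two-atom example where $\mathcal{G}_2$ isolates an event on which $F$ is much more spread out than on average already breaks it), and only the averaged version $\mathrm{Var}(F|\mathcal{G}_1) \geq \mathbb{E}[\mathrm{Var}(F|\mathcal{G}_2)\,|\,\mathcal{G}_1]$ survives. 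In this paper the lemma is applied exclusively to jointly Gaussian families such as $(B^{H,1}_{s_j}, \hat{B}^{H,1}_{r_j})$, for which the conditional variances are deterministic and the two readings coincide, so your scalar reading is the one consistent with every use made of the lemma; the "almost surely" in the statement is then vacuous rather than substantive.
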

\begin{lemma}[{Appendix B in \citep{jung2014tanaka}}]
\label{Bounds}
    Let 
    \begin{align*}
        \lambda=|s-r|^{2H}, \rho=|s^{'}-r^{'}|^{2H},
    \end{align*}
    and
    \begin{align*}
        \mu=\frac{1}{2}\left(|s^{'}-r|^{2H}+|s-r^{'}|^{2H}-|s^{'}-s|^{2H}-|r-r^{'}|^{2H}\right).     
    \end{align*}
    \begin{itemize}
        \item Case(i) Suppose that $D_{1}=\{(r,r^{'},s,s^{'})\in [0,t]^{4}|r<r^{'}<s<s^{'}\}$, let $r^{'}-r=a$, $s-r^{'}=b$, $s^{'}-s=c$. Then, there exists a positive constant $K_{1}$ such that 
        \begin{align*}
            \lambda\rho-\mu^{2}\geq K_{1}((a+b)^{2H}c^{2H}+a^{2H}(b+c)^{2H})
        \end{align*}
        and
        \begin{align*}
            \mu=\frac{1}{2}((a+b+c)^{2H}+b^{2H}-a^{2H}-c^{2H}).
        \end{align*}
        \item Case(ii) Suppose that $D_{2}=\{(r,r^{'},s,s^{'})\in [0,t]^{4}|r<r^{'}<s^{'}<s\}$, let $r^{'}-r=a$, $s^{'}-r^{'}=b$, $s-s^{'}=c$. Then, there exists a positive constant $K_{2}$ such that \begin{align*}
            \lambda\rho-\mu^{2}\geq K_{2}b^{2H}(c^{2H}+a^{2H})
        \end{align*}
        and
        \begin{align*}
            \mu=\frac{1}{2}((a+b)^{2H}+(b+c)^{2H}-a^{2H}-c^{2H}).
        \end{align*}
        \item Case(iii) Suppose that $D_{2}=\{(r,r^{'},s,s^{'})\in [0,t]^{4}|r<s<r^{'}<s^{'}\}$, let $s-r=a$, $r^{'}-s=b$, $s^{'}-r^{'}=c$. Then, there exists a positive constant $K_{3}$ such that \begin{align*}
            \lambda\rho-\mu^{2}\geq K_{3}c^{2H}a^{2H}
        \end{align*}
        and \
        \begin{align*}
            \mu=\frac{1}{2}((a+b+c)^{2H}+b^{2H}-(a+b)^{2H}-(c+b)^{2H}).
        \end{align*}
    \end{itemize}
\end{lemma}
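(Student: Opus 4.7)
The plan is to proceed in two stages. First, the formulas for $\mu$ in each of the three cases are obtained by direct algebraic substitution of the change of variables into $\mu=\tfrac12(|s'-r|^{2H}+|s-r'|^{2H}-|s'-s|^{2H}-|r-r'|^{2H})$. For instance, in Case (i) the ordering $r<r'<s<s'$ gives $s'-r=a+b+c$, $s-r'=b$, $s'-s=c$, $r-r'=-a$; substituting these in produces the stated expression, and Cases (ii)--(iii) are handled identically by reading off the four pairwise distances from the corresponding change of variables.

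For the lower bounds on $\lambda\rho-\mu^2$, the central observation is that $\lambda\rho-\mu^2$ is the determinant of the $2\times 2$ covariance matrix of the Gaussian vector $(X_1,X_2):=(B^{H,1}_s-B^{H,1}_r,\,B^{H,1}_{s'}-B^{H,1}_{r'})$, so
\[
    \lambda\rho-\mu^2 \;=\; \lambda\,\mathrm{Var}(X_2\mid X_1) \;=\; \rho\,\mathrm{Var}(X_1\mid X_2).
\]
The strategy is to bound each of these conditional variances from below by enlarging the conditioning $\sigma$-algebra---which only decreases the variance, by \Cref{Conditional Variance inequality}---so that the target increment reduces (up to a measurable, hence variance-preserving, shift) to a single fBm value $B^{H,1}_t$ conditioned on values at points lying outside a neighborhood of $t$. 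The local nondeterminism bound \cref{local nondeterminism}, together with its standard extension to multi-point and continuum conditioning, then supplies the universal estimate $\kappa\,r^{2H}$, where $r$ is the distance from $t$ to the conditioning set.

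Concretely, in Case (i), since $r'\in[r,s]$, I would enlarge $\sigma(X_1)$ to $\sigma(B^{H,1}_u:u\in[r,s])$; this makes $B^{H,1}_{r'}$ measurable, so $X_2$ reduces to $B^{H,1}_{s'}-B^{H,1}_s$ plus a measurable constant. Since $s'$ has distance $c$ from $[r,s]$, local nondeterminism yields $\mathrm{Var}(X_2\mid X_1)\geq \kappa c^{2H}$; reversing the roles (using $s\in[r',s']$) symmetrically gives $\mathrm{Var}(X_1\mid X_2)\geq \kappa a^{2H}$. Multiplying by $\lambda$ and $\rho$ respectively and averaging produces the stated bound with $K_1=\kappa/2$. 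In Case (ii), where $[r',s']\subset[r,s]$, I would enlarge to $\sigma(B^{H,1}_u:u\in[r',s'])$ and then separately adjoin either $B^{H,1}_r$ or $B^{H,1}_s$ to obtain $\mathrm{Var}(X_1\mid X_2)\geq\kappa a^{2H}$ and $\mathrm{Var}(X_1\mid X_2)\geq\kappa c^{2H}$; averaging and multiplying by $\rho=b^{2H}$ gives the claim. Case (iii) is the cleanest: enlarging $\sigma(X_1)$ to $\sigma(B^{H,1}_u:u\in[r,s],\,B^{H,1}_{r'})$ renders $B^{H,1}_{r'}$ measurable, and since the distance from $s'$ to the conditioning set equals $c$, local nondeterminism directly gives $\mathrm{Var}(X_2\mid X_1)\geq\kappa c^{2H}$, whence $\lambda\rho-\mu^2\geq\kappa a^{2H}c^{2H}$.

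The main technical obstacle is choosing, for each ordering of $r,r',s,s'$, a conditioning $\sigma$-algebra that simultaneously (a) contains the one generated by $X_1$ (or $X_2$) so that the monotonicity inequality applies, and (b) reduces the target increment to a single fBm value whose distance to the conditioning set is exactly one of the gap lengths $a$, $b$, or $c$. The three cases in the lemma correspond precisely to the three inequivalent orderings (up to the symmetry swapping primed and unprimed variables), which is why a single unified argument does not suffice; each case requires slightly different bookkeeping of which endpoints are absorbed into the conditioning in order to obtain the correct power of the correct parameter.
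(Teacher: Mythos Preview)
The paper does not supply its own proof of this lemma; it is quoted verbatim from Appendix~B of \citep{jung2014tanaka} and used as a black box. Your argument is therefore being compared against the original source rather than anything in the present paper.

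Your approach is correct and is genuinely different from the one in \citep{jung2014tanaka}. There the bounds on $\lambda\rho-\mu^2$ are obtained by direct algebraic computation: one writes $\lambda$, $\rho$, $\mu$ explicitly in the variables $a,b,c$ and exploits elementary inequalities for the map $x\mapsto x^{2H}$ (concavity/convexity and subadditivity-type estimates) to bound the resulting polynomial-like expressions from below. Your route instead interprets $\lambda\rho-\mu^2$ as a Gaussian covariance determinant, factors it as $\lambda\,\mathrm{Var}(X_2\mid X_1)=\rho\,\mathrm{Var}(X_1\mid X_2)$, and then combines \Cref{Conditional Variance inequality} with the strong local nondeterminism \cref{local nondeterminism} after enlarging the conditioning $\sigma$-algebra. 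This is cleaner and more conceptual: it avoids case-specific algebra, yields explicit constants ($K_i=\kappa$ or $\kappa/2$), and would transfer unchanged to any Gaussian process satisfying a strong LND bound. The price is the appeal to the continuum-conditioning form of LND, which you correctly flag as a standard extension of \cref{local nondeterminism}; since the paper states LND in the form $\mathrm{Var}(B^H_t\mid B^H_s,\,|t-s|>r)\geq\kappa r^{2H}$, that extension is already implicit. The computational proof in \citep{jung2014tanaka}, by contrast, needs no probabilistic input beyond the covariance formula but is tied to the specific power-law structure of fBm.
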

\begin{lemma}[{Lemma 5.5 in \citep{jaramillo2017asymptotic}}]
\label{betafunction}
    Let $c,\beta,\alpha$ and $\gamma$ be real numbers such that $c,\beta>0$, $\alpha>-1$ and $1+\alpha+\gamma\beta<0$. Then we have
    \begin{align*}
        \int_{0}^{\infty}a^{\alpha}(c+a^{\beta})^{\gamma}da=\beta^{-1}c^{\frac{1+\alpha+\gamma\beta}{\beta}}B\left(\frac{1+\alpha}{\beta},-\frac{1+\alpha+\gamma\beta}{\beta}\right),
    \end{align*}
    where $B(\cdot,\cdot)$ is the Beta function.
\end{lemma}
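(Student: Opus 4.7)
The plan is to reduce the integral on the left to the standard Beta-function representation $B(p,q)=\int_0^\infty t^{p-1}(1+t)^{-p-q}\,dt$ via a single change of variables, and then read off the parameters.

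First I would rescale so that $c$ factors out. Specifically, substitute $t = a^{\beta}/c$, equivalently $a = c^{1/\beta}\,t^{1/\beta}$, so that $da = \beta^{-1}c^{1/\beta}\,t^{1/\beta-1}\,dt$. Under this change of variables,
\begin{align*}
a^{\alpha} &= c^{\alpha/\beta}\,t^{\alpha/\beta}, \qquad (c+a^{\beta})^{\gamma} = c^{\gamma}(1+t)^{\gamma},
\end{align*}
and the limits $a=0,\infty$ map to $t=0,\infty$. Collecting factors of $c$ gives the exponent $\alpha/\beta + \gamma + 1/\beta = (1+\alpha+\gamma\beta)/\beta$, so
\begin{align*}
\int_{0}^{\infty} a^{\alpha}(c+a^{\beta})^{\gamma}\,da = \frac{c^{(1+\alpha+\gamma\beta)/\beta}}{\beta}\int_{0}^{\infty} t^{(1+\alpha)/\beta - 1}(1+t)^{\gamma}\,dt.
\end{align*}

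Next I would identify the remaining $t$-integral with a Beta function. The standard identity $\int_0^\infty t^{p-1}(1+t)^{-(p+q)}\,dt = B(p,q)$ holds whenever $p>0$ and $q>0$. Setting $p=(1+\alpha)/\beta$ and $p+q=-\gamma$ yields $q = -\gamma - (1+\alpha)/\beta = -(1+\alpha+\gamma\beta)/\beta$. The hypothesis $\alpha>-1$ together with $\beta>0$ guarantees $p>0$, and the hypothesis $1+\alpha+\gamma\beta<0$ guarantees $q>0$, so convergence at both endpoints holds and the identification is valid. Substituting back yields the claimed formula
\begin{align*}
\int_{0}^{\infty} a^{\alpha}(c+a^{\beta})^{\gamma}\,da = \beta^{-1}\,c^{(1+\alpha+\gamma\beta)/\beta}\, B\!\left(\frac{1+\alpha}{\beta},\,-\frac{1+\alpha+\gamma\beta}{\beta}\right).
\end{align*}

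There is no real obstacle here; the entire argument is bookkeeping of exponents and recognizing a standard Beta-function integral. The only points requiring care are (i) verifying that each of the two parameters inside $B(\cdot,\cdot)$ is strictly positive under the stated hypotheses (which is exactly what $\alpha>-1$ and $1+\alpha+\gamma\beta<0$ ensure), and (ii) confirming that the exponent of $c$ simplifies correctly. Alternative routes such as the substitution $u = a^\beta/(c+a^\beta)$ land directly on the Beta integral in its other standard form $\int_0^1 u^{p-1}(1-u)^{q-1}\,du$, but the computation above seems cleanest.
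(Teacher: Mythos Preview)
Your proof is correct and complete. The paper itself does not supply a proof of this lemma: it is stated in the appendix with a citation to \cite{jaramillo2017asymptotic} and used as a known identity. Your substitution $t=a^{\beta}/c$ followed by the identification with the standard Beta representation $B(p,q)=\int_0^\infty t^{p-1}(1+t)^{-p-q}\,dt$ is exactly the natural argument, and you have correctly checked that the hypotheses $\alpha>-1$ and $1+\alpha+\gamma\beta<0$ are precisely what make both Beta parameters positive.
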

\begin{lemma}
\label{condition for convergence in L^{p}}
    Let $\{X_{n}\}_{n\in \mathbb{N}}$ be a sequence of random variables. Then $X_{n}$ converges in $L^{2p}(\Omega)$ for some $p\in \mathbb{N}$ if there exists some $r\in\R$ such that $\mathbb{E}[X_{n}^{2p-q}X_{m}^{q}]$ converges to $r$ as $m,n\to \infty$ for all $1\leq q\leq 2p$. 
    \begin{proof}
    Suppose that $\mathbb{E}[X_{n}^{p-q}X_{m}^{q}]$ converges to some $r\in \mathbb{R}$ as $m,n\to \infty$ for all $0\leq q\leq 2p$. Obviously it implies $X_{n}\in L^{2p}(\Omega)$. Then 
    \begin{align*}
        \mathbb{E}[|X_{n}-X_{m}|^{2p}]&=\mathbb{E}[(X_{n}-X_{m})^{2p}]\\
        &=\sum_{q=0}^{2p} {2p\choose q}(-1)^{q}\mathbb{E}[X_{n}^{q}X_{m}^{2p-q}].
    \end{align*}
    Letting $m,n$ converge to $\infty$, we get 
    \begin{align*}
        \lim_{n,m\to\infty}\mathbb{E}[|X_{n}-X_{m}|^{2p}]=r\sum_{q=1}^{2p}(-1)^{q}{2p\choose q}=r(1-1)^{2p}=0,
    \end{align*}
    which implies $X_{n}$ is a Cauchy sequence in $L^{2p}(\Omega)$. 
    \end{proof}
\end{lemma}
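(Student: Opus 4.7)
My plan is to exploit completeness of $L^{2p}(\Omega)$ and reduce convergence to showing that $\{X_{n}\}$ is Cauchy in that space. Since $2p$ is an even positive integer, $|X_{n}-X_{m}|^{2p}=(X_{n}-X_{m})^{2p}$, which can be expanded by the binomial theorem into a sum whose terms are exactly the mixed moments that appear in the hypothesis.

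Concretely, I would write
\begin{align*}
    \mathbb{E}\!\left[|X_{n}-X_{m}|^{2p}\right]
    &= \mathbb{E}\!\left[(X_{n}-X_{m})^{2p}\right]
    = \sum_{q=0}^{2p}\binom{2p}{q}(-1)^{q}\,\mathbb{E}\!\left[X_{n}^{\,2p-q}X_{m}^{\,q}\right].
\end{align*}
Now I let $m,n\to\infty$. The hypothesis provides the limit $r$ for each mixed moment with $1\le q\le 2p$; the boundary term $q=0$ reduces to $\mathbb{E}[X_{n}^{2p}]$, which by symmetry in the hypothesis (take $q=2p$ and interchange the labels $m,n$) must also tend to $r$. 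Thus the whole weighted sum tends to
\begin{align*}
    r\sum_{q=0}^{2p}\binom{2p}{q}(-1)^{q} \;=\; r(1-1)^{2p} \;=\; 0,
\end{align*}
so that $\mathbb{E}[|X_{n}-X_{m}|^{2p}]\to 0$ as $m,n\to\infty$. By completeness of $L^{2p}(\Omega)$, the Cauchy sequence $\{X_{n}\}$ has a limit in $L^{2p}(\Omega)$.

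There is no genuine obstacle here; the only subtlety is verifying that the hypothesis, stated for $1\le q\le 2p$, really does cover the entire binomial expansion. The $q=2p$ case gives convergence of $\mathbb{E}[X_{m}^{2p}]$ to $r$ (uniformly in $n$, since the expression does not depend on $n$), and by relabeling one obtains the $q=0$ case as well, so nothing is missing. Implicit throughout is that the hypothesis in particular forces each $X_{n}\in L^{2p}(\Omega)$ for large $n$ (taking $m=n$ in the $q=p$ case, or directly from $q=2p$), which legitimizes all the expectations above.
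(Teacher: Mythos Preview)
Your proof is correct and follows essentially the same approach as the paper: expand $\mathbb{E}[|X_n-X_m|^{2p}]$ via the binomial theorem, use the hypothesis on each term, and invoke the alternating binomial identity together with completeness of $L^{2p}(\Omega)$. If anything, your treatment of the $q=0$ boundary case (deducing it from $q=2p$ by relabeling) is more careful than the paper's, which simply extends the hypothesis to $0\le q\le 2p$ without comment.
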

\begin{remark}
  The condition stated in \Cref{condition for convergence in L^{p}} is in fact both sufficient and necessary. However, since the necessity is not required in this article, we leave its verification to the interested reader.
\end{remark}
\begin{lemma}
\label{even L^{p}}
     If $X_{n}$ converges to $X$ in $L^{p}(\Omega
    )$ for all $1\leq p<\infty$, then $X_{n}$ converges to $X$ in $L^{q}(\Omega)$ for all $q<p$.
    \begin{proof}
        By Jensen's inequality, we have
        \begin{align*}
        \mathbb{E}[|X_{n}-X|^{q}]\leq\mathbb{E}[|X_{n}-X|^{p}]^{\frac{q}{p}},
        \end{align*}
        which implies that $X_{n}$ converges to $X$ in $L^{q}(\Omega)$ since $X_{n}$ converges to $X$ in $L^{p}(\Omega)$. 
    \end{proof}
\end{lemma}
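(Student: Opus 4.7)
The plan is a direct appeal to Jensen's inequality exploiting that $\mathbb{P}$ is a probability measure. Since $q < p$, the ratio $q/p$ lies in $(0,1]$, so the function $\varphi(x) = x^{q/p}$ is concave on $[0,\infty)$. Applying Jensen's inequality to the non-negative random variable $|X_n - X|^p$ with respect to $\mathbb{P}$ yields
\[
    \mathbb{E}[|X_n - X|^q] = \mathbb{E}\!\left[\bigl(|X_n - X|^p\bigr)^{q/p}\right] \leq \bigl(\mathbb{E}[|X_n - X|^p]\bigr)^{q/p}.
\]
The right-hand side tends to zero as $n \to \infty$ by the hypothesis of $L^p$-convergence, and hence $X_n \to X$ in $L^q(\Omega)$.

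An alternative route, yielding the same estimate, is to invoke H\"older's inequality with conjugate exponents $p/q$ and $p/(p-q)$ applied to $|X_n-X|^q \cdot 1$; the second factor collapses to a constant since $\mathbb{E}[1^{p/(p-q)}] = \mathbb{P}(\Omega) = 1$, leaving exactly the same bound. There is no genuine obstacle here: the lemma is a textbook consequence of the nesting $L^p(\Omega) \subset L^q(\Omega)$ on a finite measure space, and the only points to verify are that $q \leq p$ (so that $\varphi$ is concave) and that we are working on a probability space (so that no normalization factor $\mathbb{P}(\Omega)$ appears).
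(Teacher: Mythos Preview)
Your proof is correct and follows exactly the same approach as the paper: a direct application of Jensen's inequality (using concavity of $x^{q/p}$) to obtain $\mathbb{E}[|X_n-X|^q]\le\bigl(\mathbb{E}[|X_n-X|^p]\bigr)^{q/p}$, from which $L^q$-convergence follows. Your additional remarks on the concavity hypothesis and the equivalent H\"older route are welcome elaborations but do not change the method.
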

\begin{lemma}
\label{conditional expectation extra independent sigma}
    Let $X$ be an integrable random variable on a probability space $(\Omega,\mathcal{F},\mathbb{P})$ and $\mathcal{G},\mathcal{H}\subset\mathcal{F}$ be two $\sigma$-algebras. Assume that $\sigma(\sigma(X)\cup\mathcal{H})$ is independent of $\mathcal{G}$, then we have
    \begin{align*}
        \mathbb{E}[X|\sigma(\mathcal{G\cup\mathcal{H}})]=\mathbb{E}[X|\mathcal{H}]
    \end{align*}
    almost surely.
    \begin{proof}
        It suffices to show that 
        \begin{itemize}
            \item $\mathbb{E}[X|\mathcal{H}]$ is $\sigma(\mathcal{G\cup\mathcal{H}})$ measurable,
            \item $\mathbb{E}[X|\mathcal{H}]$ is integrable,
            \item for all $A\in \sigma(\mathcal{G\cup\mathcal{H}}):$
            \begin{align*}
                \int_{A}\mathbb{E}[X|\mathcal{H}]d\mathbb{P}=\int_{A}Xd\mathbb{P}.
            \end{align*}
        \end{itemize}
        The first two are trivial as $\mathbb{E}[X|\mathcal{H}]$ is $\mathcal{H}$ measurable and $X$ is integrable. Note that 
        $\sigma(\mathcal{G}\cup\mathcal{H})=\sigma(\{E\cap F; E\in\mathcal{G},F\in\mathcal{H}\})$, it therefore suffices to show for all $E\in\mathcal{G},F\in\mathcal{H}$, we have
        \begin{align*}
            \int_{E\cap F}\mathbb{E}[X|\mathcal{H}]d\mathbb{P}=\int_{E\cap F}Xd\mathbb{P}.
        \end{align*}
        Since $\mathbb{E}[X|\mathcal{H}]1_{F}$ is $\mathcal{H}$ measurable and consequently is $\sigma(\sigma(X)\cup \mathcal{H})$ measurable, by independence we have
        \begin{align*}
            \int_{E\cap F}\mathbb{E}[X|\mathcal{H}]d\mathbb{P}&=\mathbb{E}[1_{F}1_{E}\mathbb{E}[X|\mathcal{H}]]\\
            &=\mathbb{E}[1_{E}]\mathbb{E}[1_{F}\mathbb{E}[X|\mathcal{H}]]\\
            &=\mathbb{E}[1_{E}]\mathbb{E}[1_{F}X].
        \end{align*}
        Since $1_{F}X$ is $\sigma(\sigma(X)\cup \mathcal{H})$ measurable, then by independence we have
        \begin{align*}
             \int_{E\cap F}\mathbb{E}[X|\mathcal{H}]d\mathbb{P}&=\mathbb{E}[1_{E}1_{F}X]\\
             &=\int_{E\cap F}Xd\mathbb{P}
        \end{align*}
        as required.
    \end{proof}
\end{lemma}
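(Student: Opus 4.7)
The plan is to verify the three defining properties of conditional expectation for $\mathbb{E}[X\mid\mathcal{H}]$ with respect to the larger $\sigma$-algebra $\sigma(\mathcal{G}\cup\mathcal{H})$. Measurability is immediate since $\mathcal{H}\subset \sigma(\mathcal{G}\cup\mathcal{H})$, and integrability is inherited from $X$. The real content is the integral identity
\[
    \int_{A}\mathbb{E}[X\mid\mathcal{H}]\,d\mathbb{P}=\int_{A}X\,d\mathbb{P}\qquad\text{for all }A\in\sigma(\mathcal{G}\cup\mathcal{H}).
\]

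To handle this, I would appeal to the monotone class / $\pi$-$\lambda$ argument: the collection $\Pi=\{E\cap F:E\in\mathcal{G},\,F\in\mathcal{H}\}$ is a $\pi$-system generating $\sigma(\mathcal{G}\cup\mathcal{H})$, so by the uniqueness of conditional expectation it suffices to verify the identity for $A=E\cap F$ with $E\in\mathcal{G}$ and $F\in\mathcal{H}$. Write
\[
    \int_{E\cap F}\mathbb{E}[X\mid\mathcal{H}]\,d\mathbb{P}=\mathbb{E}\!\left[1_{E}\cdot 1_{F}\mathbb{E}[X\mid\mathcal{H}]\right].
\]
Here $1_{F}\mathbb{E}[X\mid\mathcal{H}]$ is $\mathcal{H}$-measurable, hence $\sigma(\sigma(X)\cup\mathcal{H})$-measurable, while $1_{E}$ is $\mathcal{G}$-measurable; the hypothesis that $\mathcal{G}$ is independent of $\sigma(\sigma(X)\cup\mathcal{H})$ then lets me factor the expectation as $\mathbb{E}[1_{E}]\cdot\mathbb{E}[1_{F}\mathbb{E}[X\mid\mathcal{H}]]$. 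The defining property of $\mathbb{E}[X\mid\mathcal{H}]$ applied to $F\in\mathcal{H}$ replaces the inner factor by $\mathbb{E}[1_{F}X]$, and a second application of independence (now using that $1_{F}X$ is $\sigma(\sigma(X)\cup\mathcal{H})$-measurable) gives $\mathbb{E}[1_{E}]\mathbb{E}[1_{F}X]=\mathbb{E}[1_{E}1_{F}X]=\int_{E\cap F}X\,d\mathbb{P}$, as required.

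The one subtle point worth care is the direction of the independence assumption: the lemma requires $\mathcal{G}$ to be independent not merely of $\mathcal{H}$ but of the whole $\sigma$-algebra $\sigma(\sigma(X)\cup\mathcal{H})$, since otherwise the second factoring step (applied to $1_{F}X$, which is generally not $\mathcal{H}$-measurable) would fail. Apart from this, the argument is a routine application of the factorisation lemma for independent $\sigma$-algebras together with the $\pi$-system uniqueness for conditional expectation, and no further technical ingredients are needed.
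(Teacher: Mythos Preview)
Your proposal is correct and follows essentially the same approach as the paper: verify the defining properties of conditional expectation, reduce via the $\pi$-system $\{E\cap F:E\in\mathcal{G},\,F\in\mathcal{H}\}$ generating $\sigma(\mathcal{G}\cup\mathcal{H})$, then factor twice using independence (once for $1_F\mathbb{E}[X\mid\mathcal{H}]$ and once for $1_F X$). Your remark on why the full independence of $\mathcal{G}$ from $\sigma(\sigma(X)\cup\mathcal{H})$ is needed is a nice addition not made explicit in the paper.
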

\begin{lemma}
\label{indicator representation}
    Let $X$ be a continuous random variable defined on a probability space $(\Omega,\mathcal{F},\mathbb{P})$. Then for any Borel set $A$, we have 
    \begin{align*}
        \int_{A}\int_{\R}\frac{1}{2\pi}e^{i(X-x)p}e^{-\frac{\epsilon p^{2}}{2}}dpdx \to 1_{A}(X)
    \end{align*}
    in $L^{n}(\Omega)$ as $\epsilon\to0$ for $1\leq n <\infty$. 
    \begin{proof}
        It is clear that $\int_{A}\int_{\R}\frac{1}{2\pi}e^{i(X-x)p}e^{-\frac{\epsilon p^{2}}{2}}dpdx$ and $1_{A}(X)$ are in $L^{n}(\Omega)$. Since the intervals generate Borel sets, it suffices to show that for any interval $(a,b)$,
        \begin{align*}
            \mathbb{E}\left[\left|\int_{a}^{b}\int_{\R}\frac{1}{2\pi}e^{i(X-x)p}e^{-\frac{\epsilon p^{2}}{2}}dpdx-1_{(a,b)}(X)\right|^{n}\right]
        \end{align*}
        converges to $0$ as $\epsilon\to 0$. By \Cref{condition for convergence in L^{p}} and \Cref{even L^{p}}, it is enough to show
        \begin{align*}
         \mathbb{E}[\left(\int_{a}^{b}\int_{\R}\frac{1}{2\pi}e^{i(X-x)p}e^{-\frac{\epsilon_{1} p^{2}}{2}}dpdx\right)^{n-m}\left(\int_{a}^{b}\int_{\R}\frac{1}{2\pi}e^{i(X-x)p}e^{-\frac{\epsilon_{2} p^{2}}{2}}dpdx\right)^{m}]\to \mathbb{E}[1_{(a,b)}(X)]
        \end{align*}
        for all even $n$ and $1\leq m\leq n$ as $\epsilon_{1},\epsilon_{2}\to 0$. Let $F(x)$ be the cumulative function induced by $\mathbb{P}(X<x)$ which is continuous in this case, then by Fubini's theorem, we have
        \begin{align*}
            &\mathbb{E}[\left(\int_{a}^{b}\int_{\R}\frac{1}{2\pi}e^{i(X-x)p}e^{-\frac{\epsilon_{1} p^{2}}{2}}dpdx\right)^{n-m}\left(\int_{a}^{b}\int_{\R}\frac{1}{2\pi}e^{i(X-x)p}e^{-\frac{\epsilon_{2} p^{2}}{2}}dpdx\right)^{m}]\\
            &=\int_{(a,b)^{n}}\int_{\R^{n}}\frac{1}{(2\pi)^{n}}\mathbb{E}[e^{i\sum_{j=1}^{n}(X-x_{j})p_{j}}]e^{-\frac{\epsilon_{1} \sum_{j=1}^{n-m}p_{j}^{2}+\epsilon\sum_{j=n-m+1}^{n}p_{j}^{2}}{2}}dpdx\\
            &=\int_{(a,b)^{n}}\int_{\R^{n}}\frac{1}{(2\pi)^{n}}\int_{\R}e^{i\sum_{j=1}^{n}(y-x_{j})p_{j}}e^{-\frac{\epsilon_{1} \sum_{j=1}^{n-m}p_{j}^{2}+\epsilon\sum_{j=n-m+1}^{n}p_{j}^{2}}{2}}dF(y)dpdx\\
            &=\int_{\R}\int_{(a,b)^{n}}\frac{1}{(\sqrt{2\pi\epsilon_{1}})^{n-m}}\frac{1}{(\sqrt{2\pi\epsilon_{2}})^{m}}e^{-\sum_{j=1}^{n-m}\frac{(y-x_{j})^{2}}{2\epsilon_{1}}}e^{-\sum_{j=1}^{m}\frac{(y-x_{j})^{2}}{2\epsilon_{2}}}dxdF(y)\\
            &=\int_{\R}f_{\epsilon_{1}}^{n-m}(y)f_{\epsilon_{2}}^{m}(y)dF(y),
        \end{align*}
        where $f_{\epsilon}(y)=\mathbb{P}(\frac{a-y}{\sqrt{\epsilon}}<Z<\frac{b-y}{\sqrt{\epsilon}})$ is the probability of a standard normal random variable $Z$ staying between $(\frac{a-y}{\sqrt{\epsilon}},\frac{b-y}{\sqrt{\epsilon}})$. Note that, when $y\in (a,b)$
        \begin{align*}
            \lim_{\epsilon\to 0}f_{\epsilon}(y)=\lim_{\epsilon\to0}\mathbb{P}(\frac{a-y}{\sqrt{\epsilon}}<Z<\frac{b-y}{\sqrt{\epsilon}})=1,
        \end{align*}
        when $y\in(-\infty,a)\cup(b,\infty)$,
        \begin{align*}
            \lim_{\epsilon\to0}f_{\epsilon}(y)=\lim_{\epsilon\to0}\mathbb{P}(\frac{a-y}{\sqrt{\epsilon}}<Z<\frac{b-y}{\sqrt{\epsilon}})=0.
        \end{align*}
        Since $f_{\epsilon}(y)$ is bounded by $1$, then dominated convergence theorem and the continuity of $F(y)$, we have
        \begin{align*}
            \lim_{\epsilon_{1}\to0\epsilon_{2}\to0}\int_{\R}f_{\epsilon_{1}}^{n-m}(y)f_{\epsilon_{2}}^{m}(y)dF(y)&=\lim_{\epsilon_{1}\to0\epsilon_{2}\to0}\int_{(a,b)}f_{\epsilon_{1}}^{n-m}(y)f_{\epsilon_{2}}^{m}(y)dF(y)\\
            &\quad+\lim_{\epsilon_{1}\to0\epsilon_{2}\to0}\int_{(-\infty,a)\cup(b,\infty)}f_{\epsilon_{1}}^{n-m}(y)f_{\epsilon_{2}}^{m}(y)dF(y)\\
            &=\int_{a}^{b}dF(y)=\mathbb{E}[1_{(a,b)}(X)]
        \end{align*}
        as required.
    \end{proof}
\end{lemma}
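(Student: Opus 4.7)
The plan is to recognize the expression as the Gaussian mollification of $1_A$ evaluated at $X$, establish pointwise almost-sure convergence using the continuity of the law of $X$, and then upgrade to $L^n$ convergence by dominated convergence. This sidesteps the Cauchy-type moment criterion used elsewhere in the paper.

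First I would evaluate the inner integral in closed form. Completing the square in $p$ gives
\begin{align*}
    \int_{\R} \frac{1}{2\pi}\, e^{i(X-x)p}\, e^{-\frac{\epsilon p^{2}}{2}}\, dp
    = \phi_{\epsilon}(X-x), \qquad
    \phi_{\epsilon}(u) := \frac{1}{\sqrt{2\pi\epsilon}}\, e^{-u^{2}/(2\epsilon)},
\end{align*}
so the full expression equals $g_{\epsilon}(X)$, where
\begin{align*}
    g_{\epsilon}(y) := \int_{A} \phi_{\epsilon}(y-x)\,dx = \mathbb{P}\!\bigl(y + \sqrt{\epsilon}\,Z \in A\bigr),\quad Z \sim \mathcal{N}(0,1).
\end{align*}
Crucially, $0 \leq g_\epsilon(y) \leq 1$ for all $y$, which will drive the dominated convergence step.

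Second, for an interval $A = (a,b)$ the substitution yields $g_\epsilon(y) = \mathbb{P}\!\bigl(\tfrac{a-y}{\sqrt{\epsilon}} < Z < \tfrac{b-y}{\sqrt{\epsilon}}\bigr)$, which tends to $1_{(a,b)}(y)$ as $\epsilon \to 0$ for every $y \notin \{a,b\}$. Since $X$ has continuous distribution, $\mathbb{P}(X \in \{a,b\}) = 0$, and hence $g_\epsilon(X) \to 1_{(a,b)}(X)$ almost surely; the bound $|g_\epsilon(X) - 1_{(a,b)}(X)|^n \leq 2^n$ then upgrades this to $L^n$ convergence by bounded convergence.

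Finally, I would extend to an arbitrary Borel set $A$ by a Dynkin $\pi$-$\lambda$ argument: the class of Borel sets for which the conclusion holds contains the intervals, is closed under complementation (because $g_\epsilon^{A^c} = 1 - g_\epsilon^{A}$), and is closed under disjoint countable unions (via $g_\epsilon^{A} = \sum_{n} g_\epsilon^{A_n}$ combined with bounded dominated convergence). The main obstacle is this last step: if ``continuous'' is interpreted only as continuity of the CDF, verifying closure under countable disjoint unions is slightly technical. It is bypassed entirely if $X$ possesses a density, for then Lebesgue differentiation gives $g_\epsilon(y) \to 1_A(y)$ for Lebesgue-a.e.~$y$ for \emph{any} Borel $A$; absolute continuity of the law of $X$ transfers this to almost-sure convergence, after which bounded convergence closes the argument in one stroke.
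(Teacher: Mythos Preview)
Your argument is correct and in fact more economical than the paper's. Both proofs land on the same key identity---the inner $p$-integral collapses to the heat kernel $\phi_\epsilon(X-x)$, so the whole expression is $g_\epsilon(X)$ with $0\le g_\epsilon\le 1$ and $g_\epsilon(y)\to 1_{(a,b)}(y)$ off $\{a,b\}$---but then diverge in how they upgrade to $L^n$. The paper detours through the Cauchy-in-$L^{2p}$ moment criterion: it expands the mixed moments $\mathbb{E}[g_{\epsilon_1}(X)^{n-m}g_{\epsilon_2}(X)^m]$ via Fubini, rewrites them as $\int f_{\epsilon_1}^{n-m}f_{\epsilon_2}^{m}\,dF$, and applies dominated convergence there. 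You instead observe that $g_\epsilon(X)\to 1_{(a,b)}(X)$ almost surely (the continuity of the law kills the boundary) and invoke bounded convergence once; this bypasses the moment machinery entirely and is the cleaner route. On the passage from intervals to arbitrary Borel $A$ you are actually \emph{more} careful than the paper, which simply asserts that intervals generate the Borel $\sigma$-algebra without justifying why the $L^n$-convergence inherits; your remark that absolute continuity of the law together with Lebesgue-point convergence of $\phi_\epsilon * 1_A$ settles general $A$ in one stroke is the right way to close that gap, and your honest flag that the $\pi$--$\lambda$ route is delicate under the weaker atomless hypothesis is well placed.
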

\begin{remark}
  We then denote its limit as
    \begin{align*}
          \lim_{\epsilon\to0}\int_{A}\int_{\R}\frac{1}{2\pi}e^{i(X-x)p}e^{-\frac{\epsilon p^{2}}{2}}dpdx:=\int_{A}\int_{\R}\frac{1}{2\pi}e^{i(X-x)}dpdx.
    \end{align*}
    The result can be extended to any bounded Borel measurable function. However, since this result is not required in this article, we leave its verification to the interested reader.
\end{remark}
\begin{lemma}
\label{uncorrelated means independent}
    Let $X$ and $Y$ be two uncorrelated jointly Gaussian random variables, then they are independent.
\end{lemma}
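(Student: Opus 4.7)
The plan is to prove independence through the joint characteristic function. First I would reduce to the centered case by working with $\tilde X = X - \mathbb{E}[X]$ and $\tilde Y = Y - \mathbb{E}[Y]$; this reduction is harmless because uncorrelatedness is translation-invariant and independence of the centered pair is equivalent to independence of the original pair.

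Next, because $(X,Y)$ is by hypothesis jointly Gaussian, every linear combination $sX+tY$ is itself a one-dimensional Gaussian random variable. Using the closed-form characteristic function of a centered Gaussian, I would compute
\begin{align*}
\varphi_{X,Y}(s,t) \;=\; \mathbb{E}\!\left[e^{i(s\tilde X + t\tilde Y)}\right] \;=\; \exp\!\left(-\tfrac{1}{2}\mathrm{Var}(s\tilde X+t\tilde Y)\right) \;=\; \exp\!\left(-\tfrac{1}{2}\bigl(s^{2}\sigma_{X}^{2} + 2st\,\mathrm{Cov}(X,Y) + t^{2}\sigma_{Y}^{2}\bigr)\right).
\end{align*}
Invoking the hypothesis $\mathrm{Cov}(X,Y)=0$, the cross term vanishes and the right-hand side factors as $\exp(-\tfrac12 s^2\sigma_X^2)\cdot \exp(-\tfrac12 t^2\sigma_Y^2) = \varphi_{X}(s)\,\varphi_{Y}(t)$. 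By the uniqueness theorem for characteristic functions on $\mathbb{R}^{2}$, factorisation of $\varphi_{X,Y}$ as a product of the marginal characteristic functions forces the joint law to equal the product of its marginals, which is precisely independence.

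There is no real obstacle here: the argument is short and self-contained once one accepts the two standard facts that were used, namely the explicit form of the Gaussian characteristic function and the uniqueness theorem. The only conceptual subtlety worth flagging (though not needed for the proof itself) is that the joint Gaussianity assumption cannot be weakened to Gaussianity of the marginals alone; standard counterexamples show that uncorrelated Gaussian marginals that are not jointly Gaussian may fail to be independent, which is why the hypothesis is stated in the form given.
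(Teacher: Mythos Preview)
Your argument is correct and is the standard characteristic-function proof of this classical fact. Note that the paper itself does not supply a proof for this lemma; it is simply stated without proof as a well-known result, so there is no approach to compare against.
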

\begin{lemma}
\label{independence of a bunch of random variables}
    Let $Y$ and $\{X_{j}\}_{1\leq j\leq n}$ be continuous random variables defined on the same probability space $(\Omega,\mathcal{F},\mathbb{P})$.  
Then $\sigma(Y)$ is independent of $\sigma(X_{1},\dots,X_{n})$ if and only if $Y$ is independent of every linear combination of $X_{1},\dots,X_{n}$.

    \begin{proof}
        Suppose that $\sigma(Y)$ is independent of $\sigma(\sigma(X_{1})\cup\cdots\cup\sigma(X_{n}))$, $Y$ is of course independent of any combination of $X_{1},\dots,X_{n}$ as they are $\sigma(\sigma(X_{1})\cup\cdots\cup\sigma(X_{n}))$ measurable. Suppose that $Y$ is independent of any linear combination of $X_{1},\dots,X_{n}$. Note that 
        \begin{align*}
        \sigma(\sigma(X_{1})\cup\cdots\cup\sigma(X_{n}))=\sigma(\{\bigcap_{j=1}^{n}E_{j}; E_{j}\in \sigma(X_{j}), 1\leq j\leq n\}),
        \end{align*}
        and $\{X^{-1}_{j}(B); B\in \mathcal{B}(\R)\} =\sigma(X_{j})$. It suffices to show that for all Boreal sets $A$ and $\{B_{j}\}_{1\leq j\leq n}$, we have
        \begin{align*}
            \mathbb{E}[1_{\{Y\in A\}}\prod_{j=1}^{n}1_{\{X_{j}\in B_{j}\}}]=\mathbb{E}[1_{\{Y\in A\}}]\mathbb{E}[\prod_{j=1}^{n}1_{\{X_{j}\in B_{j}\}}].
        \end{align*}
        By Fubini's theorem and \Cref{indicator representation}, we have
        \begin{align*}
             \mathbb{E}[1_{\{Y\in A\}}\prod_{j=1}^{n}1_{\{X_{j}\in B_{j}\}}]=\frac{1}{(2\pi)^{n+1}}\int_{\R^{n+1}\times A\times B_{1}\times\cdots\times B_{n}}\mathbb{E}[e^{i(Y-y)p+\sum_{j=1}^{n}i(X_{j}-x_{j})q_{j}}]dydpdxdq.
             \end{align*}
             Since $Y$ is independent of all linear combinations of $X_{1},\dots,X_{n}$, we have
             \begin{align*}
                \mathbb{E}[e^{i(Y-y)p+\sum_{j=1}^{n}i(X_{j}-x_{j})q_{j}}]=\mathbb{E}[e^{i(Y-y)p}]\mathbb{E}[e^{\sum_{j=1}^{n}i(X_{j}-x_{j})q_{j}}].
             \end{align*}
             Consequently, we have
             \begin{align*}
                 \mathbb{E}[1_{\{Y\in A\}}\prod_{j=1}^{n}1_{\{X_{j}\in B_{j}\}}]&=\frac{1}{(2\pi)^{n+1}}\int_{\R^{n+1}\times A\times B_{1}\times\cdots\times B_{n}}\mathbb{E}[e^{i(Y-y)p}]\mathbb{E}[e^{\sum_{j=1}^{n}i(X_{j}-x_{j})q_{j}}]dpdydxdq\\
                 &=\frac{1}{2\pi}\int_{\R\times A}\mathbb{E}[e^{i(Y-y)p}]dpdy\times \frac{1}{(2\pi)^{n}}\int_{\R^{n}\times B_{1}\times\cdots\times B_{n}}\mathbb{E}[e^{\sum_{j=1}^{n}i(X_{j}-x_{j})q_{j}}]dxdq\\
                 &=\mathbb{E}[1_{\{Y\in A\}}]\mathbb{E}[\prod_{j=1}^{n}1_{\{X_{j}\in B_{j}\}}],
             \end{align*}
             where the last equality is also an application of Fubini's theorem.
    \end{proof}
\end{lemma}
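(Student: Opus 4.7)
The forward implication is immediate: if $\sigma(Y)$ is independent of $\sigma(X_1,\dots,X_n)$, then any measurable function of $X_1,\dots,X_n$, in particular every linear combination $\sum_j a_j X_j$, is $\sigma(X_1,\dots,X_n)$-measurable and hence independent of $Y$. The content of the lemma is the converse.

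For the converse, the plan is to reduce to a statement about rectangles and then exploit the hypothesis through characteristic functions. Since the collection of events of the form $\{Y\in A\}\cap \{X_1\in B_1\}\cap\cdots\cap\{X_n\in B_n\}$, for Borel $A,B_1,\dots,B_n$, forms a $\pi$-system generating $\sigma(Y,X_1,\dots,X_n)$, it suffices to establish the factorization
\[
    \mathbb{E}\!\left[1_{\{Y\in A\}}\prod_{j=1}^{n}1_{\{X_{j}\in B_{j}\}}\right]
    = \mathbb{E}\!\left[1_{\{Y\in A\}}\right]\mathbb{E}\!\left[\prod_{j=1}^{n}1_{\{X_{j}\in B_{j}\}}\right]
\]
for all such Borel sets.

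The key input is \Cref{indicator representation}, which lets us replace each indicator by a mollified Fourier integral and pass to the limit in $L^{n}(\Omega)$. Applied to $Y$ and to the joint vector $(X_{1},\dots,X_{n})$, this expresses both sides of the desired identity as limits of integrals whose integrands are products of exponentials $e^{i(Y-y)p}$ and $e^{i\sum_{j}(X_{j}-x_{j})q_{j}}$. Since for each fixed $(q_{1},\dots,q_{n})$ the random variable $Z=\sum_{j}q_{j}X_{j}$ is, by hypothesis, independent of $Y$, we obtain the pointwise factorization
\[
    \mathbb{E}\!\left[e^{ipY + i\sum_{j=1}^{n} q_{j}X_{j}}\right]
    = \mathbb{E}\!\left[e^{ipY}\right]\,\mathbb{E}\!\left[e^{i\sum_{j=1}^{n} q_{j}X_{j}}\right]
\]
for every $p,q_{1},\dots,q_{n}\in\mathbb{R}$. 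Plugging this into the mollified Fourier representation, applying Fubini to separate the $p$-integral from the $q$-integrals, and then invoking \Cref{indicator representation} once more in each factor to revert to indicator form, yields exactly the factorization of the joint probability.

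The main technical obstacle is justifying the interchange of the two limits $\epsilon\to 0$ (one for the $Y$-indicator, one for the $X$-indicator) with the Fubini-based rearrangement; this is the reason the proof is phrased in terms of the $L^{n}$-convergence provided by \Cref{indicator representation}, together with the Gaussian damping factors $e^{-\epsilon p^{2}/2}$ and $e^{-\epsilon|q|^{2}/2}$ which make every intermediate integral absolutely convergent. Once these technicalities are handled, the factorization at the level of characteristic functions propagates to the level of rectangles, and a standard $\pi$-$\lambda$ argument concludes that $\sigma(Y)$ is independent of $\sigma(X_{1},\dots,X_{n})$.
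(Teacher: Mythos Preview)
Your proposal is correct and follows essentially the same approach as the paper: reduce to rectangles via a $\pi$-system argument, invoke \Cref{indicator representation} to write the indicators as (mollified) Fourier integrals, use the independence hypothesis to factor the joint characteristic function $\mathbb{E}[e^{ipY+i\sum_j q_jX_j}]$, and then apply Fubini and \Cref{indicator representation} once more to recover the product of indicator expectations. Your explicit mention of the Gaussian damping factors and the $L^n$-convergence from \Cref{indicator representation} is exactly what the paper relies on (though the paper leaves this implicit).
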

\begin{lemma}
\label{Gaussian representation}
    Let $\{X_{i}\}_{1\leq i\leq n}$ be a tuple of jointly Gaussian random variables. For each $X_{j}$, there exists a tuple of real numbers $\{a_{i}\}_{1\leq i\neq j\leq n}$ such that
    \begin{align*}
        \mathbb{E}[X_{j}|X_{i},1\leq i\neq j\leq n]&=\mathbb{E}[X_{j}]+\sum_{i\neq j}a_{i}X_{i},\\
        \mathbb{E}[(X_{j}-\mathbb{E}[X_{j}]-\sum_{i\neq j}a_{i}X_{i})X_{k}]&=0, \forall k\neq j,\\
        \sigma(X_{j}-\mathbb{E}[X_{j}]-\sum_{i\neq j}a_{i}X_{i}) &\independent \sigma(\sigma(X_{1})\cup\cdots\cup\sigma(X_{n-1})).
    \end{align*}
    \begin{proof}
        Without loss of generality, we can assume that all the random variables are centred, and pick $X_{n}$ as a representative.  Let $\Sigma$ be the covariance matrix of $X_{1},\dots,X_{n-1}$ and denote
        \begin{align*}
            b=\left(\text{Cov}(X_{1},X_{n}),\dots,\text{Cov}(X_{n-1},X_{n})\right)^{\intercal}.
        \end{align*}
        Since $\Sigma$ is symmetric positive definite, we can write
        \begin{align*}
            a=\Sigma^{-1}b,
        \end{align*}
        and one can verify that 
        \begin{align*}
              \mathbb{E}[(X_{n}-\sum_{i=1 }^{n-1}a_{i}X_{i})X_{k}]=0, \forall 1\leq k\leq n-1.
        \end{align*}
        Consequently $X_{n}-\sum_{i=1 }^{n-1}a_{i}X_{i}$ is independent of any linear combination of $X_{1},\dots,X_{n-1}$ by \Cref{uncorrelated means independent}. Then by \Cref{independence of a bunch of random variables}, we have
        \begin{align*}
            \sigma(X_{n}-\sum_{i=1 }^{n-1}a_{i}X_{i})\independent \sigma(\sigma(X_{1})\cup\cdots\cup\sigma(X_{n-1})).
        \end{align*}
        Therefore,
        \begin{align*}
            \mathbb{E}[X_{n}-\sum_{i=1 }^{n-1}a_{i}X_{i}|X_{1},\dots,X_{n-1}]=\mathbb{E}[X_{n}-\sum_{i=1 }^{n-1}a_{i}X_{i}]=0,
        \end{align*}
        which implies 
        \begin{align*}
            \mathbb{E}[X_{n}|X_{1},\dots,X_{n-1}]=\sum_{i=1 }^{n-1}a_{i}X_{i}.
        \end{align*}
    \end{proof}
\end{lemma}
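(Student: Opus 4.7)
The plan is to reduce to the centered case, construct the $a_i$ explicitly by solving the normal equations for the covariance matrix of the conditioning variables, and then upgrade orthogonality to independence using the Gaussian facts already collected in Lemmas \ref{uncorrelated means independent} and \ref{independence of a bunch of random variables}. By the symmetric role of the indices, it suffices to prove the claim for $j=n$, and replacing each $X_i$ by $X_i - \mathbb{E}[X_i]$ reduces to the case $\mathbb{E}[X_i]=0$ for all $i$, which is what I would handle directly.

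First I would let $\Sigma = (\mathrm{Cov}(X_i, X_k))_{1 \le i,k \le n-1}$ be the covariance matrix of $X_1, \ldots, X_{n-1}$ and set $b = (\mathrm{Cov}(X_1, X_n), \ldots, \mathrm{Cov}(X_{n-1}, X_n))^{\intercal}$. Assuming $\Sigma$ is invertible (which is the statement's implicit setting; if $\Sigma$ is only positive semidefinite one can either reduce to a maximal linearly independent subset or use the Moore--Penrose pseudoinverse, but I would not dwell on this subtlety), define $a := \Sigma^{-1} b$. A direct calculation then shows $\mathrm{Cov}(X_n - \sum_{i=1}^{n-1} a_i X_i, X_k) = b_k - (\Sigma a)_k = 0$ for every $1 \le k \le n-1$, which gives the second displayed identity.

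Next I would bootstrap this orthogonality to independence. Since the tuple $(X_1, \ldots, X_n)$ is jointly Gaussian, so is the tuple $(X_n - \sum_{i=1}^{n-1} a_i X_i, \ X_k)$ for each $k$, so by \Cref{uncorrelated means independent} the residual $R := X_n - \sum_{i=1}^{n-1} a_i X_i$ is independent of every $X_k$, and in fact, again by joint Gaussianity, of every linear combination $\sum_k c_k X_k$. \Cref{independence of a bunch of random variables} then upgrades this to $\sigma(R) \independent \sigma(X_1, \ldots, X_{n-1})$, which is the third claim. For the first claim, I would use that conditional expectations respect additivity and that $R$ is independent of the conditioning $\sigma$-algebra, so $\mathbb{E}[R \mid X_1, \ldots, X_{n-1}] = \mathbb{E}[R] = 0$, whence
\[
    \mathbb{E}[X_n \mid X_1, \ldots, X_{n-1}] = \mathbb{E}\!\left[\sum_{i=1}^{n-1} a_i X_i \,\Big|\, X_1, \ldots, X_{n-1}\right] = \sum_{i=1}^{n-1} a_i X_i,
\]
and undoing the centering puts back the $\mathbb{E}[X_j]$ term.

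The only genuinely delicate step is the invertibility of $\Sigma$; in the singular case one must argue that the null space of $\Sigma$ corresponds exactly to linear combinations of the $X_i$ that are almost surely zero, so the vector $b$ lies in the range of $\Sigma$ and a solution $a$ still exists even though it is no longer unique. All other steps are routine applications of the lemmas already in hand: the key conceptual point is the passage from ``uncorrelated'' to ``independent'' for jointly Gaussian families, which is exactly what Lemmas \ref{uncorrelated means independent} and \ref{independence of a bunch of random variables} were set up to deliver.
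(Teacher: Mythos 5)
Your proposal is correct and follows essentially the same route as the paper's proof: centre the variables, take $j=n$, set $a=\Sigma^{-1}b$, verify orthogonality, and then invoke \Cref{uncorrelated means independent} and \Cref{independence of a bunch of random variables} to upgrade to independence of $\sigma$-algebras before computing the conditional expectation. Your added remark on handling a singular $\Sigma$ (that $b$ lies in the range of $\Sigma$ because null directions correspond to a.s.\ constant linear combinations) is a small but genuine improvement over the paper, which simply asserts positive definiteness.
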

\begin{lemma}
\label{lemma:Gaussiandeterminant}
    Let $\{X_{i}\}_{1\leq i\leq n}$ be a tuple of jointly centred Gaussian random variables, and $A_{n}\in \R^{n\times n}$ is the associated covariance matrix. We have
    \begin{align}
        \det(A_{n})=\text{Var}(X_{\pi(1)})\text{Var}(X_{\pi(2)}|X_{\pi(1)})\cdots\text{Var}(X_{\pi(n)}|X_{\pi(1)},\dots,X_{\pi(n-1)}),
    \end{align}
    where $\pi$ is any permutation of $\{1,\dots,n\}$.
    \begin{proof}
        It suffices to show 
        \begin{align}
         \label{property}
            \det(A_{n})=\text{Var}(X_{1})\text{Var}(X_{2}|X_{2})\cdots\text{Var}(X_{n}|X_{1},\dots,X_{n-1}),
        \end{align}
        as we can shuffle the conditional variances by relabeling $X_{i}$.
        We will proceed by induction, assuming that $n=1$, the covariance matrix is a scalar that is the variance of $X_{1}$. Thus the base case is satisfied. Suppose $\cref{property}$ holds for $n$, the covariance matrix $A_{n+1}$ of $X_{1},\dots,X_{n+1}$ has a form as follows:
        \begin{align*}
            A_{n+1}=
            \begin{bmatrix}
                A_{n} & b\\
                b^{\intercal}& \text{Var}(X_{n+1})
            \end{bmatrix}
            ,
        \end{align*}
        where 
        \begin{align*}
            b^{\intercal}=(\text{Cov}(X_{n+1},X_{1}),\dots,\text{Cov}(X_{n+1},X_{n})).
        \end{align*}
        By the formula of determinant of a block matrix, we have
        \begin{align*}
            \det{A_{n+1}}=\det(A_{n})\det(\text{Var}(X_{n+1})-b^{\intercal}A^{-1}_{n}b).
        \end{align*}
        By \Cref{Gaussian representation}, there exists $a=(a_{1},..,a_{n})^{\intercal}$ such that
        \begin{align}
        \label{independent of gaussian representation}
            \mathbb{E}[X_{n+1}|X_{1},\dots,X_{n}]&=\sum_{i=1}^{n}a_{i}X_{i},\nonumber\\
            \mathbb{E}[(X_{n+1}-\sum_{i=1}^{n}a_{i}X_{i})X_{k}]&=0 \quad \forall 1\leq k\leq n,\nonumber\\
            \sigma(X_{j}-\sum_{i\neq j}a_{i}X_{i}) &\independent \sigma(\sigma(X_{1})\cup\cdots\cup\sigma(X_{n}))
        \end{align}
        This implies
        \begin{align}
        \label{independent}
        \mathbb{E}[(X_{n+1}-\sum_{i=1}^{n}a_{i}X_{i})\sum_{j=1}^{n}a_{j}X_{j}]=0,
        \end{align}
        and 
        \begin{align*}
            b_{k}&=\text{Cov}(X_{n+1},X_{k})\\
            &=\mathbb{E}[X_{n+1}X_{k}]\\
            &=\sum_{i=1}^{n}a_{i}\mathbb{E}[X_{i}X_{k}] \implies b=A_{n}a.
        \end{align*}
        Since $A_{n}$ is symmetric positive definite, we have
        \begin{align}
        \label{Variance}
        \begin{split}
        b^{\intercal}A_{n}^{-1}b&=b^{\intercal}a\\
        &=\sum_{i=1}^{n}a_{i}\text{Cov}(X_{n+1},X_{i})\\
        &=\mathbb{E}[X_{n+1}\sum_{i=1}^{n}a_{i}X_{i}].
        \end{split}
        \end{align}
        As such, we have
        \begin{align}
            \text{Var}(X_{n+1}|X_{1},\dots,X_{n})&=\mathbb{E}[(X_{n+1}-\mathbb{E}[X_{n+1}|X_{1},\dots,X_{n}])^{2}|X_{1},\dots,X_{n}]\nonumber\\
            &=\mathbb{E}[(X_{n+1}-\sum_{i=1}^{n}a_{i}X_{i})^{2}|X_{1},\dots,X_{n}]\nonumber\\
            &=\mathbb{E}[(X_{n+1}-\sum_{i=1}^{n}a_{i}X_{i})^{2}]\nonumber\\
            &=\mathbb{E}[X_{n+1}^{2}-X_{n+1}\sum_{i=1}^{n}a_{i}X_{i}]\nonumber\\
            &=\text{Var}(X_{n+1})-b^{\intercal}A_{n}^{-1}b\nonumber,
        \end{align}
        where we use \cref{independent of gaussian representation} in the third equality, \cref{independent} in the fourth inequality, and \cref{Variance} in the last equality. Therefore, 
        \begin{align*}
             \det{A_{n+1}}&=\det(A_{n})\det(\text{Var}(X_{n+1})-b^{\intercal}A^{-1}_{n}b)\\
             &=\text{Var}(X_{1})\text{Var}(X_{2}|X_{2})\cdots\text{Var}(X_{n}|X_{1},\dots,X_{n-1})\text{Var}(X_{n+1}|X_{1},\dots,X_{n})
        \end{align*}
        as required.
    \end{proof}
\end{lemma}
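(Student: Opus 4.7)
The plan is to reduce to $\pi = \mathrm{id}$ and then proceed by induction on $n$. The permutation invariance is immediate: relabeling the $X_i$'s conjugates $A_n$ by a permutation matrix $P$, and $\det(P A_n P^{\intercal}) = \det(A_n)$ since $\det(P)^2 = 1$; the right-hand side is invariant in exactly the same way. Hence it suffices to prove
\[
    \det(A_n) = \mathrm{Var}(X_1)\,\mathrm{Var}(X_2 \mid X_1)\cdots \mathrm{Var}(X_n \mid X_1,\dots,X_{n-1}).
\]

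The base case $n=1$ is trivial. For the inductive step, I would write $A_{n+1}$ in block form
\[
    A_{n+1}=\begin{bmatrix} A_n & b \\ b^{\intercal} & \mathrm{Var}(X_{n+1}) \end{bmatrix},
    \qquad b_k = \mathrm{Cov}(X_{n+1},X_k),
\]
and apply the Schur complement formula for a block determinant to get $\det(A_{n+1}) = \det(A_n)\bigl(\mathrm{Var}(X_{n+1}) - b^{\intercal} A_n^{-1} b\bigr)$. The inductive hypothesis already gives $\det(A_n)$ as the product of the first $n$ conditional variances, so everything reduces to identifying the Schur complement with the single conditional variance $\mathrm{Var}(X_{n+1}\mid X_1,\dots,X_n)$.

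The main obstacle, and the one genuine computation, is this last identification. Here I would invoke \Cref{Gaussian representation}: there exists $a\in\mathbb{R}^n$ with $\mathbb{E}[X_{n+1}\mid X_1,\dots,X_n]=\sum_i a_i X_i$, and the residual $R:=X_{n+1}-\sum_i a_i X_i$ is orthogonal (hence independent) of $\sigma(X_1,\dots,X_n)$. Orthogonality forces $b = A_n a$, so $a = A_n^{-1} b$ (using that $A_n$ is positive definite; if it is merely positive semidefinite, restrict to the nondegenerate subspace first). Then
\[
    \mathrm{Var}(X_{n+1}\mid X_1,\dots,X_n) = \mathbb{E}[R^2] = \mathbb{E}[X_{n+1}^2] - \mathbb{E}\!\left[X_{n+1}\sum_i a_i X_i\right] = \mathrm{Var}(X_{n+1}) - b^{\intercal} a = \mathrm{Var}(X_{n+1}) - b^{\intercal} A_n^{-1} b,
\]
which is exactly the Schur complement. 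Combining this with the block determinant identity and the inductive hypothesis closes the induction.

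A conceptually cleaner alternative (which I would mention in a remark) is to perform the Gram--Schmidt / Cholesky decomposition directly: define $Y_i = X_i - \mathbb{E}[X_i\mid X_1,\dots,X_{i-1}]$, so that $(Y_1,\dots,Y_n)$ is obtained from $(X_1,\dots,X_n)$ by a unit-triangular linear map (determinant $1$) and consists of mutually independent centered Gaussians with $\mathrm{Var}(Y_i)=\mathrm{Var}(X_i\mid X_1,\dots,X_{i-1})$. The determinant of the diagonal covariance of $Y$ then equals $\det(A_n)$, giving the claim in one line. I expect the induction-plus-Schur-complement route to be preferred here because it integrates transparently with the machinery of \Cref{Gaussian representation} already established in the appendix.
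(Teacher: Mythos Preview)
Your proposal is correct and follows essentially the same route as the paper: reduce to the identity permutation, induct on $n$, write $A_{n+1}$ in block form, apply the Schur complement determinant identity, and then use \Cref{Gaussian representation} to identify the Schur complement $\mathrm{Var}(X_{n+1}) - b^{\intercal}A_n^{-1}b$ with the conditional variance $\mathrm{Var}(X_{n+1}\mid X_1,\dots,X_n)$ via $b=A_n a$. Your additional Gram--Schmidt/Cholesky remark is a nice alternative perspective not in the paper, but the core argument matches exactly.
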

\bibliographystyle{apalike}
\bibliography{citation}
\end{document}